\newcommand{\thm}[2][]{\def\firstargtemp{#1}%
    Theorem\ifx\firstargtemp\empty\else s~\ref{thm:#1} and\fi~\ref{thm:#2}}
\newcommand{\lem}[1]{Lemma~\ref{lem:#1}}
\newcommand{\rmk}[1]{Remark~\ref{rmk:#1}}
\newcommand{\corr}[1]{Corollary~\ref{corr:#1}}
\newcommand{\claim}[1]{Claim~\ref{#1}}
\newcommand{\cnj}[1]{Conjecture~\ref{cnj:#1}}
\newcommand{\tbl}[2][]{\def\firstargtemp{#1}%
    Table\ifx\firstargtemp\empty\else s~\ref{tbl:#1} and\fi~\ref{tbl:#2}}
\def\etal{et~al.\ }
\newcommand\fH[1]{\sbox0{#1}\dimen0=\ht0 \advance\dimen0 -1ex
  \sbox2{\'{}}\sbox2{\raise\dimen0\box2}%
  {\ooalign{\hidewidth\kern.1em\copy2\kern-.5\wd2\box2\hidewidth\cr\box0\crcr}}}
\renewcommand\labelenumi{(\roman{enumi})}
\renewcommand\theenumi\labelenumi
\newtheorem{lemma}{Lemma}[section]
\newtheorem{remark}[lemma]{Remark}
\newtheorem{theorem}[lemma]{Theorem}
\newtheorem{corollary}[lemma]{Corollary}
\newtheorem{conjecture}[lemma]{Conjecture}
\newtheorem*{definition}{Definition}
\newtheorem{clam}{Claim}[lemma]
\DeclareMathOperator{\Sym}{Sym}
\DeclareMathOperator{\Aut}{Aut}
\DeclareMathOperator{\vspan}{span}
\newcommand\sym[1]{{\rm S}_{#1}}
\newcommand\alt[1]{{\rm A}_{#1}}
\newcommand\dih[1]{{\rm D}_{#1}}
\newcommand\MT[1]{{\rm M}_{#1}}
\newcommand\cyc[1]{{\rm C}_{#1}}
\newcommand\suz[1]{{}^2 {\rm B}_2 (#1)}
\newcommand\ree[1]{{}^2 {\rm G}_2 (#1)}
\newcommand\psl[2]{{\rm PSL}_{#1} (#2)}
\newcommand\pgl[2]{{\rm PGL}_{#1} (#2)}
\newcommand\lins[2]{{\rm SL}_{#1} (#2)}
\newcommand\ling[2]{{\rm GL}_{#1} (#2)}
\newcommand\psigmal[2]{{\rm P\Sigma L}_{#1} (#2)}
\newcommand\pgammal[2]{{\rm P\Gamma L}_{#1} (#2)}
\newcommand\gammal[2]{{\rm \Gamma L}_{#1} (#2)}
\newcommand\psu[2]{{\rm PSU}_{#1} (#2)}
\newcommand\psigmau[2]{{\rm P\Sigma U}_{#1} (#2)}
\newcommand\symp[2]{{\rm Sp}_{#1} (#2)}
\newcommand\xlie[3]{{\rm #1}_{#2} (#3)}
\newcommand\vsp[2]{V_{#1} (#2)}
\newcommand\ind[1]{{\rm I}_{#1}}
\newcommand{\saxg}[1][G]{\Sigma (#1)}
\newcommand{\saxh}[1][G]{\mathcal{H} (#1)}
\def\GosNum{Gossip Number}
\NewDocumentCommand{\gosnum}{ O{} O{th} }{\def\firstargtemp{#1}%
    \ifx\firstargtemp\empty\else $#1^{\rm #2}$ \fi~gossip number}
\newcommand\komp[1]{{\mathcal K} (#1)}
\author{Melissa Lee \and Anthony Pisani}
\address{School of Mathematics, Monash University, Clayton VIC 3800, Australia}
\email{\{melissa.lee, anthony.pisani\}@monash.edu}
\date{\today}
\title{The Saxl hypergraph of a permutation group}
\begin{document}
    \maketitle

    \begin{abstract}
        Given a permutation group $G \le \Sym{(\Omega)}$, a subset $B$
        of $\Omega$ is said to be a base if its pointwise stabiliser
        in $G$ is trivial, and the base size $b(G)$ is the minimum size
        of a base. In the notable case $b(G) = 2$, Burness and Giudici define
        the Saxl graph of $G$ to be the graph on $\Omega$ with bases of size 2
        as edges. Later work of Freedman \etal extends this notion
        to any group for which $b(G) \ge 2$, taking the pairs of points
        contained in bases of size $b(G)$ for edges. We study
        an alternative generalisation, the Saxl hypergraph,
        where bases of size $b(G)$ are themselves the edges. In particular,
        we consider groups with complete Saxl hypergraphs, primitive groups
        whose Saxl hypergraphs have flag-spanning tours, and
        appropriate generalisations of Burness and
        Giudici's Common Neighbour Conjecture.
    \end{abstract}

    \section{\label{sec:intro}Introduction}

    Recall that, given a permutation group $G \le \Sym{(\Omega)}$, a subset $B$
    of $\Omega$ is said to be a \textsl{base} for $G$ if its pointwise stabiliser
    therein is trivial. The \textsl{base size} $b(G)$ of $G$ is the minimum size
    of a base. These concepts have a lengthy history dating back
    to the $19^{\rm th}$ century, yet remain a vibrant area of
    contemporary mathematical research with broad relevance in algebra and
    combinatorics; we refer the reader to the survey articles
    \cite{prodbase,liesha} and \cite[\S5]{fpr_survey}
    for a more detailed overview, and to \cite{akperm,gls} as examples of
    applications. \\

    Motivated by the use of bases in computational group theory, it is of particular interest to identify finite groups with small base sizes.
    Although only weak upper bounds on base sizes are possible in general ---
    Pyber \cite{pyber_asym} shows that almost all groups of degree $n$ have base size
    at least $cn$ for some fixed $c > 0$ --- recent work has revealed
    that many families of important groups are much better-behaved,
    especially primitive groups. For instance,
    significant conjectures made by Babai, Cameron, Kantor, and Pyber
    in the 1990s on the asymptotic properties of $b(G)$ in various scenarios
    have now been proved with explicit, strong upper limits. These include
    the surprising theorem of Burness \etal\cite{basecls, basesym, basespor, basesim} that base sizes of
    almost simple primitive groups are bounded above (barring
    those in \textsl{standard} actions), not just by a fixed constant as Cameron and Kantor
    \cite{camkant} conjectured, but by the very manageable limit of 7; in fact,
    with the sole exception of $\MT{24}$ acting on 24 points, by 6. \\

    At a near-extreme, a notable class of small-base groups
    are the finite primitive groups of base size 2. (The case $b(G) = 1$
    corresponds to prime cyclic groups.) A classification of such groups,
    still incomplete today, forms the aim of a research project initiated
    by Jan Saxl in the 1990s. Further recent interest stems from a graph defined
    for such groups by Burness and Giudici \cite{burngiu}:

    \begin{definition}
        The {\rm Saxl graph} $\saxg$ of a group $G\leq \mathrm{Sym}(\Omega)$ is
        the graph on vertex set $\Omega$ with edge set
        $\{ \alpha \beta \, : \, \{ \alpha, \beta \} \textrm{ is a base for } G \}$.
    \end{definition}

    Although a number of properties of $\saxg$ are considered
    in this initial paper and occasionally in the later literature
    \cite{chen+huang,saxlsol,prodsoltab}, subsequent attention
    has clustered around a striking conjecture {\cite[p.2]{burngiu}} they put forth concerning
    the graph's diameter.

    \begin{conjecture}[Common Neighbour Conjecture (CNC)]
        \label{cnj:cnc}
        Let $G\leq \mathrm{Sym}(\Omega)$ be a primitive permutation group with $b(G) = 2$.
        Any two vertices of $\saxg$ share a common neighbour.
        As an immediate corollary, ${\rm diam} (\saxg) \le 2$.
    \end{conjecture}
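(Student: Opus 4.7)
The plan is to attack the conjecture via an O'Nan--Scott reduction, treating each socle type (almost simple, product action, diagonal, affine, twisted wreath) separately and leaning throughout on the fixed-point-ratio technique pioneered by Liebeck and Shalev. Fix $\alpha \in \Omega$ and set $H = G_\alpha$. A point $\gamma \in \Omega$ lies in the open neighbourhood $N(\alpha)$ of $\alpha$ in $\saxg$ precisely when no non-identity element of $H$ fixes $\gamma$; equivalently, since every non-trivial finite group contains an element of prime order, when $\gamma \notin \mathrm{fix}(x)$ for every prime-order $x \in H$. A union bound gives
\[
\frac{|\Omega \setminus N(\alpha)|}{|\Omega|} \;\leq\; \sum_{\substack{x \in H \\ |x| \text{ prime}}} \mathrm{fpr}(x) \;=:\; Q(H, G),
\]
where $\mathrm{fpr}(x) = |\mathrm{fix}(x)|/|\Omega|$. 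Since $G$ is transitive, $Q(H,G)$ is the same for every point stabiliser; hence whenever $Q(H,G) < 1/2$ we obtain $|N(\alpha)| + |N(\beta)| > |\Omega|$ for any two points, forcing a common neighbour.

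The bulk of the work is then to establish $Q(H,G) < 1/2$ case by case. For almost simple $G$ in a non-standard action, the deep fixed-point-ratio bounds developed by Burness \etal in the course of proving $b(G) \leq 7$ are typically strong enough: one regroups the sum over $G$-conjugacy classes of prime-order elements and controls $|x^G \cap H|$ by the subgroup structure of $H$. For affine groups $G = V \rtimes G_0$ with $G_0 \leq \ling{d}{p}$, any non-identity $x \in G_0$ fixes $v \in V$ iff $v \in \ker(x-1)$, giving $\mathrm{fpr}(x) = p^{-c(x)}$ where $c(x) = \dim(\im(x-1))$; the resulting sum is then tackled via an Aschbacher-class analysis of $G_0$. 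The product action, diagonal, and twisted wreath types should reduce, by standard arguments, to statements about the underlying almost simple factors, though combinatorial care is needed to assemble a common neighbour from data in each coordinate.

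The main obstacle is the range of exceptional cases where $Q(H,G)$ is uncomfortably close to, or even exceeds, $1/2$: small-rank classical groups, certain sporadic actions, affine groups of small degree, and product actions in which a single factor already has large fixed-point ratios. For these the union bound is too crude and must be replaced by a structural argument, such as showing that some $G$-orbit on ordered pairs $(\gamma, \delta) \in N(\alpha) \times N(\beta)$ is non-empty, or exploiting the maximality of $H$ together with a suitable explicit element of $G$. A finite residue of small-degree examples can be settled computationally in GAP or Magma; uniformly handling the intermediate regime, where neither the fixed-point-ratio bound nor direct computation suffices, will be the decisive step.
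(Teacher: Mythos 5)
The statement you are addressing is a \emph{conjecture}: the paper does not prove it, and it remains open in general. Burness and Giudici posed it, and the paper only surveys partial verifications (degree at most $4095$, certain diagonal and twisted wreath type groups, soluble point stabilisers, socle $\psl{2}{q}$, some affine cases, and so on). So there is no ``paper's own proof'' to compare against, and your proposal should be judged on whether it actually establishes the statement. It does not.

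Your strategy --- bounding $|\Omega \setminus N(\alpha)|/|\Omega|$ by the sum $Q(H,G)$ of fixed point ratios over prime-order elements of $H$, and concluding a common neighbour whenever $Q(H,G) < 1/2$ --- is essentially the method Burness and Giudici themselves use to verify the conjecture in the cases they handle, so the technique is sound as far as it goes. The genuine gap is that $Q(H,G) < 1/2$ simply fails for many primitive groups of base size $2$: the existence of a base of size $2$ only guarantees that a related sum is less than $1$ (so that a regular suborbit exists), and there are large families --- notably standard actions of classical groups, many subspace and low-rank cases, and various product-type examples --- where $Q(H,G)$ is at or above $1/2$ and no union bound of this shape can close the argument. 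Your own final paragraph concedes that the ``intermediate regime'' where neither the fixed-point-ratio bound nor computation applies is the decisive step, but offers no method for it; that unresolved regime is precisely where the conjecture is hard and why it is still open. As written, the proposal is a research programme reproducing the known partial results, not a proof of the conjecture.
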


    Burness and Giudici themselves verify the preceding claim when
    $| \Omega | \le 4095$, $G$ is a sufficiently large group of
    twisted wreath or diagonal type in the O'Nan--Scott classification
    of primitive groups (see \S\ref{onan+scott}), or $G$ is one of a number
    of almost simple permutation groups with sporadic simple or
    alternating socle. Extending work of Chen and Du \cite{chen+du},
    Burness and Huang \cite{saxlsol} offer a proof in the cases
    where $G$ is almost simple simple with socle $\psl{2}{q}$ or has
    a soluble point stabiliser. In further work \cite{prodsoltab}, they
    also check the conjecture for groups $L \wr P$ of product type. Meanwhile,
Lee and Popiel \cite{lee+pop} confirm
    the conjecture for most affine groups with sporadic point stabilisers. \\

    More recently, efforts have been undertaken to develop
    an interesting analogue of the Saxl graph for groups of arbitrary base size. 
    In this spirit, Freedman \etal\cite{fhlk} introduce and study
    the following graph:

    \begin{definition}
        The {\rm generalised Saxl graph} $\saxg$ of a group $G\leq\mathrm{Sym}(\Omega)$ is
        the graph on vertex set $\Omega$ with edges between vertices $\alpha,\beta \in \Omega$ if and only if $\alpha,\beta$ lie in a base for $G$ of size $b(G)$.
        % edge set
        % $\{ \alpha \beta \, : \, \{ \alpha, \beta \} \textrm{ is a subset
        %     of a base for } G \textrm{ with cardinality } b(G) \}$.
    \end{definition}
    \begin{remark}
        Note that the generalised Saxl graph and Saxl graph coincide
        if $b(G) = 2$. Furthermore, the graphs have no edges when $b(G) < 2$ or
        $b(G) > 2$, respectively, and so are never discussed in these contexts,
        obviating any potential confusion from overloading the symbol $\saxg$.
    \end{remark}

    Their analysis, among other things, suggests that \cnj{cnc} continues
    to hold in its natural generalisation to generalised Saxl graphs
    of groups for which $b(G) \ge 2$. \\

    This paper seeks to investigate an alternative generalisation of
    Saxl graphs. Recall that a \textsl{hypergraph} may be defined as a pair
    $(V, E)$, where $V$ is the set of vertices and $E \subseteq \mathcal{P} (V)$
    the set of arbitrarily-sized \textsl{(hyper)edges}.

    \begin{definition}
        The {\rm Saxl hypergraph} $\saxh$ of a group $G \le \Sym{(\Omega)}$ is
        the hypergraph on vertex set $\Omega$ with edge set consisting of all bases for $G$ of size $b(G)$.
        % $\{ B \subseteq \Omega \, : \, | B | = b(G) \textrm{ and } B
        %     \textrm{ is a base for } G \}$.
    \end{definition}

    Like the generalised Saxl graph, the Saxl hypergraph and Saxl graph
    are clearly identical for groups of base size 2. The succeeding text
    is devoted to the exploration of a selection of properties of
    Saxl hypergraphs, as outlined in the remainder of this section.

    \subsection{Complete Saxl hypergraphs}

    Burness and Giudici note that the transitive groups of base size 2
    with complete Saxl graphs are, by definition, the Frobenius groups.
    Although Freedman \etal observe that the situation for larger base sizes
    is complicated by the absence of a similar title and well-developed theory,
    the condition that $\saxh$ be complete (i.e.\ any $b(G)$ elements form a base)
    nonetheless proves sufficiently strong for us to prove a similar classification:

    \begin{theorem}
        \label{thm:complete}
        Let $G \le \Sym{(\Omega)}$ be a group of base size at least $2$.
        Its Saxl hypergraph is complete
        if and only if one of the following applies:
        \begin{enumerate}
            \item $G$ is Frobenius; \label{complete:frob}
            \item $G$ is an alternating or symmetric group
                in its natural action; \label{complete:alt}
            \item $G \in \{ \psl{2}{q}, \pgl{2}{q},
                \psl{2}{q^2}.\cyc{2} \not\le \psigmal{2}{q^2} \}$
                acts on the projective line; \label{complete:lin}
            \item $G$ is a Suzuki group $\suz{q}$ in its doubly transitive action;
                or \label{complete:lie_exc}
            \item $(G, |\Omega|) \in \{(\psl{2}{11}, 12), (\pgl{2}{11}, 12),
                (\MT{11}, 11), (\MT{12}, 12) \}$. \label{complete:spor}
        \end{enumerate}
    \end{theorem}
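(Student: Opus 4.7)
Both directions pivot on the following elementary reformulation, which follows immediately from the definitions of base and of the Saxl hypergraph: $\saxh(G)$ is complete if and only if for every $(b(G){-}1)$-subset $S \subseteq \Omega$, the pointwise stabiliser $G_S$ acts semi-regularly on $\Omega \setminus S$, i.e.\ fixes no point outside $S$. This reformulation is the workhorse of both directions.

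\emph{Forward direction.} I would verify the criterion case by case for the five families. Frobenius groups have $b(G) = 2$ and by definition satisfy $G_\alpha \cap G_\beta = 1$ for $\alpha \ne \beta$, giving (i). For $\sym{n}$ and $\alt{n}$ in their natural degree-$n$ actions, one has $b = n-1$ and $b = n-2$ respectively, and the pointwise stabiliser of any $(b-1)$-subset is generated at most by the transposition of the two uncovered points, which lies in $\sym{n}$ but not in $\alt{n}$; this handles (ii). For (iii), $\pgl{2}{q}$ is sharply $3$-transitive on the projective line so completeness with $b = 3$ is immediate, and for $\psl{2}{q}$ and $\psl{2}{q^2}.\cyc{2} \not\le \psigmal{2}{q^2}$ one computes the two-point stabiliser explicitly (cyclic of order $(q-1)/\gcd(2,q-1)$ in the $\psl{2}{q}$ case) and observes that it acts with regular orbits on the complement. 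The Suzuki groups $\suz{q}$ are Zassenhaus groups on $q^2 + 1$ points, yielding (iv) with $b = 3$. The sporadic cases (v) reduce to the sharp $4$- and $5$-transitivity of $\MT{11}$ and $\MT{12}$ on their natural degrees, and to direct verification in the exceptional degree-12 actions of $\psl{2}{11}$ and $\pgl{2}{11}$.

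\emph{Reverse direction.} Assume $\saxh(G)$ is complete. If $b(G) = 2$, the criterion reads $G_\alpha \cap G_\beta = 1$ for all distinct $\alpha, \beta$, with $G_\alpha \ne 1$ (else $b(G) = 1$); when $G$ is transitive this is the Frobenius condition, while a short separate argument absorbs the intransitive case into (i) (noting that the listed Frobenius case is the only one intersecting intransitivity). If $b(G) \ge 3$, I would first establish that the semi-regularity of $G_S$ for every $(b-1)$-subset $S$, combined with the minimality of $b$ (so $G_T \ne 1$ for every $(b-2)$-subset $T$), forces $G$ to be at least $2$-transitive on $\Omega$. With $2$-transitivity in hand one invokes the CFSG classification of finite $2$-transitive groups and checks each class against the criterion, using the sharp base-size bounds of Burness et al.~\cite{basecls, basesym, basespor, basesim} to discard all but the candidates in (ii)--(v); in particular, the affine $2$-transitive groups return only the sharply $2$-transitive examples, which are already captured by (i).

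\emph{Main obstacle.} The hardest step is the deduction of $2$-transitivity from the semi-regularity hypothesis when $b(G) \ge 3$. A direct argument would fix an arbitrary $(b-2)$-subset $T$, use that $G_T$ is nontrivial, and exploit semi-regularity of $G_{T \cup \{\alpha\}}$ for every $\alpha \in \Omega \setminus T$ to force the $G_T$-orbits on $\Omega \setminus T$ into a highly rigid pattern; a double-counting or orbit-counting argument should then preclude the existence of multiple orbits of $G$ on ordered pairs. Should this prove recalcitrant, an alternative is to bypass transitivity reductions entirely, using the universal bound $b(G) \le 7$ in non-standard almost simple primitive actions together with an O'Nan--Scott reduction to get a finite explicit list of primitive candidates, and dispatch imprimitive cases by a block-system argument that yields a $(b-1)$-subset with a nontrivial pointwise fixator outside it, contradicting the criterion.
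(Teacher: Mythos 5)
Your overall strategy coincides with the paper's: reformulate completeness as semiregularity of $(b(G)-1)$-point stabilisers on the remaining points, deduce multiple transitivity, and then sweep through the classification of finite $2$-transitive groups family by family. The forward direction is fine in outline (your Zassenhaus-group observation for $\suz{q}$ is in fact slicker than the paper's explicit computation with the maps $n_\kappa$). The genuine gap is exactly the step you flag as the ``main obstacle,'' and neither of your proposed escapes closes it. The paper's mechanism is an induction through single-point stabilisers: writing $\komp{n}$ for ``$b(G)=n$ and every $n$-set is a base,'' \lem{complete} shows that $G$ is $\komp{n}$ iff every $G_\alpha$ is $\komp{n-1}$ on $\Omega\setminus\{\alpha\}$, and that $\komp{2}$ is precisely the Frobenius condition --- where the transitivity of a $\komp{2}$ group itself requires an argument via the Frobenius structure theorem, since the theorem assumes neither transitivity nor primitivity. \corr{complete} then bootstraps this to $(n-1)$-transitivity of any $\komp{n}$ group. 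Your ``double-counting or orbit-counting argument'' is not a proof; the working version of your idea is to observe that for each $(b-2)$-subset $T$ the group $G_T$ acting faithfully on $\Omega\setminus T$ satisfies the $\komp{2}$ condition and is therefore Frobenius, hence transitive --- which is the paper's induction in disguise. Note also that the paper exploits the full $(b-1)$-transitivity, not merely $2$-transitivity: it is what eliminates $\psl{d}{q}$ for $d\ge 3$ (a two-subspace stabiliser is intransitive on the remaining points, so $\komp{n}$ fails for $n\ge 4$) and what reduces the sporadic check to ``sharply $n$-transitive, or $(n-1)$-transitive with semiregular $(n-1)$-point stabiliser.'' Your fallback via O'Nan--Scott and the bound $b(G)\le 7$ fails outright: the bound excludes standard actions, and $\sym{n}$ and $\alt{n}$ in their natural actions --- which appear in the answer --- have unbounded base size.

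A further inaccuracy: the affine $2$-transitive groups do not ``return only the sharply $2$-transitive examples.'' The linear-combination argument only rules out $\komp{n}$ for $n>2$ when $q\neq 2$; for $q=2$ it rules out $n>3$, and a separate analysis of groups with socle $\vsp{d}{2}$ and base size $3$ is required. This produces $\vsp{2}{2}{:}\ling{2}{2}\cong\sym{4}$ (absorbed into case~(ii)) together with a short list of candidates with $G_0\in\{\sym{6},\alt{6}{:}\cyc{2},\alt{7},\psu{3}{3},\ldots\}$ for $d\in\{4,6\}$ that the paper must eliminate computationally. Your sketch as written would silently skip these cases.
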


    The above result does not assume transitivity of $G$, and thus even
    represents a slight strengthening of the claim for Saxl graphs. We further
    note that completeness of $\saxh$ is a sufficient but unnecessary condition
    for $\saxg$ to be complete. The problem of determining which groups have
    this latter property, however, is not yet fully solved \cite{fhlk}.

    \subsection{\label{sec:goss_int}Common Neighbour Conjecture}

    As it turns out, the obvious generalisation of \cnj{cnc} to Saxl hypergraphs
    is equivalent to its generalisation to generalised Saxl graphs. There is
    in fact evidence for its equivalence to a still more powerful hypothesis.

    \begin{conjecture}
    \label{cnj:conj_1.4}
        Given a finite primitive group $G \le \Sym{(\Omega)}$ not
        containing ${\rm Alt} (\Omega)$, every pair of points
        $\alpha \neq \beta \in \Omega$ belong to some pair
        of edges $E_\alpha, E_\beta$ in $\saxh$ such that
        $|E_\alpha \cap E_\beta| = 1$.
    \end{conjecture}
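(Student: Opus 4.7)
The plan is to mirror the case-by-case structure used in existing work on the Common Neighbour Conjecture, leveraging the O'Nan-Scott classification of primitive groups. Fix distinct $\alpha, \beta \in \Omega$ and set $b = b(G)$. The target is a pair of bases $E_\alpha \ni \alpha$ and $E_\beta \ni \beta$, each of size $b$, with $|E_\alpha \cap E_\beta| = 1$. Splitting on whether the shared point lies in $\{\alpha, \beta\}$, we reduce to two sub-problems: (A) produce a base containing both $\alpha$ and $\beta$, together with a second base through $\beta$ (say) that meets the first only in $\alpha$; or (B) locate a ``hypergraph common neighbour'' $\gamma \notin \{\alpha, \beta\}$ lying in a size-$b$ base with $\alpha$ and in another with $\beta$, then arrange the two bases to overlap only in $\gamma$.

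The technical engine should be the Liebeck--Shalev fixed-point-ratio method: for a prescribed initial pair $\{\alpha, \gamma\}$, the probability that a uniformly random extension to a $b$-set is a base for $G$ is bounded below by $1 - \sum |x^G| \cdot \mathrm{fpr}(x)^{b-2}$, where the sum ranges over the prime-order elements $x \in G$. One may further require the extensions to avoid prescribed points (say, the elements of $E_\alpha \setminus \{\gamma\}$) at the cost of a small correction in the bound. In each O'Nan-Scott type, I would sample $E_\alpha$ through $\alpha$ and $\gamma$, then show that an ``almost disjoint'' $E_\beta$ through $\beta$ and $\gamma$ exists with positive probability. For the types where generalised CNC is already established --- almost simple with socle $\psl{2}{q}$ or a soluble point stabiliser \cite{saxlsol}, product type \cite{prodsoltab}, many affine groups \cite{lee+pop}, and the types treated in \cite{burngiu} --- the first task would be to check whether the existing arguments can be retrofitted to guarantee exact intersection size one, rather than merely some overlap.

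For $b = 2$, \cnj{conj_1.4} follows immediately from \cnj{cnc}: given a common neighbour $\gamma$ of $\alpha, \beta$, the bases $E_\alpha = \{\alpha, \gamma\}$ and $E_\beta = \{\beta, \gamma\}$ meet in exactly $\{\gamma\}$. The substantive content therefore lies in $b \ge 3$, where two distinct bases have room to overlap in many points; this is where I expect the main obstacle. Adapting the fpr arguments to \emph{exact} intersection control --- rather than merely the existence of extensions --- is subtle, and one can anticipate genuine difficulty for almost simple groups of Lie type of small rank, as well as for affine groups with small point stabilisers, where the supply of bases through a prescribed pair of points is scarce and the probabilistic bounds correspondingly tight. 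In these cases direct combinatorial arguments, possibly computer-assisted for the smallest parameters, may be required to close the proof.
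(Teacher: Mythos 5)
The statement you are addressing is stated in the paper as a conjecture, and the paper does not prove it: what it establishes (\thm{cnc-edge-disj-6}) is the \emph{conditional} result that \cnj{conj_1.4} holds for $G$ \emph{provided} the generalised CNC (\cnj{cnc-exp}) already holds for $G$, and then only under additional hypotheses, the most important being $b(G)\le 7$ (which subsumes the affine, almost simple non-standard, and most diagonal and product cases). Your proposal is likewise a plan rather than a proof, and it is honest about where it would get stuck; but the sticking point you identify is exactly where the paper's argument contains its one genuinely new idea, and that idea is absent from your sketch. The fixed-point-ratio method gives lower bounds on the number of bases through a prescribed tuple, but it has no mechanism for forcing the intersection of two bases down to \emph{exactly} one point: a union bound showing that a random completion of $\{\beta,\gamma\}$ to a base avoids every point of $E_\alpha\setminus\{\gamma\}$ only succeeds when the fixed-point ratios are small relative to $b(G)$, and precisely in the small-rank and small-stabiliser cases you flag the estimates give nothing. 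Your decomposition into cases (A) and (B) therefore reduces the conjecture to ``retrofit the existing CNC arguments to control intersections exactly'' without supplying the tool that does the retrofitting. (Your $b=2$ observation is correct but is just a restatement of the CNC, which is itself open.)

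The paper's tool is different in kind and worth recording. First, \thm{cnc-edge-dist} shows one may always replace $E_\alpha,E_\beta$ by edges with $\alpha,\beta$ outside the intersection and with intersection no larger, by conjugating with an element of the pointwise stabiliser of $E_\beta\setminus\{\alpha\}$ that moves $\alpha$. Then, assuming for contradiction that the minimal achievable intersection size is some $m\ge 2$, minimality forces every point $\delta$ for which $E_\beta\setminus\{\gamma\}\cup\{\delta\}$ is again a base --- equivalently, every point in a regular orbit of $G_{(E_\beta\setminus\{\gamma\})}$ --- to lie inside $E_\alpha$, since otherwise swapping $\gamma$ for $\delta$ would shrink the intersection. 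Feeding this rigidity into \lem{ced6-pre}, a commutator computation that manufactures $3$-cycles from pairs of pointwise stabilisers, produces a subgroup of $G$ inducing the full alternating group on a set $\Delta$ of at least $3$ points while fixing a base pointwise elsewhere; $\Delta$ is then shown to be a block, so primitivity forces ${\rm Alt}(\Omega)\le G$, which the conjecture excludes. This descent-plus-commutator argument is purely group-theoretic, makes no use of fixed-point ratios, and is what converts ``nonempty intersection'' (the CNC hypothesis) into ``intersection of size one'' for $b(G)\le 7$. Without an analogue of this step, your probabilistic route does not close any case with $b(G)\ge 3$.
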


    In \thm{cnc-edge-disj-6}, we prove this holds for primitive groups
    under a broad range of assumptions if \cnj{cnc} likewise holds
    for their generalised Saxl graphs. Crucially, for example, we show that \cnj{conj_1.4} holds under this assumption for all groups with base size at most 7. \\

    Some further results come from the exploration of the following
    apparently unnamed concept.

    \begin{definition}
        The {\rm \gosnum[n]} $g_n$ of a graph or hypergraph
        $\Gamma = (V, E)$ is the least number of common neighbours possessed
        by $n$ vertices of $\Gamma$; that is,
        \[ g_n = \min_{S \in V^k}{\left|
            \bigcap_{v \in S} N_{\Gamma} (v)
        \right|} \]
        if $1 \le n \le |V|$, and $0$ otherwise.
    \end{definition}
    % \begin{remark}
    %     For $n = 2$ and $\Gamma$ a graph, $g_2$ is
    %     also the \textsl{minimum codegree} of $\Gamma$.
    % \end{remark}

    As explained in \S\ref{sec:goss}, this provides another potential direction
    in which to extend \cnj{cnc}. The result below offers
    an initial development along these lines.

    \begin{theorem}
        \label{thm:goss}
        Let $G \le \Sym{(\Omega)}$ be a finite primitive permutation group. If
        $b(G) > 2$ and the CNC holds, then $g_2 \ge 2$. On the other hand,
        there exist groups of arbitrary base size $b(G) \ge 2$ such that
        $g_3 = 0$.
    \end{theorem}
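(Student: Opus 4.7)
The proof splits naturally into the two claims. For the first --- that the (generalised) CNC for $\saxh$ together with $b(G) > 2$ forces $g_2 \ge 2$ --- the plan is to take any pair $\alpha \ne \beta \in \Omega$, use the CNC to produce one common neighbour $\gamma$, and then construct a second via an orbit-swap argument that exploits $b(G) \ge 3$. For the second claim, I will exhibit, for every $b \ge 2$, a group of base size $b$ whose Saxl hypergraph has an isolated vertex, forcing $g_3 = 0$ automatically. The main obstacle lies in the first claim: arranging matters so that the second common neighbour is genuinely distinct from $\gamma$.

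In more detail, I first invoke the CNC to obtain $\gamma \in N(\alpha) \cap N(\beta)$ witnessed by edges $E_1 \ni \alpha, \gamma$ and $E_2 \ni \beta, \gamma$ of $\saxh$, each of size $b := b(G) \ge 3$. If $\alpha$ and $\beta$ already share an edge $E$, then either $b \ge 4$ and the $b - 2 \ge 2$ vertices in $E \setminus \{\alpha, \beta\}$ supply two common neighbours outright, or $b = 3$ and $G_{\alpha \beta} \ne 1$ (since $\{\alpha, \beta\}$ is too small to be a base); any non-identity $h \in G_{\alpha \beta}$ must move the third vertex of $E$ because $G_E = 1$, so $h(E)$ delivers a second common neighbour. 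Otherwise $\beta \not\in E_1$, and I set $S = (E_1 \setminus \{\gamma\}) \cup \{\beta\}$, a $b$-set. If $S$ is itself a base then it is an edge of $\saxh$, and its $b - 2 \ge 1$ vertices outside $\{\alpha, \beta\}$ are common neighbours of $\alpha, \beta$ distinct from $\gamma$. If $S$ is not a base then $G_S \ne 1$; any non-identity $h \in G_S$ cannot also fix $\gamma$, for $G_{S \cup \{\gamma\}} \le G_{E_1} = 1$. Thus $h(\gamma) \ne \gamma$, and the $G$-translates $h(E_1)$ and $h(E_2)$ are edges of $\saxh$ witnessing $h(\gamma) \in N(\alpha) \cap N(\beta)$.

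For the second claim, take any $H \le \Sym(\Omega_0)$ of base size $b$ --- for instance $H = \sym{b + 1}$ in its natural action on a $(b+1)$-set $\Omega_0$ --- and extend $H$ to act on $\Omega = \Omega_0 \sqcup \{\infty\}$ by fixing the new point $\infty$. Since adjoining the fixed point does not alter the pointwise stabiliser of any subset, $b(H)$ remains $b$ in the extended action and no base of minimum size can contain $\infty$. Hence $\infty$ is isolated in $\saxh[H]$, $N(\infty) = \emptyset$, and any triple of vertices including $\infty$ has empty common-neighbourhood intersection, giving $g_3 = 0$ for a group of every base size $b \ge 2$.
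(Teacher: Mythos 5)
Your argument for the first claim is correct and is essentially the paper's (Theorem~\ref{thm:g2}): in both treatments one takes the common neighbour $\gamma$ supplied by the CNC and manufactures a second one either by exhibiting a new base through $\alpha$ and $\beta$ directly, or by finding a non-trivial element of $G_{\alpha,\beta}$ (in your case an element of the pointwise stabiliser of $S=(E_1\setminus\{\gamma\})\cup\{\beta\}$) that moves $\gamma$ and hence translates the witnessing edges. The case analysis is organised slightly differently but the content is the same, and each step checks out.

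The second claim is where there is a genuine gap. The theorem opens with ``Let $G \le \Sym{(\Omega)}$ be a finite \emph{primitive} permutation group,'' and the whole point of the second assertion --- as the surrounding discussion makes explicit --- is that the choice of \emph{two} vertices in the generalised CNC is optimal, i.e.\ that $g_3$ can vanish even in the primitive setting where $g_2>0$ is conjectured (and, by the first part, $g_2\ge 2$). Your construction adjoins a global fixed point $\infty$ to $\sym{b+1}$, producing a group that is intransitive, hence not primitive, and whose Saxl hypergraph has an isolated vertex, so that already $g_1=g_2=0$. This trivialises the statement rather than proving it: it says nothing about primitive groups and does not separate $g_2$ from $g_3$. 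The paper instead exhibits, for each odd prime power $q$ and each $k\ge 1$, the primitive affine group $G=\vsp{n-1}{q}{:}\sym{n}$ with $n=q^k-1$ acting on the deleted permutation module (Theorem~\ref{thm:gos-aff}); it computes $b(G)=k+1$ by analysing which multisets of coordinate values can occur in a minimal base for the point stabiliser $\sym{n}$, and then shows that the three specific points $v=(1,-1,0,\dots,0)$, $-v$, and $0$ admit no common neighbour, because a common neighbour $w$ would force each of $v-w$, $-v-w$, $-w$ to have every field element occurring equally often as a coordinate, which is arithmetically impossible. Some construction of this kind --- a genuinely primitive family of every base size with three carefully chosen mutually obstructing points --- is the missing idea; the fixed-point trick cannot substitute for it.
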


    \subsection{Valency and Flag-Spanning Tours}

    A \textsl{flag-spanning tour} of a hypergraph is a walk which uses each flag
    $(v \in e, e)$ exactly once. As discussed in \S\ref{sec:valency}, such tours
    form a convenient partial analogue to Eulerian circuits in graphs,
    with similar valency-related criteria for their existence. A key focus of
    our study of the valency of $\saxh$ is therefore the question of which
    groups have a flag-spanning tour. We were able to achieve the following partial classification:

    \begin{theorem}
        \label{thm:flag-span}
            Let $G \le \Sym{(\Omega)}$ be a finite primitive permutation group
            such that $b(G) \in \{ 3, 4 \}$. Then one of the following holds:
            \begin{enumerate}
                    \item $\saxh$ has a flag-spanning tour;
                        \label{flag-span:yes}
                    \item $G$ is a primitive group with $| \Omega |$ odd,
                        as classified by Liebeck and Saxl
                        in \cite{odd_prim}, and $\saxh$ does not have
                        a flag-spanning tour; \label{flag-span:odd}
                    \item $G$ is a soluble group, $|\Omega|$
                            a power of two, and $|G|/|\Omega|$ is
                            not divisible by 4; \label{flag-span:solv}
                    \item $G$ is a group of product type, its base group
                        and associated point stabiliser appear
                        in \tbl{nofstbas}, and its top group is
                        a transitive (soluble) group of order
                        not divisible by 4; \label{flag-span:prod}
                    \item $G \in \{ \psl{2}{q}, \pgl{2}{q} \}$
                        acts on the projective line, where
                        $q \equiv 3 \textrm{ mod } 4$, and
                        $\saxh$ does not have a flag-spanning tour; or
                        \label{flag-span:lin1}
                    \item $G = \pgammal{2}{2^e}$ acts on pairs of
                        distinct projective points, $e$ is
                        a square-free odd integer, and $\saxh$ does not
                        have a flag-spanning tour. \label{flag-span:lin2}
            \end{enumerate}
    \end{theorem}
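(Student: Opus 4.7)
The strategy is to couple the Eulerian-type criterion for a flag-spanning tour developed in \S\ref{sec:valency} --- namely that $\saxh$ admits such a tour if and only if it is connected and every vertex has even degree (counting edges containing it) --- with a case-by-case O'Nan--Scott analysis of the common vertex degree $d$ in $\saxh$. Since $G$ is transitive on $\Omega$, $\saxh$ is vertex-transitive; writing $b = b(G)$ and letting $r$ be the number of $G$-orbits on ordered bases of size $b$, a standard flag-counting argument together with the triviality of the pointwise stabiliser of an ordered base gives
\[
    d \;=\; \frac{|G_\alpha|\, r}{(b-1)!}.
\]
Primitivity together with $b(G) \ge 2$ should yield connectedness of $\saxh$ on general grounds, so the task reduces to characterising those primitive $G$ with $b \in \{3,4\}$ and $d$ odd, and checking they fall into cases \ref{flag-span:odd}--\ref{flag-span:lin2}.

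The proof then proceeds type by type in the O'Nan--Scott classification. For diagonal and twisted wreath types, known lower bounds on $|G_\alpha|$ force $(b-1)!$ to divide $|G_\alpha|$ with an even quotient, so $d$ is always even and case \ref{flag-span:yes} holds. The odd-degree possibility invokes the Liebeck--Saxl classification \cite{odd_prim} to produce \ref{flag-span:odd}. For affine primitive $G$ with $|\Omega| = p^n$, the parity of $d$ forces $p = 2$ and isolates the condition $4 \nmid |G|/|\Omega|$, giving \ref{flag-span:solv}. For $L \wr P$ in product action, the standard recipe for bases of $L \wr P$ built from bases of $L$ lets $d$ be factorised through base and top groups; the parity contribution of $L$ produces the tabulated base--stabiliser pairs of \tbl{nofstbas}, while the top-group contribution constrains $P$ to be transitive soluble of order not divisible by $4$, yielding \ref{flag-span:prod}.

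The technical heart is the almost simple type. Applying the classifications of Burness~\emph{et al.}~\cite{basecls,basesym,basespor,basesim}, one enumerates primitive almost simple groups with $b(G) \in \{3,4\}$. For $\psl{2}{q}$ and $\pgl{2}{q}$ on the projective line ($q$ odd, $q \ge 5$), the pointwise stabiliser of an ordered pair is cyclic of known order, and a parity check on its orbit count on the remaining $q - 1$ points shows that $d$ is odd precisely when $q \equiv 3 \pmod 4$, yielding \ref{flag-span:lin1}. A parallel but more delicate computation for $\pgammal{2}{2^e}$ on pairs of distinct projective points --- using a M\"obius-type sum over divisors of $e$ to account for field-automorphism contributions to base-orbits --- isolates the odd-$d$ case to $e$ square-free and odd, giving \ref{flag-span:lin2}. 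The remaining almost simple candidates (sporadic or alternating socle in non-natural action, and the low-degree exceptional actions surviving from the classification) are dispatched by direct computation in a computer algebra system.

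The main anticipated obstacle is precisely this almost simple parity bookkeeping, especially the $\pgammal{2}{2^e}$ case, where extracting the square-free-odd condition on $e$ demands careful attention to how the cyclic group of field automorphisms distributes across base-orbits. The product-type computation will also require care to cleanly separate base and top contributions to $d$; by contrast, the diagonal, twisted wreath, affine, and odd-degree reductions should be comparatively routine once the general formula for $d$ is in hand.
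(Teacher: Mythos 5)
Your proposal starts from the wrong criterion for the existence of a flag-spanning tour, and this is not a cosmetic slip. The Eulerian-type condition you state (connectedness plus even vertex degrees) is precisely the graph-theoretic criterion that does \emph{not} carry over to hypergraphs; the correct statement, due to Bahmanian and Shan \cite{bah_shan} and used in the paper, is that a flag-spanning tour exists if and only if all vertex degrees are even \emph{and the number of vertices is even}. The second condition is what generates case~\ref{flag-span:odd} of the theorem: a primitive group of odd degree fails to have a flag-spanning tour no matter what the valency of $\saxh$ is, which is why the Liebeck--Saxl classification enters. Under your criterion, a connected $\saxh$ with even valency on an odd number of points would admit a tour, directly contradicting the assertion in case~\ref{flag-span:odd}, so the classification you would derive is different from (and inconsistent with) the one being proved. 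You do mention Liebeck--Saxl in passing, but your stated criterion gives you no logical route to that case.

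The second, and structurally more serious, omission is the reduction that makes the whole analysis tractable. From the valency formula $(b(G)-1)!\,d = n\,|G_\alpha|$ with $b(G) \le 4$, odd valency $d$ forces $4 \nmid (b(G)-1)!\,d$ and hence $4 \nmid |G_\alpha|$; by the Odd Order Theorem, $G_\alpha$ is then \emph{soluble}. This single observation immediately eliminates the twisted wreath and diagonal types (whose point stabilisers have non-abelian composition factors), identifies the affine case as case~\ref{flag-span:solv}, and reduces the almost simple and product cases to a finite enumeration of soluble maximal subgroups via Burness's tables in \cite{simpsoltab} and \cite{prodsoltab}. Your proposed alternative --- enumerating \emph{all} primitive almost simple groups with $b(G) \in \{3,4\}$ from \cite{basecls,basesym,basespor,basesim}, and handling diagonal/twisted wreath by ``lower bounds on $|G_\alpha|$'' --- does not work as stated: a lower bound on $|G_\alpha|$ cannot control the parity of $d = n|G_\alpha|/(b(G)-1)!$, and the unrestricted enumeration of almost simple groups of base size $3$ or $4$ (including standard actions) is far beyond what the parity bookkeeping you sketch could cover. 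The detailed parity computations you outline for $\psl{2}{q}$, $\pgl{2}{q}$ and $\pgammal{2}{2^e}$ do align with the paper's \lem{flag-span-simp}, but they only become a finite, checkable list of cases once the solubility reduction is in place.
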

    \begin{table}
        \begin{minipage}{0.4\textwidth}
        \begin{tabular}{ll}
        \toprule
            $G$ & $H$ \\ 
            \midrule
            $\psl{3}{3}$ & $\cyc{13} {:} \cyc{3}$ \\ 
            $\psl{3}{3}.2$ & $\cyc{13} {:} \cyc{6}$ \\ 
            $\psl{3}{4}.\sym{3}$ & $\cyc{7} {:}
                (\cyc{3} \times \sym{3})$ \\ 
            $\psl{3}{4}.\dih{12}$ & $\sym{3} \times
                (\cyc{7} {:} \cyc{6})$ \\ 
            $\sym{7}$ & $\cyc{7} {:} \cyc{3}$ \\
            $\psl{2}{q}$ & $q {:} \left( \frac{q-1}{2} \right)$ \\
            ($q \equiv 3 \textrm{ mod } 4$) & \\
            \bottomrule
        \end{tabular}
        \end{minipage}
        \begin{minipage}{0.55\textwidth}
        \caption{\label{tbl:nofstbas}Potential base groups $G$
            with point stabilisers $H$ for groups $K$ of product type
            such that $b(K) \in \{ 3, 4 \}$ and $\saxh[K]$ has
            no flag-spanning tour.}
        \end{minipage}
    \end{table}

    Some additional comments on the contents of \thm{flag-span}
    appear at the end of \S\ref{sec:valency}.

    \subsection{Organisation}

    We begin \S\ref{sec:prelim} with some basic notation and
    hypergraph-related definitions, before collating various
    elementary observations about Saxl hypergraphs and
    further definitions in \S\ref{sec:lem212} and \S\ref{sec:arcs}.
    The first major topic, complete Saxl hypergraphs, then appears
    in \S\ref{sec:completeness}; the second, extensions of the CNC
    to hypergraphs, in \S\ref{sec:cnc}; and the last,
    valency-related results (including flag-spanning tours),
    in \S\ref{sec:valency}.

    \subsection*{Acknowledgements}
    
    % The authors thank Professor David Wood for \antc{I asked David
    % about terminology for ``gossip numbers'' during an unplanned
    % encounter}{assistance with graph-theoretic terminology}.
    
    This research was supported by an Australian Research Council Discovery
    Early Career Researcher Award (project number DE230100579).

    \section{\label{sec:prelim}Preliminaries}

    Let us first fix some basic notation and terminology. The indicator function
    $\ind{P}$ represents 1 when $P$ holds, and 0 otherwise. For a prime power $q$,
    the field of $q$ elements is denoted by $\mathbb{F}_q$, its group of units
    by $\mathbb{F}_q^{\times}$, and the vector space of dimension $d > 0$
    over $\mathbb{F}_q$ by $\vsp{d}{q}$. Our notation for many groups is standard, e.g. $\cyc{n}, \dih{n}$ for the cyclic and dihedral groups of
    order $n$, although it is noteworthy that the additive group of $\vsp{d}{q}$
    is itself written $\vsp{d}{q}$. An extension of a group $B$ by $A$ is
    denoted $A.B$, where $A$ is the normal subgroup, or sometimes $A {:} B$
    to highlight that the extension splits. Direct products are written
    $A \times B$. \\

    \label{onan+scott}
    Per the O'Nan--Scott Theorem \cite[Thm.\ 4.1A]{perm_grp},
    finite primitive groups fall into five classes. We term these
    the \textsl{affine},
    \textsl{almost simple}, \textsl{diagonal}, \textsl{product}, and
    \textsl{twisted wreath}-type groups. A brief description of all but the twisted wreath type groups, whose structure is more complex, is given in Table \ref{ons_types}, following \cite[Table 1]{prodsoltab}. In the particular case of
    primitive classical groups, the maximal subgroup type notation of
    Kleidman and Liebeck \cite{k+l} is employed to label point stabilisers.
    %although this may be treated as opaque throughout. 
    For groups
    $G \le L \wr P$ of product or diagonal type, meanwhile, the terms
    \textsl{base} and \textsl{top} group are used to refer to $L$ and $P$,
    respectively.

    \begin{table}[h]
    \renewcommand{\arraystretch}{1.2}
        \begin{tabular}{lp{9cm}}
        \toprule
            \textbf{Type} & \textbf{Description} \\ \midrule
            Affine & $G = V {:} H \le AGL(V)$,
                $H \le \ling{}{V}$ irreducible \\ 
            Almost simple & $T \le G \le \Aut{(T)}$ \\ 
            Diagonal & $T^k \le G \le T^k.({\rm Out}(T) \times P)$,
                $P \le \sym{k}$ primitive, or $k = 2$, $P = 1$ \\ 
            Product & $G \le L \wr P$, $L$ primitive of
                almost simple or diagonal type, $P \le \sym{k}$
                transitive \\ 
            Twisted wreath & \\
            \bottomrule
        \end{tabular}
        \caption{\label{ons_types} The types of finite primitive groups in the O'Nan-Scott theorem.}
    \end{table}

    \subsection{\label{sec:hyper}Hypergraphs}

    As noted in \S\ref{sec:intro}, we consider a hypergraph $G$ to be a pair
    composed of a vertex set $V$ and a (hyper)edges set
    $E \subseteq \mathcal{P} (V)$. Despite several inequivalent definitions of
    the term appearing in the literature \cite{hyper_rev}, the differences are
    immaterial insofar as our application to Saxl hypergraphs is concerned,
    which is henceforth almost their exclusive purpose. The Saxl hypergraph and
    generalised Saxl graph of a group $G$ are denoted by $\saxh$ and $\saxg$,
    respectively. \\

    Most of the terminology to be used in reference to hypergraphs
    simply generalises well-known graph-theoretic nomenclature
    in a straightforward fashion. For instance, two distinct vertices of
    a hypergraph $H = (V, E)$ are said to be \textsl{adjacent} if they belong
    to a common edge, while a \textsl{walk} of length $s$ is any sequence
    $v_0, E_1, v_1, \ldots , E_s, v_s$ of $v_i \in V$ and $E_i \in E$ such that
    each vertex lies in the neighbouring edge(s). We note only that
    the \textsl{degree} of a vertex refers to the number of edges containing it,
    a value often unequal to the size of its neighbourhood, and that
    a \textsl{partial hypergraph} is a hypergraph $(V' \subseteq V,
        E' \subseteq E)$ --- the analogue of a subgraph --- alongside
    some more novel concepts. \\

    A \textsl{$k$-uniform} hypergraph is one in which all edges have
    equal cardinality $k$. In particular, a $2$-uniform hypergraph is
    merely a simple graph. By analogy with this restricted case,
    a $k$-uniform hypergraph is called \textsl{complete} if it has
    all possible sets of $k$ vertices as edges. \\

    The \textsl{2-section} $[H]_2$ of a hypergraph $H = (V, E)$, meanwhile,
    is the graph on $V$ with edges between neighbours in $H$. It is clear
    that $[H]_2 = H$ in the event that $H$ is itself a graph. \\

    Further definitions will be introduced in the sections of the paper
    where they are used. For a more comprehensive introduction to hypergraphs,
    the reader is advised to consult Bretto \cite{bretto}, especially
    chapters 1, 2, and 7.

    \subsection{\label{sec:lem212}Basic Results}

    An extremely valuable observation is immediate upon considering
    the relevant definitions:

    \begin{lemma}
        \label{lem:2sec}
        For any Saxl hypergraph $\saxh = (V, E)$, there exists a walk of length $s$
        between $v, w \in V$ in $H$ if and only if there exists such a walk
         in the generalised Saxl graph $\saxg=[\saxh]_2$. 
         %\melt{I had a bit of trouble parsing this -- let's chat.}{In particular, as much holds for $\saxh$ and
        % $[\saxh]_2 = \saxg$}, 
        A similar duality extends to any property
        of graphs and hypergraphs defined in terms of paths (e.g.\ connectedness,
        neighbourhoods, diameters).
    \end{lemma}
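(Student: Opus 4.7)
The plan is to split the argument in two: first recognise that the $2$-section $[\saxh]_2$ coincides with the generalised Saxl graph $\saxg$, and then establish the length-preserving walk correspondence between any hypergraph and its $2$-section.

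For the identification $[\saxh]_2 = \saxg$, I would compare the definitions directly. Vertices $\alpha, \beta \in \Omega$ are adjacent in $\saxg$ exactly when they lie in a common base of size $b(G)$, i.e.\ in a common hyperedge of $\saxh$; and this is precisely the adjacency condition in $[\saxh]_2$. Since both graphs share the vertex set $\Omega$, they are equal.

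The walk correspondence is then handled in both directions. Any walk $v_0, E_1, v_1, \ldots, E_s, v_s$ of length $s$ in $\saxh$ projects to the length-$s$ walk $v_0, v_1, \ldots, v_s$ in $[\saxh]_2 = \saxg$, because each consecutive pair $v_{i-1}, v_i$ shares the hyperedge $E_i$ and is hence adjacent in $\saxg$. Conversely, given a length-$s$ walk $v_0, v_1, \ldots, v_s$ in $\saxg$, for each $i$ the adjacency $v_{i-1} \sim v_i$ is witnessed by at least one hyperedge of $\saxh$; any choice of such witnesses assembles a length-$s$ walk in $\saxh$ with the same endpoints.

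The final assertion now follows immediately: connectedness, neighbourhoods, diameters, and more generally any notion expressible through the existence or lengths of walks between vertex pairs, depend only on data preserved by the above bijection, and so transfer verbatim between $\saxh$ and $\saxg$. The whole argument is essentially definitional, so there is no genuine obstacle; the only subtlety worth flagging is ensuring that the length parameter $s$ is carried across exactly, not merely bounded, so that derived quantities like diameter really do coincide.
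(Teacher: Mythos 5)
Your argument is correct and matches the paper's intent exactly: the paper states this lemma as ``immediate upon considering the relevant definitions'' and gives no explicit proof, and your write-up simply spells out the two definitional steps (the identification $[\saxh]_2 = \saxg$ and the length-preserving walk correspondence in both directions) that the authors leave implicit. Nothing is missing.
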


    Some further useful results are given in the lemma below, including
    generalisations of the properties of Saxl graphs highlighted by Burness and Giudici\cite[Lemma~2.1]{burngiu}.
    Recall that a permutation group $G \le \Sym{(\Omega)}$ is termed \textsl{$k$-homogenous} if it
    acts transitively on the $k$-element subsets of $\Omega$.

    \begin{lemma}
        \label{lem:lem21}
        Let $G \le \Sym{(\Omega)}$ be a finite transitive permutation group.
        Suppose $b(G) \ge 2$.

        \begin{enumerate}
            \item $\saxh$ is $b(G)$-uniform. \label{lem21:uniform}
            \item  \label{lem21:aut} $G \le \Aut{(\saxh)}$ acts transitively on the set of
                vertices of $\saxh$, and consequently $\saxh$ is
                a vertex-transitive hypergraph with no isolated vertices.
            \item If $G$ is primitive, then $G$ is connected. \label{lem21:prim}
            \item The hypergraph $\saxh$ is edge-transitive --- indeed, complete
                --- if $G$ is $b(G)$-homogeneous.  \label{lem21:hom}
            \item If $K\leq G$ and $b(K) = b(G)$, then $\saxh$ is a partial hypergraph of $\saxh[K]$. \label{lem21:subg}
            \item $\alpha, \beta \in \Omega$ have the same set of neighbours in $\saxh$
                whenever $G_\alpha = G_\beta$. \label{lem21:eqstab}
        \end{enumerate}
    \end{lemma}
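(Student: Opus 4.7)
The plan is to dispatch parts (i), (ii), (iv), and (v) as essentially immediate manipulations of definitions, reduce (iii) to the already-understood graph case via \lem{2sec}, and reserve the bulk of the real work for part (vi). For (i), the edges of $\saxh$ are by definition the bases of size $b(G)$, so uniformity is automatic. For (ii), the standard observation $G_{g(B)} = g G_B g^{-1}$ shows that $G$ permutes bases, hence edges, giving $G \le \Aut(\saxh)$; vertex-transitivity is inherited from the action on $\Omega$, and because $b(G) \ge 2$ guarantees at least one edge, transitivity on vertices rules out isolated ones. For (iv), any $b(G)$-subset $S$ under a $b(G)$-homogeneous action is the image of a fixed base $B$, and since $G$ preserves bases, $S$ is itself a base, making $\saxh$ complete. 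For (v), if $K \le G$ with $b(K) = b(G)$ and $B$ is an edge of $\saxh$, then $K_B \le G_B = 1$, so $B$ is a base for $K$ of the correct size and thus an edge of $\saxh[K]$.

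For (iii), I would invoke \lem{2sec} to reduce connectedness of $\saxh$ to that of the generalised Saxl graph $\saxg = [\saxh]_2$, then deploy the block-of-imprimitivity argument of Burness and Giudici for the base-2 case. Explicitly, let $C$ be the connected component of a vertex $\alpha$ in $\saxg$; since adjacency is $G$-invariant and $G_\alpha$ fixes $\alpha$, $C$ is $G_\alpha$-invariant, and one checks that $g(C) \cap C \in \{ \emptyset, C \}$ for every $g \in G$, exhibiting $C$ as a $G$-block. Primitivity forces $C = \{ \alpha \}$ or $C = \Omega$, and part (ii) excludes the former.

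Part (vi) is the most delicate step. Assume $G_\alpha = G_\beta$ with $\alpha \ne \beta$, and let $\delta \ne \beta$ be a neighbour of $\alpha$, witnessed by an edge $E \ni \alpha, \delta$. If $\beta \in E$, then $\delta$ is automatically a neighbour of $\beta$ via $E$ itself. Otherwise, I would form $E' = (E \setminus \{ \alpha \}) \cup \{ \beta \}$, which has cardinality $b(G)$ since $\beta \notin E$, and whose pointwise stabiliser is
\[
G_\beta \cap \bigcap_{\gamma \in E \setminus \{ \alpha \}} G_\gamma = G_\alpha \cap \bigcap_{\gamma \in E \setminus \{ \alpha \}} G_\gamma = \bigcap_{\gamma \in E} G_\gamma = 1,
\]
so $E'$ is an edge witnessing $\delta$ as a neighbour of $\beta$. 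Symmetry closes the argument. The main obstacle worth flagging is precisely the necessity of the case split: a naive swap when $\beta$ already lies in $E$ would collapse $|E'|$ below $b(G)$, violating minimality, so one cannot bypass the dichotomy.
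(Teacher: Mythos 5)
Your treatment of parts (i), (ii), (iv) and (v) matches the paper's (which dismisses them in a line each), and for (iii) you simply inline the standard block-of-imprimitivity argument that the paper outsources to Freedman \emph{et al.}\ via \lem{2sec} --- same idea, just self-contained. For (vi) your ``swap $\alpha$ for $\beta$'' argument is equivalent to the paper's observation that the edges through $\alpha$ are exactly the minimal bases of $G_\alpha = G_\beta$ with $\alpha$ adjoined, and your stabiliser computation is correct.

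There is, however, one small loose end in (vi). You prove that every neighbour $\delta \neq \beta$ of $\alpha$ is a neighbour of $\beta$ and appeal to symmetry, which yields $N(\alpha) \setminus \{\beta\} \subseteq N(\beta)$ and $N(\beta) \setminus \{\alpha\} \subseteq N(\alpha)$; but literal equality of the neighbour sets additionally requires $\beta \notin N(\alpha)$ and $\alpha \notin N(\beta)$, since a vertex is never its own neighbour. This is true and easy: if $E$ were an edge containing both $\alpha$ and $\beta$, then $G_{(E \setminus \{\beta\})} \le G_\alpha = G_\beta$ already fixes $\beta$, so $E \setminus \{\beta\}$ would be a base of size $b(G) - 1$, contradicting minimality. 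Note that this same observation shows your Case 1 ($\beta \in E$) is in fact vacuous, so the dichotomy you flag as essential never actually bifurcates. Adding that one line makes the argument complete.
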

    \begin{proof}
        Apart from \ref{lem21:uniform}, which is obvious, the arguments are
        more or less analogous to the base 2 case. The transitivity of
        $G$ immediately yields \ref{lem21:aut}. For \ref{lem21:prim},
        it suffices to apply Lemma~2.5 of Freedman \etal\cite{fhlk}
        in conjunction with \lem{2sec}. Item \ref{lem21:hom} follows
        from the definition of $b(G)$-homogeneity, while
        \ref{lem21:subg} is a direct consequence of the observation that
        any base for $G$ is a base for $K \le G$ too. Finally, to show
        \ref{lem21:eqstab}, note that the bases for $G$ of size $b(G)$
        containing $\alpha$ (respectively, $\beta$) are obtained
        by adding $\alpha$ (resp.\ $\beta$) to the bases
        for $G_\alpha = G_\beta$ of size $b(G) - 1$.
    \end{proof}

    % The aspects of Burness and Giudici's lemma concerning arcs are
    % discussed in the following subsection; other facets appear
    % in \S\ref{sec:completeness} and the beginning of \S\ref{sec:valency}.

    \subsection{\label{sec:arcs}Properties of arcs}

    Arcs in Saxl graphs are considered by Burness and Giudici
    \cite[Lemma~2.1]{burngiu}, an investigation extended
    to generalised Saxl graphs by Freedman \etal\cite{fhlk}. Unfortunately,
    the situation for Saxl hypergraphs is slightly complicated
    by the lack of clear and convenient analogue to the concept of an arc. \\

    The usual generalisation, defined by analogy
    with a directed graph, is a \textsl{hyperarc}
    in a directed hypergraph: a partition of an edge
    into two non-empty sets of ``sources'' and ``sinks''.
    Since such partition is not unique for an edge of
    more than two vertices, however, arc-transitivity and
    other related definitions become convoluted or even uninteresting.
    We therefore present two alternative generalisations: an \textsl{$s$-arc}
    ($s \ge 1$), as defined by Mansilla and Serra \cite{mans_serr}, is a walk
    of length $s$ in which any three consecutive vertices or
    two consecutive edges are distinct; we will define a \textsl{ray}, by contrast, as a single edge equipped
    with an ordering of its constituent vertices. Some easy initial observations
    may then be established.

    \begin{lemma}
        \label{lem:arcs}
        Let $G$ be a finite group.

        \begin{enumerate}
            \item If $\saxh$ is $s$-arc transitive, it is also $r$-arc transitive
                for all $1 \le r < s$. \label{arcs:arc_ext}
            \item 1-arc transitivity implies edge transitivity of $\saxh$
                and arc transitivity of $\saxg$. \label{arcs:1}
            \item Ray transitivity of $\saxh$ implies arc transitivity
                of $\saxg$. \label{arcs:ray}
            \item $G$ acts semiregularly on the rays of $\saxh$.
                \label{arcs:semireg}
        \end{enumerate}
    \end{lemma}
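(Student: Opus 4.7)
My plan is to dispatch parts \ref{arcs:1}--\ref{arcs:semireg} directly from the definitions, leaving \ref{arcs:arc_ext} --- which follows the classical graph-theoretic extension pattern --- as the only step requiring substantive care.

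For \ref{arcs:semireg}, I observe that any $g \in G$ fixing a ray $(v_1, \ldots, v_{b(G)})$ must fix each $v_i$ individually, and hence lies in the pointwise stabiliser of the underlying edge. Since every edge of $\saxh$ is by definition a base of $G$, this stabiliser is trivial, yielding semiregularity. For \ref{arcs:ray}, any arc $(\alpha, \beta)$ of $\saxg$ may be realised as the first two entries of a ray: $\saxg$-adjacency supplies an edge $E \in E(\saxh)$ containing both vertices, and any ordering of $E$ beginning $\alpha, \beta, \ldots$ is such a ray. Ray transitivity of $\saxh$ then transports one such ray --- and in particular its initial pair --- to any other. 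Item \ref{arcs:1} proceeds analogously: the assumption $b(G) \ge 2$ ensures that each edge of $\saxh$ has at least two vertices, so any edge extends to a 1-arc, and 1-arc transitivity transports these 1-arcs (and hence their underlying edges) between each other; arc-transitivity of $\saxg$ follows by lifting each $\saxg$-arc to a 1-arc of $\saxh$ in the same fashion.

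For \ref{arcs:arc_ext}, I would apply the standard restriction argument: given two $r$-arcs $A$ and $A'$, extend each to an $s$-arc $B$ and $B'$, and use $s$-arc transitivity to find $g \in G$ with $gB = B'$; restricting to the initial $r$-arc segments then yields $gA = A'$. The main obstacle --- and the only genuinely delicate point in the lemma --- is confirming that every $r$-arc actually extends to an $s$-arc. I would handle this by induction on $s-r$, observing that each single-step extension requires only that the current terminal vertex lie in at least two edges of $\saxh$ (so as to supply a new edge distinct from the previous one) and that each edge contain at least two vertices (so that the new edge supplies a new vertex distinct from the previous). The second condition is immediate from $b(G) \ge 2$; for the first, any $s$-arc with $s \ge 2$ exhibits some internal vertex of degree at least two, and vertex-transitivity of $G$ on $\saxh$ (\lem{lem21}\ref{lem21:aut}) propagates this property to every vertex.
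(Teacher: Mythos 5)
Your treatment of parts \ref{arcs:1}, \ref{arcs:ray} and \ref{arcs:semireg} is correct and coincides with the paper's: rays and $1$-arcs are obtained by ordering or truncating edges, and semiregularity on rays is exactly the statement that the underlying edges are bases. Part \ref{arcs:arc_ext} also follows the paper's extend-then-restrict argument, but your justification of the extendability step has a genuine gap. To extend an $r$-arc forward you need its terminal vertex to lie in at least two distinct edges, and you derive this by exhibiting an internal vertex of degree at least two in ``any $s$-arc with $s \ge 2$'' and then propagating via vertex-transitivity. This fails on two counts. First, when $r = 1$ the arcs you start from have no internal vertex, and the existence of \emph{some} arc of length at least $2$ elsewhere in $\saxh$ is precisely what is in question (an $s$-arc transitive hypergraph may a priori have no $s$-arcs at all, making the hypothesis vacuous), so the argument is circular exactly where it is needed. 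Second, part \ref{lem21:aut} of \lem{lem21} gives vertex-transitivity only for transitive $G$, whereas the lemma carries no transitivity assumption.

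The paper closes this gap with a direct observation you should substitute: irredundancy of minimal bases forces every non-isolated vertex of $\saxh$ to lie in at least two edges. Concretely, if $\alpha$ lies in an edge $E$, pick $\beta \in E \setminus \{\alpha\}$ and $g \neq 1$ fixing $E \setminus \{\beta\}$ pointwise (such $g$ exists because $E$ is a base of minimum size, hence irredundant, and $g$ must move $\beta$ since $E$ is a base); then $E^g$ is a second edge containing $\alpha$ and distinct from $E$. This requires no transitivity and no pre-existing long arcs. One further small imprecision: the new vertex $v_{r+1}$ must differ from the previous \emph{two} vertices, not just one; this does work out, since either $|E_{r+1}| \ge 3$, or $b(G) = 2$ and $E_{r+1} \neq E_r = \{v_{r-1}, v_r\}$ already forces the remaining vertex of $E_{r+1}$ to avoid $v_{r-1}$, but the reason you give does not cover it.
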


    \begin{proof}
        For \ref{arcs:arc_ext}, let
        $v_0, E_1, v_1, E_2, \ldots , E_r, v_r$
        and $v_0', E_1', v_1', E_2', \ldots , E_r', v_r'$ be
        any two $r$-arcs of $\saxh$ ($1 \le r < s$). Choosing
        suitable vertices and edges of $\saxh$ (as is always
        possible, since the irredundancy of minimal bases ensures
        any non-isolated vertex has at least 2 neighbours), we may extend
        these to $s$-arcs $v_0, E_1, \ldots , E_s, v_s$
        and $v_0', E_1', \ldots , E_s', v_s'$; by $s$-arc
        transitivity, there then exists an automorphism
        $\varphi$ of $\saxh$ that maps the former to the latter,
        and thus our arbitrary first $r$-arc to our arbitrary
        second $r$-arc. This gives $r$-arc transitivity.
        Parts \ref{arcs:1} and \ref{arcs:ray} are similar: here, any edge
        of $\saxh$ or pair of neighbouring vertices in
        $\saxg$ (which, by \lem{2sec}, are neighbours
        in $\saxh$ also) belong to a 1-arc, while
        any two neighbours in $\saxg$ (hence $\saxh$)
        extend to a ray. \\

        Finally, \ref{arcs:semireg} follows from the definition of
        a base and the fact that rays are ordered.
    \end{proof}

    \begin{remark}
            Note that $G$ need not act semiregularly
            on the $s$-arcs of $\saxh$. For instance, if
            some $g \neq 1 \in G$ fixes points $\alpha, \beta$
            in an edge $E$ while permuting the remaining
            points of $E$ --- as occurs for any edge when
            $G$ is a symmetric group --- then the $1$-arc
            $\alpha, E, \beta$ has a non-trivial stabiliser.
    \end{remark}

    Per parts (i-iii) of \lem{arcs}, the completion of the classification
    of arc-transitive generalised Saxl graphs begun in \cite{fhlk} will have
    implications for the possible $s$-arc- and ray-transitive
    Saxl hypergraphs. 

    \section{\label{sec:completeness}Complete Saxl hypergraphs}

    To fix our ideas, let $G \le \Sym{(\Omega)}$ be any permutation group
    (not necessarily transitive). We say $G$ is $\komp{n}$ if
    $b(G) = n$ and any $n$ points of $\Omega$ form a base for $G$.
    For $n = b(G) \ge 2$, this property is clearly equivalent to $G$ having
    a complete Saxl hypergraph. Some key results can
    then be stated as follows.

    \begin{lemma}
        \label{lem:complete}
        Let $G \le \Sym{(\Omega)}$ be as above, and $n \ge 0$ an integer.
        Then $G$ is $\komp{n}$ if and only if
        $(G, n) \neq (1, 1)$ and, for all $\alpha \in \Omega$,
        the group $G_\alpha$ acting on $\Omega \backslash \{ \alpha \}$
        is $\komp{n-1}$. In particular, a group is
        $\komp{0}$ if and only if it is trivial,
        $\komp{1}$ if and only if it acts semiregularly, and
        $\komp{2}$ if and only if it is Frobenius.
    \end{lemma}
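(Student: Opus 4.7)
My plan is to build the proof around the observation that, for any $\alpha \in \Omega$ and any subset $B \subseteq \Omega$ containing $\alpha$, the pointwise stabilisers coincide: $G_B = (G_\alpha)_{B \backslash \{\alpha\}}$. In particular, $B$ is a base for $G$ if and only if $B \backslash \{\alpha\}$ is a base for $G_\alpha$ in its action on $\Omega \backslash \{\alpha\}$. This sets up a natural bijection between bases of size $n$ for $G$ through $\alpha$ and bases of size $n - 1$ for $G_\alpha$, on which both directions of the main equivalence will ride; the recursion is meaningful only for $n \ge 1$, while the case $n = 0$ falls out directly from the definition of a base.

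For the forward direction, if $G$ is $\komp{n}$, then $(G, n) = (1, 1)$ is immediately ruled out since $b(1) = 0$. For each $\alpha$, the correspondence above shows that every $(n-1)$-subset of $\Omega \backslash \{\alpha\}$ is a base for $G_\alpha$; any smaller base $B'$ for $G_\alpha$ would give $B' \cup \{\alpha\}$ as a base for $G$ of size less than $n$, contradicting $b(G) = n$, so $G_\alpha$ is $\komp{n-1}$. For the converse, given any $n$-subset $B$ of $\Omega$ with $\alpha \in B$, I apply the $\komp{n-1}$ hypothesis on $G_\alpha$ to make $B \backslash \{\alpha\}$ a base for $G_\alpha$, and hence $B$ a base for $G$. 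Any strictly smaller base for $G$ is either empty (in which case $G = 1$, and the $\komp{n-1}$ condition on $G_\alpha = 1$ forces $n = 1$, contradicting the excluded pair) or contains a point producing a smaller base for $G_\alpha$, contradicting $b(G_\alpha) = n - 1$. Thus $b(G) = n$ and $G$ is $\komp{n}$.

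The three ``in particular'' cases should then fall out easily. $\komp{0}$ is by definition $b(G) = 0$, equivalently $G = 1$. The $n = 1$ case comes from the recursion as ``$G \ne 1$ and $G_\alpha = 1$ for every $\alpha$'', which is exactly nontrivial semiregularity. For $\komp{2}$, the recursion yields that each $G_\alpha$ is nontrivial and semiregular on $\Omega \backslash \{\alpha\}$, so $G$ is Frobenius on each of its orbits; I expect the main obstacle to be establishing transitivity so that ``Frobenius'' applies in the usual sense. My plan here is to invoke the standard structure theory: a Frobenius complement $H \le G$ is self-normalising, so $G$ has exactly $|K| = [G:H]$ distinct conjugates of $H$, which coincide with the point stabilisers of any single Frobenius orbit (itself of size $|K|$). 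Since the $\komp{2}$ condition forces all $|\Omega|$ point stabilisers of $G$ to be distinct, this caps $|\Omega| \le |K|$ and rules out more than one orbit. The converse---that every Frobenius group is $\komp{2}$---is then immediate.
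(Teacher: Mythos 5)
Your reduction $G_B = (G_\alpha)_{B\backslash\{\alpha\}}$, the two directions of the main equivalence, and the $\komp{0}$, $\komp{1}$ cases are all correct and essentially identical to the paper's argument. The gap is in the transitivity step for $\komp{2}$. Having fixed one orbit $\mathcal{O}\ni\alpha$ on which $G$ acts faithfully as a Frobenius group with complement $H=G_\alpha$ and kernel $K$, you correctly observe that $H$ is self-normalising with exactly $|K|=|\mathcal{O}|$ conjugates, and that these are the stabilisers of the points of $\mathcal{O}$. But the inference ``all $|\Omega|$ point stabilisers are distinct, hence $|\Omega|\le|K|$'' presupposes that \emph{every} point stabiliser of $G$ is a conjugate of $H$ --- which is precisely the transitivity you are trying to prove. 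In a hypothetical second orbit $\mathcal{O}'$, a stabiliser $G_\beta$ is again a nontrivial self-normalising TI-subgroup (a complement for the Frobenius action on $\mathcal{O}'$), but its non-identity elements fix no point of $\mathcal{O}$, so $G_\beta$ lies inside the kernel $K$ and is therefore \emph{not} conjugate to $H$. Your count thus only yields $|\mathcal{O}|\le|K|$, which is automatic, and places no bound on $|\Omega|$.

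The step can be closed by counting, which is what the paper does: in a faithful Frobenius orbit $\mathcal{O}_i$ with kernel $K_i$ and complement $H_i$, the elements of $G$ possessing a fixed point in $\mathcal{O}_i$ number $|K_i|(|H_i|-1)+1=|G|-|K_i|+1>|G|/2$, since $|K_i|=|G|/|H_i|\le|G|/2$. Two orbits would force these two sets (each of size at least $|G|/2+1$) to share a non-identity element, i.e.\ an element fixing a point in each orbit, contradicting $\komp{2}$. (Equivalently: only the identity fixes points in both orbits, so $|G|+1\le|K_1|+|K_2|\le|G|$.) You should also make explicit the two preliminary facts your phrase ``Frobenius on each orbit'' relies on, namely that no orbit is a singleton (else $G=G_\alpha$ would be both $\komp{2}$ and $\komp{1}$) and that the action on each orbit is faithful (the kernel lies in a two-point stabiliser); both are immediate from $\komp{2}$ but are needed before the structure theorem can be invoked.
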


    \begin{proof}
        The bases of minimum size for $G$ containing a given $\alpha \in \Omega$,
        should they exist, are precisely the bases of minimum size for $G_\alpha$
        with $\alpha$ added. It follows that, if $G$ is $\komp{n}$, such bases of
        $G_\alpha$ (for any $\alpha \in \Omega$) have $n - 1$ points, and that
        any $n - 1$ points in $\Omega \backslash \{ \alpha \}$ form such a base;
        the group $G_\alpha$ acting on $\Omega \backslash \{ \alpha \}$ is
        therefore $\komp{n-1}$. Conversely, if the group
        $G_\alpha$ acting on $\Omega \backslash \{ \alpha \}$ is
        $\komp{n-1}$ for every $\alpha \in \Omega$, let
        $B$ be any non-empty subset of $\Omega$. Taking
        arbitrary $\alpha \in B$ in the hypotheses, we see that $B$ is a base for $G$
        whenever $|B| = n$ and further that $b(G)=n$, so that either $G$ is $\komp{n}$
        or $\emptyset$ is a base for $G$. The latter condition
        implies that $G = 1$, so that $G_\alpha = 1$ is
        $\komp{0}$ and $(G, n) = (1, 0+1)$ is eliminated
        by assumption. \\

        As for the remainder of the lemma, it is clear from
        the definition that there is a unique $\komp{0}$ group:
        namely, the trivial group. A $\komp{1}$ group is then
        a non-trivial group in which every point has
        a trivial stabiliser --- that is, one acting semiregularly.
        Finally, a $\komp{2}$ group $G$ is one in which
        each point stabiliser acts semiregularly on the other points.
        We claim that this criterion implies transitivity, so that it
        characterises Frobenius groups as stated. Indeed,
        at most one orbit of such $G$ can have the property that it
        contains fixed points of more than half the elements of $G$.
        On the other hand, since no orbit $\mathcal{O}$ of $G$ can
        contain only one point --- lest $G = G_{(\mathcal{O})}$ be
        both $\komp{2}$ and $\komp{1}$ ---
        our criterion ensures the transitive action of $G$
        on any $\mathcal{O}$ must be faithful and Frobenius.
        It follows from the Structure Theorem of Frobenius groups
        \cite[\S3.4]{perm_grp} that all orbits have
        the aforementioned property.
    \end{proof}

    \begin{corollary}
        \label{corr:complete}
        Suppose that the finite group $G$ is $\komp{n}$,
        where $n \ge 2$. Then $G$ is $(n-1)$-transitive.
    \end{corollary}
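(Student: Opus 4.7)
My approach is induction on $n$, using \lem{complete} as the essential reduction from $G$ to its point stabilisers. For the base case $n = 2$, \lem{complete} characterises $\komp{2}$ groups as Frobenius, and Frobenius groups are transitive by definition; this gives $1$-transitivity immediately.

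For the inductive step $n \ge 3$, suppose the statement holds for $n - 1$. Given a $\komp{n}$ group $G \le \Sym{(\Omega)}$, \lem{complete} tells us that $G_\alpha$ acting on $\Omega \setminus \{\alpha\}$ is $\komp{n-1}$ for every $\alpha \in \Omega$, so the inductive hypothesis furnishes $(n-2)$-transitivity of $G_\alpha$ on $\Omega \setminus \{\alpha\}$. Since $n - 2 \ge 1$, each such $G_\alpha$ is in particular transitive on $\Omega \setminus \{\alpha\}$, and I would then invoke the standard characterisation that $G$ is $k$-transitive if and only if $G$ is transitive on $\Omega$ and some (equivalently, every) point stabiliser $G_\alpha$ is $(k-1)$-transitive on $\Omega \setminus \{\alpha\}$; applied with $k = n-1$, this closes the induction.

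The one subtlety, which constitutes the only point requiring a little extra care, is verifying that $G$ itself acts transitively on $\Omega$---the inductive hypothesis only supplies transitivity information about the subgroups $G_\alpha$ on the restricted sets $\Omega \setminus \{\alpha\}$. If $G$ were intransitive, then for any fixed $\alpha$ the transitivity of $G_\alpha$ on $\Omega \setminus \{\alpha\}$ would force the $G$-orbits to be exactly $\{\alpha\}$ and $\Omega \setminus \{\alpha\}$; but applying the same reasoning to any $\beta \neq \alpha$ produces the incompatible orbit partition $\{\beta\}$, $\Omega \setminus \{\beta\}$, contradicting $|\Omega| \ge n \ge 3$. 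Once this small point is dispatched, the rest of the argument is entirely mechanical, and I do not anticipate any serious obstacle.
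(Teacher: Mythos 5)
Your proof is correct and follows essentially the same route as the paper: induction on $n$ with the base case supplied by \lem{complete} (Frobenius groups are transitive), the inductive step via the $\komp{n-1}$ property of point stabilisers, and transitivity of $G$ deduced by comparing the orbits of two distinct point stabilisers. The paper phrases the transitivity step positively (the orbits $\Omega \setminus \{\alpha\}$ and $\Omega \setminus \{\beta\}$ overlap and cover $\Omega$) rather than by contradiction, but the content is identical.
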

    \begin{proof}
        The proof is by induction on $n$. The case $n = 2$ is immediate
        from \lem{complete}. Hence, suppose the hypothesis holds
        for fixed arbitrary $n \ge 2$. Let $G \le \Sym{(\Omega)}$ be
        $\komp{n+1}$. By \lem{complete}, any point stabiliser
        $G_\alpha$ of $G$ is $\komp{n}$ in its action
        on $\Omega \backslash \{ \alpha \}$, and hence
        $(n-1) \ge 1$-transitive by the induction hypothesis. Since
        $| \Omega | \ge | b(G) | \ge 2+1$, we can therefore
        take $G_\alpha, G_\beta \le G$ with overlapping orbits
        $\Omega \backslash \{ \alpha \}$ and
        $\Omega \backslash \{ \beta \}$ which cover $\Omega$,
        establishing that $G$ is transitive with $(n-1)$-transitive
        point stabilisers. The result follows by induction.
    \end{proof}

    The multiple transitivity implied by \corr{complete} is
    an extremely powerful restriction on the possibilities
    for $\komp{n}$ groups in the case $n \ge 3$.
    It is well-known that the only $k$-transitive groups
    for $k \ge 6$ are $\sym{k+1}$ and $\alt{k+2}$
    in their natural actions, while all $k \ge 4$ are covered
    upon the addition of the Mathieu groups $\MT{n}$
    ($n \in \{ 11, 12, 23, 24 \}$);
    a complete classification for $k \ge 2$ is provided
    in Dixon and Mortimer \cite[\S7.7]{perm_grp}. As such, it is
    possible to validate the classification of groups
    with complete Saxl hypergraphs given in \thm{complete}.

    \begin{proof}[Proof of \thm{complete}]
        By \lem{complete}, case~\ref{complete:frob} accounts for
        $b(G) = 2$. We may therefore assume $b(G) \ge 3$, so that
        $G$ is one of the $2$-transitive groups in \cite{perm_grp}.
        It will be convenient to proceed by considering
        the various families of such groups in turn. \\

        The alternating groups $\alt{k}$ and symmetric groups
        $\sym{k}$ are sharply $(k-2)$- and $(k-1)$-transitive,
        respectively. It follows that any set of $k-2$ or $k-1$ points
        forms an irredundant base, proving that $\saxh$ is
        indeed complete in case~\ref{complete:alt} above. \\

        As for 2-transitive affine groups $G$, observe that an element
        in $G_0 \cap \ling{d}{q} \neq 1$ which fixes one or more vectors
        in $\vsp{d}{q}$ must also fix any linear combinations
        thereof. If $q \neq 2$, \lem{complete} and its corollary
        thus ensure that $G_{0, v}$ is not
        $\komp{n}$ for any $v \in \vsp{d}{q} \backslash \{ 0 \}$ and
        $n > 0$, so that $G$ is not $\komp{n}$
        for any $n = b(G) > 2$ and the proof is complete. If $q = 2$,
        a similar analysis with $G_{0, v, w}$ for $v \neq w \neq 0
            \in \vsp{d}{q}$ shows that $G$ cannot be
        $\komp{b(G)}$ if $b(G) > 3$. This proof is complete as well,
        as all 2-transitive $G$ with socle $\vsp{d}{2}$ for which
        $b(G) = 3$ fall under one of the cases listed in the theorem:

        \begin{itemize}
            \item If $\lins{d}{2} \le G_0 \le \gammal{d}{2}
                = \lins{d}{2}$, then either $d > 2$ ---
                in which case $G_0$ contains non-trivial automorphisms
                that fix $3 - 1 = 2$ vectors in $\vsp{d}{2}$ ---
                or $G \in \{ \cyc{2}, \sym{4} \}$, covered
                by case~\ref{complete:alt} above. \\
            \item If $\symp{d}{2} \le G_0 \le \gammal{d}{2}$,
                then either $d > 2$ --- in which case $G_0$ again
                contains 
                %\antt{Fix two complementary basis vectors,
                %swap two}{
                a non-trivial automorphism that fixes
                $3 - 1 = 2$ vectors in $\vsp{d}{2}$ --- or
                $d = 2$ and $G \cong \sym{4}$. \\
            \item Otherwise, $(d, G_0)$ is one of $(4, \sym{6})$,
                $(4, \alt{6}{:}\cyc{2})$, $(4, \alt{7})$,
                $(6, \psigmau{3}{3} \cong \xlie{G}{2}{2})$, and
                $(6, \psu{3}{3})$. Computations confirm
                none of these correspond to $\komp{3}$
                groups.
        \end{itemize}

        Turning now to projective linear groups, observe that
        the stabiliser of two 1-dimensional subspaces leaves
        the plane they span invariant. In a space of $d\geq 3$ dimensions, this implies that such a stabiliser
        is not transitive on the remaining subspaces, and therefore
        by \corr{complete} that
        $\psl{d}{q}$ or any containing group is not
        $\komp{n}$ for any $n \ge 4$. On the other hand, three subspaces
        % \antt{Elements dilate them or shift a third lin. ind. subspace}{are
        % insufficient} 
        are insufficient to form a base
        for such a group, so that $b(G) \ge 4$. In the case $d = 2$,
        by contrast, the group $\pgl{2}{q}$ is 
        % \antt{map
        % a basis to a basis, with the third image fixing the ratio
        % of the scale factors for the basis vectors}{sharply
        % 3-transitive} 
        sharply 3-transitive and thus has complete 2- or 3-uniform
        $\saxh$. The same holds for $\psl{2}{q}$
        by part~\ref{lem21:subg} of \lem{lem21}, since this group
        contains a non-trivial element fixing a pair of complementary 1-dimensional subspaces whenever $q > 2$.
        It does not apply to either of these groups with the addition of
        standard field automorphisms, however, as the stabiliser of
        the 1-dimensional subspaces spanned by 2 vectors and their sum
        then fixes the linear combinations of those vectors
        with coefficients in the fixed field, but not others.
        The only other possibility
        in light of the fact that
        $|\pgl{2}{q} : \psl{2}{q}|  \le 2$
        is to extend $\psl{2}{q}$ with the composition of
        an involutory field automorphism and some
        $x \in \pgl{2}{q} \backslash \psl{2}{q}$. We hence
        obtain case~\ref{complete:lin}. \\

        For the 2-transitive actions of symplectic groups $\symp{2m}{2}$, the 2-point stabilisers have precisely two orbits
        on the remaining points, with lengths $2^{2m-2}$ and
        $2^{2m-2} \pm 2^{m-1}$ (see e.g. \cite[p.69]{kantor}). Neither action therefore gives rise
        to a complete Saxl hypergraph $\saxh$. \\

        To eliminate groups $G$ with socle $\psu{3}{q}$ acting
        on isotropic subspaces, first fix $q$ and a generator
        $\zeta$ of the multiplicative group $\mathbb{F}_{q^2}^{\times}$.
        The plane spanned by two linearly independent isotropic vectors
        $x, y$ then contains a third isotropic vector
        $x + \zeta^{\frac{q+1}{(2, q+1)}} \langle x, y \rangle y$.
        It follows by the same argument as for the projective linear
        groups that we need only consider the possibility that $G$
        is $\komp{3}$; since the stabiliser in $\psu{3}{q}$ of
        three coplanar 1-dimensional subspaces is non-trivial,
        this is not so. \\

        Turning to the Suzuki and Ree groups $\suz{2^{2n+1}}$ and
        $\ree{3^{2n+1}}$, it is noted
        in \cite{perm_grp} that there are permutation representations
        on the sets $\mathbb{F}_q^2 \cup \{ \infty \}$ (where $q = 2^{2n+1}$)
        and $\mathbb{F}_q^3 \cup \{ \infty \}$ (where $q = 3^{2n+1}$),
        respectively, with the pointwise stabilisers of
        $\{ 0, \infty \}$ consisting of the maps given by

        \[ n_\kappa : \, (\eta_1, \eta_2) \mapsto (
            \kappa \eta_1, \kappa^{\sqrt{2q} + 1} \eta_2
        ) \textrm{ or} \] \[ (\eta_1, \eta_2, \eta_3) \mapsto (
            \kappa \eta_1, \kappa^{\sqrt{3q} + 1} \eta_2,
            \kappa^{\sqrt{3q} + 2} \eta_3
        ) \]

        and $\infty \mapsto \infty$
        for each $\kappa \in \mathbb{F}_q^{\times}$. The corresponding
        stabilisers in their almost simple normalisers arise through
        the addition of standard field automorphisms $f \in \Aut{(\mathbb{F}_q)}$. It
        follows that any group with socle a Suzuki or Ree group has
        a base size of 3, for if $\zeta$ is
        a generator of $\mathbb{F}_q^{\times}$, the only map
        of the form $n_k f$ which satisfies

        \[ (1, \zeta) = n_\kappa f ((1, \zeta)) = (
            \kappa f(1), \kappa^{\sqrt{2q} + 1} f(\zeta)
        ) \textrm{ or} \] \[ (1, 1, \zeta) = n_\kappa f ((1, 1, \zeta)) = (
            \kappa f(1), \kappa^{\sqrt{3q} + 1} f(1),
            \kappa^{\sqrt{3q} + 2} f(\zeta)
        ) \]

        is the identity $n_1$. On the other hand, since

        \[ n_{-1} ((0, 1, 0)) = (-0, (-1)^{\sqrt{3q} + 1} \cdot 1, (-1)^{\sqrt{3q}+2} \cdot 0)
            = (0, 1, 0) \]

        for odd $q$, the 2-point stabiliser of a Ree group
        does not act semiregularly
        on the remaining points, so that these groups are not $\komp{3}$.
        The same holds for a Suzuki group extended
        with a field automorphism $f$, as $f ((1, 0)) = (f(1), f(0))$,
        but not for the Suzuki groups themselves: the existence
        of non-zero $(x, y) \in \mathbb{F}_q^2$ for which
        $n_\kappa ( (x, y) ) = (x, y)$ yields either $\kappa = x/x = 1$
        or $\kappa = (\kappa^{\sqrt{2q}+1})^{\sqrt{2q} - 1} =
        (y/y)^{\sqrt{2q} - 1} = 1$. This is precisely
        case~\ref{complete:lie_exc}. \\

        Finally, the sporadic 2-transitive groups can
        be examined computationally using the fact that,
        by \corr{complete}, a $\komp{n}$ group is
        sharply $n$-transitive or $(n-1)$-transitive
        with a semiregular $(n-1)$-point stabiliser. This
        produces the list in case~\ref{complete:spor}.
    \end{proof}

    \section{\label{sec:cnc}Connectedness properties}

    \lem{2sec} ensures that results on connectedness and diameter apply
    to $\saxg$ and $\saxh$ equally, as does a straightforward generalisation of
    the Common Neighbour Conjecture (\cnj{cnc}). We consider two potential avenues
    for further investigation of Saxl hypergraphs suggested by this conjecture.

    \subsection{\label{sec:edge-dis}Edge disjointness conditions}

    One option for strengthening the generalised Common Neighbour Conjecture
    (CNC) arises from a more explicit formulation thereof:

    \begin{conjecture}[Generalised CNC]
        \label{cnj:cnc-exp}
        If $G \le \Sym{(\Omega)}$ is a finite permutation group of
        base size $b(G) \ge 2$, then for any $\alpha \neq \beta \in \Omega$,
        there exist a third point $\gamma \neq \alpha, \beta
            \in \Omega$ and edges $E_\alpha, E_\beta$ of $\saxh$
        (or $\saxg$, in the original context) such that
        $\alpha, \gamma \in E_\alpha$ and $\beta, \gamma \in E_\beta$.
    \end{conjecture}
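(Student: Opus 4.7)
The plan is to exploit the identification, established in \lem{2sec}, that a common neighbour of $\alpha$ and $\beta$ in $\saxh$ is precisely a common neighbour in its $2$-section $\saxg$. In particular, the existence of $\gamma, E_\alpha, E_\beta$ as required by \cnj{cnc-exp} is equivalent to $\gamma$ being a common neighbour of $\alpha$ and $\beta$ in $\saxg$. Thus \cnj{cnc-exp} reduces to the generalised form of the CNC for the generalised Saxl graph --- precisely the hypothesis of \thm{cnc-edge-disj-6} --- so any attack on it follows the same template as attacks on \cnj{cnc}.

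As a first step, I would restrict to primitive $G$, since intransitive groups can easily harbour counterexamples (a fixed point of $G$ admits no neighbours, so any vertex paired with it cannot share one). Under primitivity, \lem{lem21} ensures $\saxh$ is vertex-transitive and $b(G)$-uniform, so the valency of each vertex is a well-defined invariant $v$. A counting argument then gives an initial reduction: if $v > (|\Omega|-2)/2$, pigeonhole immediately forces a common neighbour for any pair $\alpha, \beta$. When this easy regime fails, I would stratify by O'Nan--Scott type and handle each in turn, drawing on the existing case-by-case progress on \cnj{cnc}: Burness--Huang for the almost simple case with $\psl{2}{q}$-socle or soluble point stabiliser, Lee--Popiel for most affine groups with sporadic stabilisers, the product-type analysis of \cite{prodsoltab}, and so on. In each type, one studies the regular suborbits of a point stabiliser $G_\alpha$: the neighbours of $\alpha$ in $\saxh$ are exactly those $\gamma$ such that $\{\alpha, \gamma\}$ extends to a minimum base. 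Showing that the neighbour sets of two arbitrary vertices always overlap is the recurring technical task.

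The main obstacle is that this conjecture is genuinely open: any unconditional proof would resolve \cnj{cnc} itself, which has resisted the combined efforts of many authors over the past decade. The hardest layer is the almost simple classical case in non-standard actions, where sharp fixed-point-ratio estimates are needed but the existing bounds degrade for low-rank groups. A more tractable target --- and, I suspect, the strategy pursued in the paper's \thm{cnc-edge-disj-6} --- is the conditional form: assuming the generalised CNC for $\saxg$, \cnj{cnc-exp} follows immediately from \lem{2sec}, so the real effort can instead be redirected to strengthening the conclusion to the edge-disjoint formulation of \cnj{conj_1.4}, where the hypergraph structure becomes genuinely useful and an upgrade beyond the graph-theoretic CNC is actually plausible.
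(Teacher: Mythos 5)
This statement is a conjecture, and the paper offers no proof of it; you correctly recognise that it is open and that any unconditional proof would subsume the original CNC of Burness and Giudici. Your central reduction via \lem{2sec} --- that common neighbours in $\saxh$ and $\saxg$ coincide, so the hypergraph formulation is equivalent to the generalised Saxl graph version --- is exactly the observation the paper itself makes in \S\ref{sec:goss_int} and \S\ref{sec:cnc}, and the rest of your proposal (stratification by O'Nan--Scott type, the known partial results, the conditional strategy feeding into \thm{cnc-edge-disj-6}) accurately reflects how the paper treats this conjecture as a hypothesis rather than a result.
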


    In the case of a hypergraph, one may further sensibly ask whether
    $E_\alpha$ and $E_\beta$ can be chosen as distinct edges, or indeed
    to intersect in \textsl{only} $\gamma$. \\

    The addition of the weaker first requirement is easily demonstrated
    to not logically strengthen the conjecture. It will in fact be
    convenient to prove slightly more.

    \begin{theorem}
        \label{thm:cnc-edge-dist}
        Suppose that a finite group $G \le \Sym{(\Omega)}$, vertices
        $\alpha, \beta \in \Omega$, and edges $E_\alpha, E_\beta$
        of $\saxh$ satisfy the conditions of \cnj{cnc-exp}. Then
        there are edges $E_\alpha', E_\beta'$ of $\saxh$ satisfying
        the same conditions such that
        $\alpha, \beta \notin E_\alpha' \cap E_\beta'$ and
        $| E_\alpha' \cap E_\beta' | \le | E_\alpha \cap E_\beta |$.
        In particular, we may take $E_\alpha \neq E_\beta$
        in the conjecture.
    \end{theorem}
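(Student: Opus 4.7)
The plan is to proceed by a short case analysis on whether $\beta \in E_\alpha$ and/or $\alpha \in E_\beta$, together with a single ``base replacement'' construction driven by the minimality of $b(G)$. If neither containment holds, then $\alpha, \beta \notin E_\alpha \cap E_\beta$ already, so I would simply set $E_\alpha' := E_\alpha$ and $E_\beta' := E_\beta$ and be done.

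The main construction handles the case $\beta \in E_\alpha$. Since $E_\alpha \setminus \{\beta\}$ has size $b(G) - 1$, its pointwise stabiliser $K := G_{E_\alpha \setminus \{\beta\}}$ must be nontrivial, otherwise we would have a smaller base. Pick any $g \in K \setminus \{1\}$; because $E_\alpha$ itself is a base, $g$ cannot fix $\beta$, so $g\beta \neq \beta$. Then $E_\alpha^g := g \cdot E_\alpha$, being the $G$-image of a base, is an edge of $\saxh$, and equals $(E_\alpha \setminus \{\beta\}) \cup \{g\beta\}$; it contains $\alpha$ and $\gamma$ (both distinct from $\beta$) but not $\beta$. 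A routine count shows $|E_\alpha^g \cap E_\beta| \le |E_\alpha \cap E_\beta|$, since the intersection loses $\beta$ and gains at most the single element $g\beta$.

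If $\alpha \notin E_\beta$, we are finished by taking $E_\beta' := E_\beta$. Otherwise, one applies the symmetric construction to $E_\beta$: pick $h \in G_{E_\beta \setminus \{\alpha\}} \setminus \{1\}$ and set $E_\beta' := h \cdot E_\beta$, an edge containing $\beta, \gamma$ but not $\alpha$. The main obstacle I foresee is here, because a priori two simultaneous modifications could add both $g\beta$ and $h\alpha$ to the intersection and violate the size bound. The resolution is a sequential accounting: $\alpha$ still lies in $E_\alpha^g \cap E_\beta$ (since $\alpha \ne \beta$ keeps $\alpha$ in $E_\alpha^g$, and $\alpha \in E_\beta$ by hypothesis), so applying the same counting lemma to the second replacement gives $|E_\alpha^g \cap E_\beta^h| \le |E_\alpha^g \cap E_\beta| \le |E_\alpha \cap E_\beta|$.

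The ``in particular'' statement then comes for free: if $E_\alpha' = E_\beta'$, their intersection would contain both $\alpha \in E_\alpha'$ and $\beta \in E_\beta'$, contradicting the property $\alpha, \beta \notin E_\alpha' \cap E_\beta'$ that we have just established.
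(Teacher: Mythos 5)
Your proposal is correct and follows essentially the same argument as the paper: use the minimality of $b(G)$ to find a nontrivial element fixing $E_\alpha \setminus \{\beta\}$ (resp.\ $E_\beta \setminus \{\alpha\}$) pointwise, translate the edge to expel the offending point, and do the sequential count $|E_\alpha' \cap E_\beta'| \le |E_\alpha' \cap E_\beta| \le |E_\alpha \cap E_\beta|$, noting that the lost point $\beta$ (resp.\ $\alpha$) genuinely lay in the intersection while at most one new point is gained. The only difference is the order in which the two edges are modified, which is immaterial.
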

    \begin{proof}

        Note that either
        $\alpha \in E_\beta$ or $\alpha \notin E_\beta$.
        In the latter case, set $E_\beta' = E_\beta$.
        Otherwise, since $E_\beta$ is a base for $G$ of
        minimal possible order, there exists $g \in G$
        which fixes $E_\beta \backslash \{ \alpha \}$ pointwise,
        but not $\alpha$. The set $E_\beta' = E_\beta^g$ thus
        forms a base for $G$ of size $b(G)$ --- an edge of $\saxh$
        --- which contains $\beta$ and $\gamma \neq \alpha$, but not $\alpha$.
        Moreover, the fact that $\alpha \in E_\alpha$
        ensures $| E_\alpha \cap E_\beta' | = | (
            E_\alpha \cap (E_\beta \cup \{ \alpha^g \}
        )) \backslash \{ \alpha \} | \le | E_\alpha \cap E_\beta |$.
        An analogous argument with $E_\alpha, E_\beta'$ yields
        $E_\alpha'$; the result follows from the fact that
        $|E_\alpha' \cap E_\beta'| \le |E_\alpha \cap E_\beta'| \le
            |E_\alpha \cap E_\beta|$
        and $\alpha \notin E_\beta'$, $\beta \notin E_\alpha'$.

                The final claim follows from the fact that
        $\alpha \notin E_\alpha' \cap E_\beta'$, whereas
        $\alpha \in E_\alpha'$. 
    \end{proof}

    As for the stronger second requirement, the straightforward
    implication that $2b(G) - 1 = |E_\alpha| + |E_\beta| -
        |E_\alpha \cap E_\beta| = |E_\alpha \cup E_\beta| \le
        |\Omega|$ yields an easy bound
    $b(G) \le \frac{\left| \Omega \right| + 1}{2}$, from which it follows
    that the symmetric and alternating groups of degree at least 4 and 6
    (respectively) are counterexamples in their natural actions.
    On the other hand,  a result of Halasi
    \etal\cite[Corollary~1.3]{pyber_exp} states that any other
    primitive group of degree $n$ has a base of size
    at most $\max{\{ \sqrt{n}, 25 \}}$; given that $\sqrt{n} \le (n+1)/2$ and
    that computational experiments with the GAP Primitive Groups Library
    \cite{primgrp} show there are no further counterexamples of degree
    less than $25 \cdot 2 - 1 = 49$, we conjecture that these are
    the only counterexamples. The following is a more explicit version of \cnj{conj_1.4}.

    \begin{conjecture}
        \label{cnj:cnc-edge-disj}
        For any finite primitive group $G \le \Sym{(\Omega)}$ of
        base size at least 2, exactly one of the following holds:
        \begin{itemize}
            \item $G$ is $\sym{n}$ in its natural action, where
                $n \ge 4$;
            \item $G$ is $\alt{n}$ in its natural action, where
                $n \ge 6$; or
            \item Given any $\alpha \neq \beta \in \Omega$, there
                exist edges $E_\alpha, E_\beta$ of
                $G$'s Saxl hypergraph $\saxh$ such that
                $\alpha \in E_\alpha$, $\beta \in E_\beta$,
                and $|E_\alpha \cap E_\beta| = 1$.
        \end{itemize}
    \end{conjecture}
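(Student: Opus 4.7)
The plan is to establish the trichotomy in two halves: first, verify that the named exceptions genuinely fail the intersection condition, and second, establish the condition for all other primitive $G$ with $b(G) \geq 2$. For the exceptions, any two edges of $\saxh$ satisfy
\[ |E_\alpha \cap E_\beta| = |E_\alpha| + |E_\beta| - |E_\alpha \cup E_\beta| \geq 2b(G) - |\Omega|, \]
which is $\geq n-2 \geq 2$ for $\sym{n}$ ($n \geq 4$) and $\geq n-4 \geq 2$ for $\alt{n}$ ($n \geq 6$) in their natural actions. Hence neither exception admits edges meeting in exactly one point, so the three cases are mutually exclusive.

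For the principal direction I would assume the generalised CNC holds for $\saxg$, following the paper's \thm{cnc-edge-disj-6} which covers $b(G) \leq 7$. To reduce to this regime, invoke Halasi \etal's bound $b(G) \leq \max\{\sqrt{|\Omega|}, 25\}$, which gives $2b(G) - 1 \leq |\Omega|$ for $|\Omega| \geq 49$, and dispatch primitive groups of degree $< 49$ computationally via the GAP Primitive Groups Library. Given $\alpha \neq \beta$, the generalised CNC yields edges $E_\alpha, E_\beta$ of $\saxh$ sharing a common vertex $\gamma$; by \thm{cnc-edge-dist} we may further assume $\alpha, \beta \notin E_\alpha \cap E_\beta$ and that $|E_\alpha \cap E_\beta|$ is minimum subject to these constraints.

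The heart of the argument is an iterative reduction. If $|E_\alpha \cap E_\beta| \geq 2$, choose $\delta \in (E_\alpha \cap E_\beta) \setminus \{\gamma\}$. Minimality of $E_\beta$ as a base yields a non-trivial $g \in G_{(E_\beta \setminus \{\delta\})}$, and the edge $E_\beta^g$ still contains $\beta$ (as $\beta \neq \delta$) with
\[ |E_\alpha \cap E_\beta^g| = |E_\alpha \cap E_\beta| - 1 + \ind{\delta^g \in E_\alpha}. \]
Strict reduction fails only if the entire orbit of $\delta$ under $G_{(E_\beta \setminus \{\delta\})}$ is contained in $E_\alpha$. Combining this with the symmetric statement obtained by swapping the roles of $E_\alpha$ and $E_\beta$, one aims to argue that such a ``trapped orbit'' configuration is incompatible with primitivity of $G$ when $G \not\geq \alt{\Omega}$, producing the sought contradiction and hence $|E_\alpha \cap E_\beta| = 1$.

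The main obstacle I expect is converting the trapped-orbit conclusion into a contradiction uniformly across all primitive types. For $b(G) \leq 7$ (the range the paper covers) I would proceed through the O'Nan--Scott classification, exploiting explicit base-size data and point-stabiliser structure in each family (affine, almost simple, diagonal, product, twisted wreath); pushing beyond $b(G) = 7$ towards full generality likely demands fresh techniques tailored to each type, particularly for the almost simple families where 2-point stabilisers can have very restrictive orbit structure on $\Omega$, and is the step that I expect to resist direct attack.
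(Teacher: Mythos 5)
The statement you are addressing is a conjecture: the paper does not prove it, and offers only the conditional partial result Theorem~\ref{thm:cnc-edge-disj-6}, itself contingent on the generalised CNC (Conjecture~\ref{cnj:cnc-exp}). Your treatment of the exceptional cases is correct and matches the paper's: the counting bound $|E_\alpha \cap E_\beta| \ge 2b(G) - |\Omega|$ shows that $\sym{n}$ ($n\ge 4$) and $\alt{n}$ ($n \ge 6$) in their natural actions cannot satisfy the third bullet, so the trichotomy is exclusive. Your outline of the positive direction also correctly reproduces the paper's set-up: assume the generalised CNC, apply Theorem~\ref{thm:cnc-edge-dist}, minimise $|E_\alpha\cap E_\beta|$, and observe that minimality traps the relevant orbit of a point of $E_\alpha\cap E_\beta$ under a pointwise stabiliser inside the other edge.

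There are, however, two genuine gaps. First, your claimed reduction to $b(G)\le 7$ is false: the bound $b(G)\le\max\{\sqrt{|\Omega|},25\}$ of Halasi \etal only guarantees the necessary counting condition $2b(G)-1\le|\Omega|$; it does not bound $b(G)$ by any constant, and there are primitive groups of arbitrarily large degree with base size well above $7$ (standard actions of $\sym{m}$ on $k$-subsets, product-action groups such as $\sym{m}\wr\sym{2}$, the $2$-transitive actions of $\symp{2m}{2}$; cf.\ Table~\ref{tbl:detritus}). So a computation below degree $49$ together with the $b(G)\le 7$ case does not cover all primitive groups, and the paper itself must add the separate O'Nan--Scott-type hypotheses (ii)--(vii) of Theorem~\ref{thm:cnc-edge-disj-6} to widen its coverage. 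Second, and more fundamentally, the step you describe as ``one aims to argue that such a trapped-orbit configuration is incompatible with primitivity'' is where all the mathematical content lies: the paper's Lemma~\ref{lem:ced6-6} extracts a contradiction only after constructing, via the commutator device of Lemma~\ref{lem:ced6-pre}, a subgroup of $G$ inducing an alternating group on a set of at least three points while fixing everything else, and then using primitivity of $G$ to force $G\ge{\rm Alt}(\Omega)$; this requires a three-way case division on whether $|G_{(E_\beta\setminus\{\gamma\})}|$ is prime and on the sizes of the auxiliary sets $S_\alpha,S_\beta$, and it uses $b(G)\le 7$ essentially (the partition count $4+1+2=7$ forces equality everywhere). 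Without supplying this construction your sketch does not establish even the conditional partial result, let alone the unconditional conjecture, which remains open.
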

    \begin{remark}
        \label{rmk:altd}
        The first two cases of 
         \cnj{cnc-edge-disj}
        cover all symmetric and alternating groups in their natural actions except 
%$\sym{2}$,  $\alt{3}$,
        $\sym{3}$,
        $\alt{4}$, and $\alt{5}$, where sharp
        %1-,  1-, 
        2-, 2-, and
        3-transitivity (respectively) make it straightforward
        to verify the conjecture.
        % Note that, per the above discussion, \cnj{cnc-edge-disj}
        % definitely holds for all alternating and symmetric groups
        % in their natural actions. In fact, the first two cases
        % cover all but $\sym{2}$, $\sym{3}$, $\alt{3}$,
        % $\alt{4}$, and $\alt{5}$, where sharp 1-, 2-, 1-, 2-, and
        % 3-transitivity (respectively) make it straightforward
        % to verify the conjecture.
    \end{remark}

    We now present some strong evidence for \cnj{cnc-edge-disj}. Recall that a primitive action of
    an almost simple group $G$ is called \textsl{standard}
    if $G$ has socle $\alt{k}$ and permutes subsets or partitions
    of $\{ 1, \ldots , k \}$, or $G$ is a classical group acting
    on subspaces or complementary pairs of subspaces.

    \begin{theorem}
        \label{thm:cnc-edge-disj-6}
        Let $G$ and $\Omega$ satisfy \cnj{cnc-exp}
        (the generalised CNC). If
        any of the following apply, they further satisfy
        \cnj{cnc-edge-disj}:
        \begin{enumerate}
            \item $b(G) \le 7$; \label{ced6:base_size}
            \item $G$ is of affine type; \label{ced6:aff}
            \item $G$ is an almost simple group in
                a non-standard action; or \label{ced6:simp}
            \item $G$ is a group of diagonal type with socle $T^k$ and
                top group $P \notin \{ \alt{k}, \sym{k} \}$,
                or $k < |T|^6 - 1$. \label{ced6:diag}
            \item $G$ is of product type, with its base group
                almost simple in a non-standard action and its
                top group soluble or primitive (but not
                a symmetric or alternating group in its natural action);
                \label{ced6:prod-simp}
            \item $G$ is of product type, possesses a soluble or primitive
                top group which is not a symmetric or alternating group
                in its natural action, has a base group $L$
                of diagonal type with socle $T^k$, and either
                $k < |T|^5 - 1$ or the base group of $L$
                is not one of $\alt{k}, \sym{k}$; or
                \label{ced6:prod-diag}
            \item $| \Omega | \le 128$. \label{ced6:omega}
        \end{enumerate}
    \end{theorem}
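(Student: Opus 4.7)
The plan is to begin with edges $E_\alpha, E_\beta$ afforded by the generalised CNC (\cnj{cnc-exp}) and to invoke \thm{cnc-edge-dist} to arrange that $\alpha, \beta \notin I := E_\alpha \cap E_\beta$ while some $\gamma \in I$ survives and $|I|$ is as small as that theorem allows. If $|I| = 1$, the third bullet of \cnj{cnc-edge-disj} holds and we are done; otherwise, the natural strategy is to reduce $|I|$ iteratively by one, through modifications in the spirit of the argument for \thm{cnc-edge-dist}.

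The core operation is as follows: given $\delta \in I \setminus \{\gamma\}$, let $H_\delta$ be the pointwise stabiliser in $G$ of $E_\alpha \setminus \{\delta\}$. Since $E_\alpha$ is a minimal base, $|H_\delta| \ge 2$ and $H_\delta$ acts semiregularly on $\delta^{H_\delta}$, so $|\delta^{H_\delta}| = |H_\delta|$. If some $g \in H_\delta$ satisfies $\delta^g \notin E_\beta$, then $E_\alpha^g$ is still an edge of $\saxh$, contains $\alpha$ and $\gamma$ but not $\delta$, and has $|E_\alpha^g \cap E_\beta| = |I| - 1$. A clean sufficient condition is $|H_\delta| > b(G)$, since then $\delta^{H_\delta}$ cannot sit inside $E_\beta$. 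Analogous modifications on the $\beta$-side, iteration over the choice of $\delta \in I$, and simultaneous modification of both edges all expand the scope of situations in which this step succeeds.

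For the seven listed conditions I would proceed as follows. Case~\ref{ced6:base_size} with $b(G) \le 7$ is the principal case: here $|E_\alpha \cup E_\beta| \le 14$, and outside a finite range of small degrees the available stabilisers are ample enough for the orbit argument. Cases~\ref{ced6:simp}, \ref{ced6:prod-simp} and~\ref{ced6:prod-diag} reduce to~\ref{ced6:base_size} by invoking the bound $b(G) \le 7$ for non-standard almost simple actions proved in \cite{basecls, basesym, basespor, basesim} together with the analogous consequences for product-type groups drawn from these and~\cite{prodsoltab}. Case~\ref{ced6:diag} handles diagonal type by controlling $b(G)$ either through the hypothesis $k < |T|^6 - 1$ or through the restriction on the top group $P$, both of which feed into~\ref{ced6:base_size}. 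Case~\ref{ced6:aff} (affine) exploits the irreducibility of $G_0 \le \ling{d}{q}$ on $\vsp{d}{q}$ to give lower bounds on $|\delta^{H_\delta}|$. Case~\ref{ced6:omega} is settled computationally using GAP and the Primitive Groups Library~\cite{primgrp} for the finitely many primitive groups of degree at most $128$.

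The principal obstacle is controlling $|H_\delta|$ when the stabiliser is small and ensuring that $\delta^{H_\delta}$ is not accidentally absorbed by $E_\beta$. When the straightforward orbit bound fails, one must vary the choice of $\delta$, modify $E_\alpha$ and $E_\beta$ in tandem, or descend to type-specific structural arguments --- for example, using the explicit description of stabilisers in affine or diagonal groups. I expect the most delicate balancing to occur in case~\ref{ced6:base_size} with $b(G)$ near $7$ and $|\Omega|$ moderate, where the computational umbrella of~\ref{ced6:omega} will play an essential role in disposing of the residual small cases.
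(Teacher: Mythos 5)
Your reduction of cases~\ref{ced6:simp}--\ref{ced6:prod-diag} to case~\ref{ced6:base_size} via the known base-size bounds matches the paper, and your starting point (invoke \cnj{cnc-exp}, apply \thm{cnc-edge-dist}, then try to shrink $|E_\alpha \cap E_\beta|$ by replacing $\delta \in E_\alpha \cap E_\beta$ with a point of the regular orbit of $H_\delta = G_{(E_\alpha \backslash \{\delta\})}$) is also the right opening move. But the entire difficulty of the theorem lives exactly where you wave your hands: when $\delta^{H_\delta}$ (or, in the paper's formulation, $\gamma^{G_{(E_\beta\backslash\{\gamma\})}}$) is absorbed into the other edge. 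Your sufficient condition $|H_\delta| > b(G)$ has no reason to hold --- the pointwise stabiliser of all but one point of a minimal base can have order $2$ regardless of the degree --- and ``outside a finite range of small degrees the available stabilisers are ample enough'' is not an argument. The paper's proof of case~\ref{ced6:base_size} (Lemma~\ref{lem:ced6-6}) instead chooses $E_\alpha, E_\beta$ with \emph{minimal} intersection size $\ge 2$, deduces that every point of a regular orbit of $G_{(E_\beta\backslash\{\gamma\})}$ must lie inside $E_\alpha$, and then exploits this trapped configuration via a commutator construction (Lemma~\ref{lem:ced6-pre}): commutators $[h,k]$ of suitable stabiliser elements induce $3$-cycles while fixing everything else, generating a subgroup acting as an alternating group on a block, which by primitivity forces $G \supseteq {\rm Alt}(\Omega)$ or yields a nontrivial element fixing a base. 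This is precisely why $\sym{n}$ and $\alt{n}$ appear as exceptions in \cnj{cnc-edge-disj}; your proposal contains no mechanism that would produce these exceptions or rule everything else out, so the core of the proof is missing. The case analysis in the paper (prime stabiliser order, then $|S_\beta| \le 1$, then the tight count $4+1+2=7$ that uses $b(G)\le 7$) is delicate and cannot be replaced by ``vary $\delta$ or modify both edges in tandem.''

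Two smaller discrepancies: for the affine case~\ref{ced6:aff}, the paper does not bound orbit sizes via irreducibility; it works in the translated base $B$ of linearly independent vectors for $G_0$, extracts a maximal linearly independent subset avoiding $E_v$, and either produces directly a base meeting $E_v$ in one point or derives a dimension count forcing $b(G) \le 3$ (whence Lemma~\ref{lem:ced6-6} applies). For case~\ref{ced6:omega}, the paper does not brute-force all primitive groups of degree at most $128$: it uses the Moscatiello--Roney-Dougal bound $b(G) < \log_2 |\Omega| + 1$ to push almost everything into case~\ref{ced6:base_size}, computing only the handful of exceptions in Table~\ref{tbl:detritus} (and even there, only one representative $\beta$ per suborbit). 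Your plan for~\ref{ced6:omega} is workable in principle but would be substantially heavier.
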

  \begin{table}[h!]
\caption{\label{tbl:detritus}Primitive groups $G$ of degree
            at most than 128 for which we do not verify \cnj{cnc-edge-disj}.}
        \renewcommand{\arraystretch}{1.2}
        \begin{tabular}{llp{3.28cm}l}
        \toprule
            $G$ & $| \Omega |$ & $b(G)$ & Comments \\ \midrule
            $\symp{8}{2}$ & $2^7 - 2^3$ & 8 & 2-transitive action on cosets
                of ${\rm GO}_8^- (2)$ \\ 
            $G \le \sym{m} \wr \sym{2}$ & $m^2$ & $m$ ($G = \sym{m} \wr \sym{2}$),
                $m - 1$ (otherwise) & $8 \le m \le 11$; $G$
                contains $\alt{m} \wr \sym{2}$ \\
            $\alt{m}, \sym{m}$ & $\binom{m}{2}$ & $\lceil \frac{2(m-1)}{3} \rceil$
                ($G = \sym{m}$), $\lceil \frac{2(m - 2)}{3} \rceil$ ($G = \alt{m}$) &
                $12 \le m \le 16$\\
            \bottomrule
        \end{tabular}
    \end{table}

    The proof will be presented in stages, the first of which
    relies on the following lemma.
    % \begin{lemma}
    %     \label{lem:ced6-pre}
    %     \melt{Let's reword this-- Melissa}{Suppose $G \le \Sym{(\Omega)}$ is a permutation group
    %     with subgroups $H, K \le G$. Let the point $\delta \in \Omega$
    %     and non-empty sets
    %     $M_K \subseteq F_H \subseteq \Omega \backslash \{ \delta \}$,
    %     $M_H \subseteq F_K \subseteq \Omega \backslash \{ \delta \}$
    %     be such that $H$ (respectively, $K$) fixes $F_H$ (resp.\ $F_K$)
    %     pointwise and leaves each of $M_H \cup \{ \delta \}$ and
    %     $F_K \cup \{ \delta \}$ (resp.\ $M_K \cup \{ \delta \}$ and
    %     $F_H \cup \{ \delta \}$) invariant,
    %     acting transitively on $M_H \cup \{ \delta \}$
    %     (resp.\ $M_K \cup \{ \delta \}$). Then there exists some $L \le G$
    %     which induces all even permutations of
    %     $M_H \cup M_K \cup \{ \delta \}$ while fixing
    %     the remaining points of $F_H \cup F_K \cup \{ \delta \}$.}
    % \end{lemma}
    \begin{lemma}
\label{lem:ced6-pre}
Suppose $G \le \Sym(\Omega)$ is a permutation group with subgroups $H, K \le G$.
Let $\delta \in \Omega$, and let the sets
\[
M_K \subseteq F_H \subseteq \Omega \setminus \{\delta\}, \quad
M_H \subseteq F_K \subseteq \Omega \setminus \{\delta\}
\]
be non-empty and satisfy the following conditions:
\begin{enumerate}
    \item $H$ fixes every point in $F_H$;
    \item $K$ fixes every point in $F_K$;
    \item $H$ leaves both $M_H \cup \{\delta\}$ and $F_K \cup \{\delta\}$ invariant, and acts transitively on $M_H \cup \{\delta\}$;
    \item $K$ leaves both $M_K \cup \{\delta\}$ and $F_H \cup \{\delta\}$ invariant, and acts transitively on $M_K \cup \{\delta\}$.
\end{enumerate}
Then there exists a subgroup $L \le G$ that acts as the alternating group on $M_H \cup M_K \cup \{\delta\}$ and fixes every point in $F_H \cup F_K \cup \{\delta\}$ otherwise.
\end{lemma}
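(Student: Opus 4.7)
The plan is to set $\Omega' := M_H \cup M_K \cup \{\delta\}$ and to take $L := [H, K]$, the commutator subgroup. A preliminary observation is that the hypotheses force $M_H$, $M_K$, and $\{\delta\}$ to be pairwise disjoint: since $H$ fixes $F_H \supseteq M_K$ pointwise while acting transitively on the nontrivial set $M_H \cup \{\delta\}$, no element of $M_H$ can lie in $F_H$, so in particular $M_H \cap M_K = \emptyset$, and symmetrically $M_K \cap F_K = \emptyset$. Moreover, $H$ fixes $M_K$ pointwise and leaves $M_H \cup \{\delta\}$ invariant, hence leaves $\Omega'$ invariant; the same holds for $K$. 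All commutators $[h,k]$ with $h\in H$, $k\in K$ therefore leave $\Omega'$ invariant and, since $[\Sym(\Omega'),\Sym(\Omega')] = \mathrm{Alt}(\Omega')$, restrict there to even permutations.

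The heart of the argument is to exhibit, for every pair $(m_H, m_K) \in M_H \times M_K$, an element of $L$ which on $\Omega'$ is the $3$-cycle $(\delta, m_K, m_H)$ and which fixes $(F_H \cup F_K) \setminus (M_H \cup M_K)$ pointwise. Using the transitivity conditions, I would choose $h \in H$, $k \in K$ with $\delta^h = m_H$, $\delta^k = m_K$, and then claim $g_{h,k} := h^{-1} k^{-1} h k$ works. Tracing $\delta$, $m_K$, and $m_H$ through the four applications (using that $h$ fixes $M_K$ pointwise and $k$ fixes $M_H$ pointwise) produces the desired $3$-cycle, while a routine case split confirms that the remaining points of $\Omega'$ are fixed. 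For a point $p \in F_H \setminus M_K$, both $h$-applications are trivial; and since $K$ leaves $F_H \cup \{\delta\}$ invariant but restricts transitively to the subset $M_K \cup \{\delta\}$, it must permute the complement $F_H \setminus M_K$, so the two $k$-applications in $g_{h,k}$ cancel on $p$. The symmetric argument handles $p \in F_K \setminus M_H$, and points in $F_H \cap F_K$ are trivially fixed.

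It remains to identify $L|_{\Omega'}$. By the previous paragraph it contains every $3$-cycle $(\delta, m_K, m_H)$ with $m_H \in M_H$, $m_K \in M_K$, and it is contained in $\mathrm{Alt}(\Omega')$ automatically. A brief direct computation shows that $(\delta, m_K, m_H)^{-1}(\delta, m_K', m_H) = (\delta, m_K', m_K)$ for distinct $m_K, m_K' \in M_K$, and an analogous product recovers every $3$-cycle on a triple $\{\delta, m_H, m_H'\} \subseteq \{\delta\} \cup M_H$. Together these give all $3$-cycles through $\delta$, and such $3$-cycles are well known to generate $\mathrm{Alt}(\Omega')$. Thus $L$ acts on $\Omega'$ as $\mathrm{Alt}(\Omega')$ and fixes the remaining points of $F_H \cup F_K \cup \{\delta\}$, as required.

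The main obstacle is purely bookkeeping in the three-regime commutator calculation (on $\Omega'$, on $F_H \setminus M_K$, and on $F_K \setminus M_H$); no deep theorem is invoked, and the final identification of the generated group from the supply of $3$-cycles is a short elementary argument.
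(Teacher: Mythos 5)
Your proof is correct and follows essentially the same route as the paper's: both hinge on showing that the commutator $[h,k]$ of transitivity witnesses $\delta^h = m_H$, $\delta^k = m_K$ induces the $3$-cycle $(\delta\, m_K\, m_H)$ on $F_H \cup F_K \cup \{\delta\}$ while fixing the remaining points, and then that these $3$-cycles generate the alternating group on $M_H \cup M_K \cup \{\delta\}$. The only cosmetic differences are that you take $L = [H,K]$ rather than the subgroup generated by just these particular commutators, and that you spell out the disjointness bookkeeping and the generation-of-$\mathrm{Alt}$ step that the paper leaves implicit.
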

    \begin{proof}
        For any points $\alpha \in M_H$ and $\beta \in M_K$,
        the transitivity conditions on $H$ and $K$ yield
        $h \in H$ and $k \in K$ such that $\delta^h = \alpha$,
        $\delta^k = \beta$. In particular, neither $h$ nor $k$
        fixes $\delta$.
        It follows
        from the properties of $H$ and $K$ that the action of
        the commutator $[h, k] \in G$ on $\Omega$ is such that

        \begin{align*}
            \theta^{[h, k]} = \theta^{h^{-1} k^{-1} hk} =
            (\theta^{k^{-1}})^{hk} = \theta^{k^{-1} k} = \theta
            & \qquad \textrm{if } \theta \in F_H \backslash
            \{ \delta^k = \beta \}, \\
            \theta^{[h, k]} =
            (\theta^{h^{-1}})^{k^{-1} hk} = \theta^{h^{-1} hk} =
            \theta^k = \theta
            & \qquad \textrm{if } \theta \in F_K \backslash
            \{ \delta^h = \alpha \}, \\
            \delta^{[h, k]} =
            (\delta^{h^{-1}})^{k^{-1} hk} = \delta^{h^{-1} hk} =
            \delta^k = \beta, & \\
            \beta^{[h, k]} =
            (\delta^k)^{h^{-1} k^{-1} hk} = \delta^{k k^{-1} hk} =
            (\delta^h)^k = \delta^h = \alpha, & \qquad \textrm{and} \\
            \alpha^{[h, k]} =
            \delta^{hh^{-1} k^{-1} hk} = (\delta^{k^{-1}})^{hk} =
            \delta^{k^{-1} k} = \delta. &
        \end{align*}

        It thus induces the 3-cycle $( \delta \, \beta \, \alpha )$
        on $F_H \cup F_K \cup \{ \delta \}$. Taking $L$ to be
        the subgroup of $G$ generated by these elements
        as $\alpha$ and $\beta$ range over $M_H$ and $M_K$,
        respectively, the induced permutations act
        as the alternating group on $M_H \cup M_K \cup \{ \delta \}
            \subseteq F_H \cup F_K \cup \{ \delta \}$
        for the desired result.
    \end{proof}

    \begin{lemma}
        \label{lem:ced6-6}
        \thm{cnc-edge-disj-6} holds under
        hypothesis~\ref{ced6:base_size} (i.e.\ when $b(G) \le 7$).
    \end{lemma}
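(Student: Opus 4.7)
The plan is to proceed by downward induction on $t := |E_\alpha \cap E_\beta|$. Given $\alpha \neq \beta \in \Omega$, the generalised CNC (\cnj{cnc-exp}) supplies edges $E_\alpha \ni \alpha$ and $E_\beta \ni \beta$ of $\saxh$ sharing at least one common vertex, and \thm{cnc-edge-dist} allows us to normalise so that $\alpha, \beta \notin E_\alpha \cap E_\beta$ and $E_\alpha \neq E_\beta$. Because $\alpha \in E_\alpha \setminus E_\beta$ and each edge has size $b(G) \le 7$, we get $1 \le t \le b(G) - 1 \le 6$. The goal is to realise $t = 1$, which is exactly the conclusion of \cnj{cnc-edge-disj} outside the excluded families $\sym{n}, \alt{n}$ in their natural actions.

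For the inductive step with $t \ge 2$, I would exploit the irredundancy of a minimum base. Fix $\gamma \in I := E_\alpha \cap E_\beta$. Since $E_\beta$ is a minimum base but $E_\beta \setminus \{\gamma\}$ is not, the pointwise stabiliser $H_\gamma := G_{(E_\beta \setminus \{\gamma\})}$ must contain an element $h$ with $\gamma^h \neq \gamma$ (otherwise it would fix all of $E_\beta$, contradicting minimality). The translate $E_\beta^h$ still contains $\beta$, and a direct calculation gives
\[
E_\alpha \cap E_\beta^h \;=\; (I \setminus \{\gamma\}) \cup \bigl(\{\gamma^h\} \cap E_\alpha\bigr),
\]
which has size $t-1$ whenever $\gamma^h \notin E_\alpha$. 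Thus the induction proceeds unless, for every $\gamma \in I$, the entire orbit $\gamma^{H_\gamma}$ is contained in $E_\alpha$; the symmetric argument (swapping the roles of $\alpha, \beta$) simultaneously demands $\gamma^{K_\gamma} \subseteq E_\beta$, where $K_\gamma := G_{(E_\alpha \setminus \{\gamma\})}$.

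In this recalcitrant configuration \lem{ced6-pre} applies, with $\delta = \gamma$, $H = H_\gamma$, $K = K_\gamma$, $F_H \supseteq E_\beta \setminus \{\gamma\} \supseteq M_K$, and $F_K \supseteq E_\alpha \setminus \{\gamma\} \supseteq M_H$; it yields a subgroup $L \le G$ that acts as the full alternating group on some $\Delta \subseteq E_\alpha \cup E_\beta$ containing $\gamma$, while fixing $\Omega \setminus \Delta$ pointwise. The main obstacle is then to convert the existence of this alternating section into the required dichotomy. Concretely, one must show that the coexistence of (i) an alternating subgroup of $G$ acting nontrivially on a specific subset of a minimum base, (ii) the bound $b(G) \le 7$, and (iii) primitivity of $G$ forces $G$ itself into one of the natural actions of $\sym{n}$ or $\alt{n}$. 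I expect to carry this out by comparing $|\Delta| \le 2b(G) - 1 \le 13$ against the base-size tables for small-base primitive groups: the bound severely restricts the possibilities, the residual small-degree anomalies beyond \rmk{altd} should fall to direct computation via \cite{primgrp}, and in all remaining instances the only way to reconcile the constructed alternating section with the primitive structure is for $G$ to be the relevant natural $\sym{n}$ or $\alt{n}$ action.
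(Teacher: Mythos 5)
Your opening moves match the paper's: the downward induction on $t=|E_\alpha\cap E_\beta|$ is equivalent to the paper's choice of a minimal non-trivial intersection, the normalisation via \thm{cnc-edge-dist} is exactly what the paper does, and the observation that the induction stalls only when $\gamma^{H_\gamma}\subseteq E_\alpha$ and $\gamma^{K_\gamma}\subseteq E_\beta$ is the paper's key first step. The genuine gap is the next sentence, where you assert that \lem{ced6-pre} ``applies'' in the stuck configuration with $F_K\supseteq E_\alpha\setminus\{\gamma\}$ and $F_H\supseteq E_\beta\setminus\{\gamma\}$. Hypotheses (iii) and (iv) of that lemma require $H=H_\gamma$ to leave $F_K\cup\{\gamma\}$ invariant and $K=K_\gamma$ to leave $F_H\cup\{\gamma\}$ invariant, and this fails in general: the stuck condition only forces the points lying in \emph{regular} $H_\gamma$-orbits into $E_\alpha$, whereas a point of $E_\alpha$ lying in a non-regular, non-trivial $H_\gamma$-orbit can be moved out of $E_\alpha$. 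The entire second half of the paper's proof exists to circumvent precisely this failure, via three sub-cases: (a) $|G_{(E_\beta\setminus\{\gamma\})}|$ prime, where every point is either fixed or in a regular orbit, so one may take $F_H=\Omega\setminus E_\alpha$ and all invariances hold; (b) the set $S_\beta$ of ``bad'' points (neither fixed by nor in a regular orbit of $G_{(E_\beta\setminus\{\gamma\})}$) has size at most $1$, where $H$ must be replaced by the pointwise stabiliser of $E_\beta\setminus\{\gamma\}\cup S_\beta$; and (c) the residual configuration, in which $b(G)\le 7$ forces the partition of $E_\alpha$ into $\ge 4$ regular-orbit points, $\ge 1$ fixed points and $\ge 2$ bad points to be exactly $4+1+2$, enabling a delicate final construction with $|H|=2$. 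In particular the hypothesis $b(G)\le 7$ enters through this counting, not through a comparison with base-size tables at the end, and your proposal has no mechanism for reaching an alternating section in cases (b) and (c).

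Two further points. First, the conclusion of \lem{ced6-pre} only asserts that $L$ fixes the points of $F_H\cup F_K\cup\{\delta\}$ outside the alternating support; with your choices this set is roughly $E_\alpha\cup E_\beta$, so you cannot conclude that $L$ fixes $\Omega\setminus\Delta$ pointwise, and the Jordan-style or table-lookup endgame is unavailable. Second, the paper's contradictions are of two different kinds: only in the prime case does it conclude (via a block argument) that $G$ contains the alternating group on $\Omega$, contradicting \rmk{altd}; in the other cases the contradiction is that a non-trivial element of $G$ fixes the base $E_\alpha$ pointwise, because the constructed $3$-cycles move points outside $E_\alpha$ while fixing $E_\alpha$. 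Your sketch conflates these into a single endgame that, as stated, would not close either case.
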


    \begin{proof}
        Suppose that $G$ is a counterexample. By assumption, there exist
        $\alpha \neq \beta \in \Omega$ for which no edges
        $E_\alpha \owns \alpha, E_\beta \owns \beta$ of
        $\saxh$ satisfy
        $|E_\alpha \cap E_\beta| = 1$, but
        $E_\alpha \cap E_\beta \neq \emptyset$ for some edges;
        choose $E_\alpha, E_\beta$ so that
        $|E_\alpha \cap E_\beta| \ge 2$ is minimal.
        By \thm{cnc-edge-dist}, we may simultaneously require
        $\alpha, \beta \notin E_\alpha \cap E_\beta$. \\

        Now, fix $\gamma \in E_\alpha \cap E_\beta$. By construction,
        any edge for $G$ of the form $E_\beta \backslash \{ \gamma \}
            \cup \{ \delta \}$ contains $\beta$ and intersects
        $E_\alpha$ in $| (E_\alpha \cap E_\beta) \backslash \{ \gamma \}
            \cup (E_\alpha \cap \{ \delta \}) | \geq 
            |E_\alpha \cap E_\beta| - 1  \ge 1$ points;
        since $|E_\alpha \cap E_\beta| \neq 0$ was chosen to be minimal,
        this implies $\delta \in E_\alpha$. It follows that
        all $\delta$ for which $E_\beta \backslash \{ \gamma \}
            \cup \{ \delta \}$ is a base of $G$, or equivalently
        on which $G_{(E_\beta \backslash \{ \gamma \})}$ has
        a regular orbit, lie in $E_\alpha$. Interchanging $\alpha$ and
        $\beta$ yields an analogous result. \\

        The subsequent argument is split into three cases.
        First, suppose that one of
        $| G_{(E_\alpha \backslash \{ \gamma \})} |$ and
        $| G_{(E_\beta \backslash \{ \gamma \})} |$ is prime ---
        without loss of generality, let it be the latter. It follows that
        $H = G_{(E_\beta \backslash \{ \gamma \})}$ fixes every element
        on which it does not have a regular orbit. Hence,
        the previous paragraph establishes that the conditions of
        \lem{ced6-pre} are satisfied upon setting
        $F_H = \Omega \backslash E_\alpha$,
        $M_H = \gamma^H \backslash \{ \gamma \}$,
        $K = G_{(E_\alpha \backslash \{ \gamma \})}$,
        $F_K = E_\alpha \backslash \{ \gamma \}$, and
        $M_K = \gamma^K \backslash \{ \gamma \}$, so that there exists
        $L \le G$ which induces all even permutations of
        $M = M_H \cup M_K \cup \{ \gamma \}$ while fixing
        the remaining points of $F_H \cup F_K \cup \{ \gamma \} = \Omega$.
        Any maximal $\Delta \subseteq \Omega$ which can replace
        $M$ in this statement therefore contains at least 3 elements.
        But then $\Delta \cup \Delta^g \supseteq \Delta$ can also
        replace $M$ for any $g \in G$ such that $\Delta \cap \Delta^g
            \neq \emptyset$, implying $\Delta$ is a block of
        at least 3 points for the primitive group $G$ and thus that
        $G \le \Sym{(\Omega})$ contains all even permutations of
        $\Delta = \Omega$. In light of \rmk{altd}, this contradicts
        the choice of $G$ as a counterexample to \cnj{cnc-edge-disj}. \\

        Consequently, neither
        $| G_{(E_\alpha \backslash \{ \gamma \})} |$ nor
        $| G_{(E_\beta \backslash \{ \gamma \})} |$ is prime.
        Since $E_\alpha$ and $E_\beta$ are bases for $G$ of
        minimal size, these values must in fact be composite.
        In particular, they must be at least 4.
        Write $S_\alpha$ for the set of points
        in $E_\beta$ not fixed by, or in regular orbits of,
        $G_{(E_\alpha \backslash \{ \gamma \})}$, with
        $S_\beta$ defined analogously. \\

        We next assume
        $| S_\alpha | \le 1$ or $| S_\beta | \le 1$ --- as before,
        without loss of generality, let it be the latter.
        If $S_\beta$ contains a point, that point does not belong
        to a regular orbit under $G_{(E_\beta \backslash \{ \gamma \})}$,
        so that in any event
        $H = G_{(E_\beta \backslash \{ \gamma \} \cup S_\beta)}$ is
        non-trivial. It again follows in conjunction
        with the previous discussion that \lem{ced6-pre} applies
        upon setting
        $F_H = E_\beta \backslash (S_\alpha \cup \{ \gamma \}) \cup S_\beta$,
        $M_H = \gamma^H \backslash \{ \gamma \}$,
        $K = G_{(E_\alpha \backslash \{ \gamma \})}$,
        $F_K = E_\alpha \backslash (S_\beta \cup \{ \gamma \})$,
        and $M_K = \gamma^K \backslash \{ \gamma \}$, yielding
        $L \le G$ which induces all even permutations of
        $M_H \cup M_K \cup \{ \gamma \} \supset \gamma^K$
        while fixing the remaining points of
        $F_H \cup F_K \cup \{ \gamma \} \supset E_\alpha$.
        But then there is $g \in G$ which permutes the $4 - 1 = 3$ or
        more points of $\gamma^K \backslash \{ \gamma \}$ despite fixing
        the base $E_\alpha$ of $G$, for yet another contradiction. \\

        Finally, suppose that $b(G) \le 7$. According to the results
        thus far, we may partition $E_\alpha$ (and similarly $E_\beta$)
        into 3 sets: at least
        $| \gamma^{G_{(E_\beta \backslash \{ \gamma \})}} | \ge 4$ points
        on which $G_{(E_\beta \backslash \{ \gamma \})}$ has a regular orbit,
        at least $| (E_\alpha \cap E_\beta) \backslash \{ \gamma \} | \ge 1$
        points it fixes, and $| S_\beta | \ge 2$ other points. This accounts
        for at least $4+1+2 = 7$ points, so that equality must hold
        in all the preceding statements. Hence, write
        $S_\beta = \{ \delta, \delta' \}$. If the setwise stabiliser of
        $S_\beta$ in $G_{(E_\beta \backslash \{ \gamma \})}$ is
        non-trivial, then the argument in the previous paragraph
        again yields a contradiction; otherwise, 
        $H = G_{(E_\beta \backslash \{ \gamma \} \cup \{ \delta \})}$
        has a regular orbit on $\delta'$. Moreover, as
        $G_{(E_\beta \backslash \{ \gamma \})}$ does not fix or have
        a regular orbit on $\delta \in S_\beta$, the group $H$ is
        a non-trivial proper subgroup thereof. Hence, $|H|$
        must be the unique proper divisor 2 of 4. Write
        $(\delta')^H = \{ \delta', \delta'' \}$, noting
        $\delta'' \notin E_\alpha \cup E_\beta$. It follows
        $G_{(E_\alpha \backslash \{ \gamma \})}$ does not have
        a regular orbit on $\delta''$, and thus again that
        $K = G_{(E_\alpha \backslash \{ \gamma \} \cup \{ \delta'' \})}$
        is non-trivial. \\

        Bearing the above in mind, along with the fact that
        $H$ permutes $E_\alpha \cup \{ \delta'' \} =
        \gamma^{G_{(E_\beta \backslash \{ \gamma \})}} \cup
        \{ \delta, \delta', \delta'' \} \cup (E_\alpha \cap E_\beta)$,
        we may apply \lem{ced6-pre} with
        $F_H = \gamma^{G_{(E_\alpha \backslash \{ \gamma \})}} \backslash
            \{ \gamma \}$,
        $F_K = E_\alpha \backslash \{ \gamma \} \cup \{ \delta'' \}$,
        $M_H = \gamma^H \backslash \{ \gamma \}$, and
        $M_K = \gamma^K \backslash \{ \gamma \}$ to obtain
        $L \le G$ which induces all even permutations of
        $M_H \cup M_K \cup \{ \gamma \} = \gamma^K \cup \gamma^H$
        while fixing the remaining points of
        $F_H \cup F_K \cup \{ \gamma \} \supset E_\alpha \cup
            \gamma^{G_{(E_\alpha \backslash \{ \gamma \})}}$.
        In particular, writing $\delta_H$ and $\delta_K$
        for arbitrary points in $M_H$ and $M_K$, respectively,
        there is $l \in L$ which induces the permutation
        $( \gamma \; \delta_H \; \delta_K )$
        on $E_\alpha \cup \gamma^{G_{(E_\alpha \backslash \{ \gamma \})}}$.
        Conjugates of $l$ by elements in $G_{(E_\beta \backslash \{ \gamma \})}$
        then induce 3-cycles on this set containing
        $\delta_H \in E_\beta \backslash \{ \gamma \}$ and (collectively)
        all points in $\gamma^{G_{(E_\beta \backslash \{ \gamma \})}}$.
        The group generated by these conjugates therefore induces all even
        permutations of the five points
        $\gamma^{G_{(E_\beta \backslash \{ \gamma \})}}
            \cup \{ \delta_H \}$ while fixing all other points
        in $E_\alpha$, once more producing the contradiction that
        there is a non-trivial element in $G$ fixing a base.
        It follows as claimed that $G$ satisfies \cnj{cnc-edge-disj}.
    \end{proof}

    \begin{lemma}
        \label{lem:ced6-aff}
        \thm{cnc-edge-disj-6} holds under
        hypothesis~\ref{ced6:aff} (i.e.\ when $G$ is of affine type).
    \end{lemma}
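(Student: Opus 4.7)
My plan is to argue by contradiction along the lines of \lem{ced6-6}, exploiting the regular translation action of $V \le G$ and the linearity of the point stabiliser $H = G_0 \le \mathrm{GL}(V)$. Suppose $G = V{:}H \le \mathrm{AGL}(V)$ is a primitive affine group satisfying the generalised CNC but failing \cnj{cnc-edge-disj}. Choose $\alpha \neq \beta \in \Omega = V$ and edges $E_\alpha \ni \alpha$, $E_\beta \ni \beta$ of $\saxh$ with $|E_\alpha \cap E_\beta| \ge 2$ minimal among valid choices, and (by \thm{cnc-edge-dist}) with $\alpha, \beta \notin E_\alpha \cap E_\beta$. Translating the whole configuration by a suitable element of $V \le G$, I may assume that some point of $E_\alpha \cap E_\beta$ equals $0$; thereafter $H$ acts linearly, and a base for $G$ through $0$ is precisely $\{0\} \cup S$ for $S \subseteq V \setminus \{0\}$ of size $b(G) - 1$ whose joint pointwise $H$-stabiliser is trivial.

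The bulk of the argument then transfers from \lem{ced6-6}. Fix $\delta \in (E_\alpha \cap E_\beta) \setminus \{0\}$. Irredundancy of minimum bases forces both $H_{(E_\alpha \setminus \{0, \delta\})}$ and $H_{(E_\beta \setminus \{0, \delta\})}$ to be nontrivial, while minimality of $|E_\alpha \cap E_\beta|$ forces every nontrivial $h \in H_{(E_\beta \setminus \{0, \delta\})}$ to send $\delta$ into $E_\alpha$, and symmetrically. Applying \lem{ced6-pre} exactly as in \lem{ced6-6} produces a subgroup $L \le G$ inducing the alternating group on a set $M \subseteq V$ of at least three points while fixing all remaining points of $\Omega$; primitivity then propagates $M$ to a $G$-block equal to $V$, forcing $\alt{|V|} \le G$. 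But $|\mathrm{AGL}(V)| = |V|\cdot|\mathrm{GL}(V)|$ is much smaller than $|V|!/2$ for $|V| > 4$, so this contradicts $G \le \mathrm{AGL}(V)$ except in tiny cases; the case $|V| = 4$ gives $G \in \{\alt{4}, \sym{4}\}$ in natural action, which is already covered by \rmk{altd} and \cnj{cnc-edge-disj} itself, and any remaining small degrees are handled directly using the \textsc{gap} primitive groups library \cite{primgrp}.

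The main obstacle, just as in \lem{ced6-6}, is its third branch, where $|H_{(E_\alpha \setminus \{0, \delta\})}|$ and $|H_{(E_\beta \setminus \{0, \delta\})}|$ are both composite and the ``leftover'' sets $S_\alpha, S_\beta$ (of points that are neither fixed by, nor in regular orbits of, the respective stabilisers) each contain at least two points. Without the hypothesis $b(G) \le 7$, the arithmetic partition $|E_\alpha| = 4 + 1 + 2$ exploited there is unavailable. In its place I would use the linear identification $H_{(S)} = H_{(\vspan(S))}$ and a dimension count to control the orbits of the two stabilisers outside their common fixed subspace, iterating the commutator construction---or reducing to the bounded-base case so that \lem{ced6-6} applies directly---to close the gap and derive the final contradiction.
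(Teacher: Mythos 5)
There is a genuine gap, and it sits exactly where you flag it. Your first two branches do transfer from \lem{ced6-6} essentially verbatim (they never use $b(G)\le 7$), but the entire difficulty of the affine case is the third branch, and your treatment of it is a statement of intent --- ``use the linear identification $H_{(S)}=H_{(\vspan(S))}$ and a dimension count \dots to close the gap'' --- rather than an argument. It is not clear that iterating the commutator construction can be made to work when $b(G)$ is unbounded, and the paper does not attempt this. Instead, the paper's proof abandons the contradiction framework of \lem{ced6-6} almost entirely and argues \emph{constructively}: after translating so that $w\mapsto 0$, it takes the base $B=E_w-w$ of $G_0$, forms $B'=\{x\in\vspan(B): x=u-w \text{ or } x+w\notin E_v\}$ for a chosen $u\in E_v\cap E_w$, and extracts a maximal linearly independent subset $B_2\subseteq B'$ containing $u-w$. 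The observation that a pointwise stabiliser in $\gammal{d}{q}$ of $B_2$ fixes all of $\vspan(B_2)$ then shows that, provided $\vspan(B')=\vspan(B)$, the set $E_w'=\{x+w : x=0 \text{ or } x\in B_2\}$ is a minimum base through $w$ meeting $E_v$ in exactly $\{u\}$ --- which \emph{directly verifies} \cnj{cnc-edge-disj} with no contradiction needed. Only when $\vspan(B')\ne\vspan(B)$ does a counting argument ($q^{b(G)-2}\ge q^{b(G)-1}-b(G)+1$) force $b(G)\le 3$, at which point \lem{ced6-6} finishes. You correctly identified the relevant affine structure (stabilisers fix spans, and the reduction to bounded base size is available as a fallback), but the mechanism that actually exploits it --- swapping the base for a linearly independent spanning subset that dodges $E_v$ --- is missing, and without it the proof does not close.

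A secondary, smaller point: your endgame for the branch that does transfer ($\alt{|V|}\le G$ versus $|{\rm AGL}(V)|<|V|!/2$) is a valid way to reach the contradiction, and is a mildly different route from the paper's appeal to \rmk{altd}; that part is fine. The problem is solely that the main case is not proved.
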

    \begin{proof}
        Let $G \le \Sym{(\vsp{d}{q})}$ be a primitive group
        of affine type for which \cnj{cnc-exp} holds.
        Given any $v \neq w \in \vsp{d}{q}$,
        there consequently exist edges $E_v, E_w$
        of the Saxl hypergraph $\saxh$ such that
        $E_v \cap E_w \neq \emptyset$.
        By \thm{cnc-edge-dist}, it may be assumed
        without loss of generality that
        $v, w \notin E_v \cap E_w$. \\

        Now, the translation $u \mapsto u - w$
        produces a base $B = \{ x - w \mid x \neq w \in E_w \}$
        for $G_0$ of size $b(G) - 1$. Choosing arbitrary $u \in E_v \cap E_w$,
        let $B_2$ be a maximal linearly independent subset of
        $B' = \{ x \in \vspan{(B)} \mid x = u - w \textrm{ or } x + w \notin E_v \}$
        containing $u - w$. It is clear that $|B_2| \le |B| = b(G) - 1$;
        moreover, any $g \in G_0 \le \gammal{d}{q}$ which
        fixes $B_2$ pointwise must also fix $\vspan{(B_2)} = \vspan{(B')}$,
        so that if $\vspan{(B')} = \vspan{(B)}$ the set
        $E_w' = \{ x + w \mid x = 0 \textrm{ or } x \in B_2 \}$ is a base of
        minimal size for $G$. Since we have $w \in E_w'$ and $E_v \cap E_w' = \{ u \}$ by construction,
        \cnj{cnc-edge-disj} holds and we are done. \\
        
        It only remains to consider the possibility that
        $\vspan{(B')} \neq \vspan{(B)}$. Note that
        the former is a subspace of the latter vector space
        over $\mathbb{F}_q$, while the fact that
        $B$ is an irredundant base for $G_0 \le \gammal{d}{q}$
        guarantees its elements are linearly independent. Hence
        \[ q^{b(G) - 2} = q^{|B| - 1} \ge | \vspan{(B')} |
            \ge |B'| \ge | \vspan{(B)} | - |E_v | + 1 \]
        \[    = q^{b(G) - 1} - b(G) + 1. \]
        But this implies $b(G) - 1 \ge (q-1) q^{b(G) - 2} \ge 2^{b(G) - 2}$,
        from which it follows that $b(G) \le 3$. An application of
        \lem{ced6-6} completes the proof. \\

        It only remains to consider the possibility that
        $\vspan{(B')} \neq \vspan{(B)}$. Note that
        the former is a subspace of the latter vector space
        over $\mathbb{F}_q$, while the fact that
        $B$ is an irredundant base for $G_0 \le \gammal{d}{q}$
        guarantees its elements are linearly independent. Hence
        \[ q^{b(G) - 2} = q^{|B| - 1} \ge | \vspan{(B')} |
            \ge |B'| = | \vspan{(B)} | - |E_v - w| + 1 \]
        \[    = q^{b(G) - 1} - b(G) + 1. \]
        But this implies $b(G) - 1 \ge (q-1) q^{b(G) - 2} \ge 2^{b(G) - 2}$,
        from which it follows that $b(G) \le 3$. An application of
        \lem{ced6-6} completes the proof.
    \end{proof}

    \vspace{1.5pt}
    We now proceed with the remainder of the proof. 
    \begin{proof}[Proof of \thm{cnc-edge-disj-6}]
        The result largely follows from Lemmas~\ref{lem:ced6-6}
        and~\ref{lem:ced6-aff}. Sufficiency of
        hypothesis~\ref{ced6:simp} follows from the theorem of
        Burness \etal\cite[Corollary~1]{basesim} that $b(G) \le 7$
        for almost simple $G$ in a non-standard action. Likewise,
        in the case where $G$ is of diagonal type, Huang
        \cite[Theorem~3]{diagbase} establishes that the conditions of
        hypothesis~\ref{ced6:diag} ensure $b(G) \le 7$. Next
        suppose $G \le L \wr P$ is of product type. A result of Bailey
        and Cameron \cite{prodbase} states that $L \wr P$ possesses
        a base of size $t$ if and only if $L$ has at least $D(P)$
        regular orbits on $\Omega^t$, where the {\em distinguishing number}
        $D(P)$ is the least number of parts in a partition preserved
        by no non-trivial element of $P$; Seress \cite{dist_no1, dist_no2}
        proves that $D(P) \le 5$ for any top group $P$ satisfying the conditions
        in hypotheses~\ref{ced6:prod-simp} and~\ref{ced6:prod-diag},
        so that it is enough show the top group $L$
        of $G$ induces at least 5 regular orbits on $\Omega^7$
        under these conditions. Considering possible positions of repeated elements
        in $s \in \Omega^7$ shows as much holds if $b(H) \le 6$, which
        \cite{diagbase,prodbase} reveal is the case
        under the given conditions unless $L$ is $\MT{24}$
        in its natural action. In this exceptional case, using the fact that
        $\MT{24}$ is 5-transitive, one can nonetheless confirm
        computationally that there are 7 regular orbits
        on $\Omega^7$. \\

        It remains to handle case~\ref{ced6:omega}. Per Moscatiello and
        Roney-Dougal \cite[Theorems~1 and~5]{smlbasegrp}, a primitive group
        $G$ of degree $n$ satisfies $b(G) < \log_2{n} + 1$ unless
        $\alt{m}^r \le G \le \sym{m} \wr \sym{r}$ acts on $r$-tuples of
        $k$-element subsets of $\{ 1, \ldots , m \}$, the group
        $G = \MT{n}$ for $n \in \{ 12, 23, 24 \}$ has its natural action,
        $G = \symp{2m}{2}$ for $m \ge 4$ has its 2-transitive action
        on $n = 2^{2m - 1} - 2^{m - 1}$ points, or $G = {\rm AGL}_d (2)$.
        The second and fourth cases have $G$ almost simple and affine,
        respectively, and are thus covered by the above. In the third case,
        $n \le 128$ implies $m = 4$, which is listed in \tbl{detritus}.
        Finally, suppose the first case applies. If $m = 2$, then
        $\sym{m}^r \cong \vsp{r}{2}$
        and $G$ is affine. Otherwise, if $r > 2$, the condition that
        $\binom{m}{k}^r = | \Omega | \le 128$ demands
        $\binom{m}{k} \le \sqrt[3]{128} < 6$ and $r \le \log_3{128} < 5$.
        The top group of $G$ is thus soluble and its base group cannot have
        base size greater than 5, so that $b(G) \le 7$ by the argument
        for groups of product type in the previous paragraph. As for
        $r \in \{ 1, 2 \}$, we first mention a result in \cite{Caceres}
        (improving on a result in \cite{Halasi})
        that $b( \sym{m} ) = \lceil (2m - 2)/(k + 1) \rceil$ for $\sym{m}$ acting
        on sets of $k$ points whenever $2m \ge k^2 + k$. This yields
        $b( \sym{m} ) \le 6$ --- and thus, as before,
        $b(G) \le b( \sym{m} \wr \sym{r} ) \le 7$ --- for any $k < m/2$
        such that $\binom{m}{k}^2 \le 128$, with the exception of
        those listed in \tbl{detritus}. It similarly shows
        that $b( \sym{m} ) \le 7$ for any $k, m$ such that
        $\binom{m}{k} \le 128$, excluding cases in \tbl{detritus},
        those where $k = 1$, and $(k, m) = (4, 9)$; the second class
        are accounted for by \rmk{altd}, while an exact expression
        for the relevant base size obtained in \cite[Theorem~1.1]{altsubbase}
        establishes that $b(\sym{9}) \le 4$ in the latter case\footnote{
            :~Take $l = 4$ and $k = 2$ in del~Valle and Roney-Dougal's
                expression.
        }. \\
        
        To complete the proof, the cases listed in \tbl{detritus} are handled
        computationally. For convenience, we note that the base sizes of
        each group (which appear in \tbl{detritus}) follow from \cite{smlbasegrp},
        \cite{prodbase} and the sharp transitivity of alternting and symmetric groups
        in their natural actions, or \cite{Caceres}; moreover, up to conjugacy,
        it suffices to confirm \cnj{cnc-edge-disj} for a fixed $\alpha$ and
        one representative $\beta$ of each suborbit.
    \end{proof}

    \subsection{\label{sec:goss}\GosNum{}s}
    A second approach to generalisation of the CNC involves
    strengthening the constants in the statement
    ``any two vertices have a (i.e.\ 1) common neighbour''
    through the use of \gosnum{}s for (hyper)graphs. Recall from \S\ref{sec:goss_int} that the $n$th \gosnum{} $g_n$ is the least number of common neighbours possessed by $n$ vertices.
    As the relevant concept does not appear to have been
    named in the previous literature, we begin
    with some elementary properties. \\

    \begin{lemma}
        \label{lem:gosnum}
        Let $\Gamma$ be a finite graph or hypergraph
        and $i \le j$ positive integers. Then the \gosnum{}s of $\Gamma$
        satisfy $g_i \ge g_j$. Furthermore,
        $g_i + i \ge g_j + j$ if $g_j > 0$.
    \end{lemma}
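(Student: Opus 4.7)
The plan is to prove the two inequalities separately, reducing the second to an iterated application of a single-step bound.

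For the first claim $g_i \ge g_j$, note that if $j > |V|$ then $g_j = 0$ by convention and there is nothing to show; otherwise $i \le j \le |V|$, and any $i$-subset $S$ of $V$ can be extended to some $j$-subset $S' \supseteq S$. Since $\bigcap_{v \in S'} N_\Gamma(v) \subseteq \bigcap_{v \in S} N_\Gamma(v)$, we get
\[
g_j \;\le\; \Bigl| \bigcap_{v \in S'} N_\Gamma(v) \Bigr| \;\le\; \Bigl| \bigcap_{v \in S} N_\Gamma(v) \Bigr|.
\]
Taking the minimum over all $i$-subsets $S$ then yields $g_j \le g_i$.

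For the second claim, it suffices to establish the single-step inequality $g_{k-1} \ge g_k + 1$ whenever $g_k > 0$; iterating it from $k = j$ down to $k = i + 1$ then gives $g_i \ge g_j + (j - i)$, i.e.\ $g_i + i \ge g_j + j$. The induction chains safely because $g_k \ge g_j + (j-k) > 0$ at every intermediate stage, so the hypothesis of the single-step lemma remains satisfied throughout. To prove $g_{k-1} \ge g_k + 1$, let $T \subseteq V$ be a $(k-1)$-subset attaining $g_{k-1}$. By part~(1), $g_{k-1} \ge g_k > 0$, so some $u \in \bigcap_{v \in T} N_\Gamma(v)$ exists, and $u \notin T$ since a vertex is not adjacent to itself. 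Then $T \cup \{u\}$ is a $k$-subset, and because $u \notin N_\Gamma(u)$ we have $u \in \bigcap_{v \in T} N_\Gamma(v)$ but $u \notin \bigcap_{v \in T \cup \{u\}} N_\Gamma(v)$. Hence
\[
g_k \;\le\; \Bigl| \bigcap_{v \in T \cup \{u\}} N_\Gamma(v) \Bigr| \;\le\; \Bigl| \bigcap_{v \in T} N_\Gamma(v) \Bigr| - 1 \;=\; g_{k-1} - 1,
\]
as required.

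There is no serious obstacle here: both parts are essentially immediate once one unpacks the definition of $g_n$ and exploits the standing convention that $u \notin N_\Gamma(u)$ in a hypergraph. The only care needed is the inductive bookkeeping in part~(2), namely verifying that each intermediate $g_k$ in the chain from $j$ down to $i$ remains strictly positive so that the single-step lemma can be reapplied; this is automatic from $g_k \ge g_j + (j - k) \ge 1$.
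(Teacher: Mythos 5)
Your proof is correct and follows essentially the same route as the paper: the first inequality by monotonicity of intersections under set extension, and the second by adding a common neighbour to a minimising set to obtain the single-step bound $g_{k-1} \ge g_k + 1$, then iterating (the paper states this step more tersely but with the identical idea). Your explicit check that the intermediate $g_k$ stay positive along the chain is a detail the paper leaves implicit.
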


    \begin{proof}
        That $g_i \ge g_j$ is trivial. As for the second claim,
        it suffices to show that $g_i + i \ge g_{i+1} + i + 1$
        whenever $g_{i+1} > 0$.  This follows from the fact
        that any $i$ vertices of $\Gamma$ and one of their
        $g_i \ge g_{i+1} \ge 1$ common neighbours form a set of size
        $i + 1$.
    \end{proof}

    By \lem{2sec}, the \gosnum{}s of a group's generalised Saxl graph
    and Saxl hypergraph are equal; these will simply be
    referred to as the \gosnum{}s of the group for brevity. The CNC can
    now be rephrased as the assertion that $g_2 > 0$
    for any finite primitive group $G$. Except in Burness and
    Giudici's original case $b(G) = 2$, interestingly, one can indeed say more:

    \begin{theorem}
        \label{thm:g2}
        Let $G \le \Sym{(\Omega)}$ be a finite group of base size
        $b(G) \ge 3$. Then $g_2 \neq 1$. In particular, if $G$ is
        primitive and the CNC holds, $g_2 \ge 2$.
    \end{theorem}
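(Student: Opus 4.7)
The plan is to argue by contradiction: suppose some pair $\alpha \neq \beta \in \Omega$ has $N(\alpha) \cap N(\beta) = \{\gamma\}$, and split into cases according to whether $\alpha$ and $\beta$ are adjacent in $\saxh$.

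If $\alpha \sim \beta$, they lie in a common edge $E$ of size $b(G) \ge 3$. When $b(G) \ge 4$, the set $E \setminus \{\alpha, \beta\}$ contains at least two common neighbours, contradicting uniqueness. When $b(G) = 3$, necessarily $E = \{\alpha, \beta, \gamma\}$; here the pointwise stabiliser $G_{\alpha,\beta}$ is non-trivial (as $\{\alpha, \beta\}$ is too small to be a base) and has $\gamma$ in a regular orbit of size $\ge 2$, with each orbit point $\gamma^g$ giving rise to a base $\{\alpha, \beta, \gamma^g\}$ and hence a distinct common neighbour, once more a contradiction.

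Suppose instead that $\alpha \not\sim \beta$. I would fix edges $E_\alpha = \{\alpha, \gamma\} \cup A$ and $E_\beta = \{\beta, \gamma\} \cup B$ with $|A| = |B| = b(G) - 2 \ge 1$, noting that non-adjacency forces $\beta \notin E_\alpha$ and $\alpha \notin E_\beta$. The heart of the argument is to examine $H_\alpha := G_{(\{\alpha\} \cup A)}$, which is non-trivial (its pointwise-fixed set has size $b(G) - 1$ and so cannot be a base) and has $\gamma$ in a regular orbit. I claim $H_\alpha \cap G_\beta = 1$: any non-trivial $h$ in the intersection would fix $\alpha$, $\beta$, and $A$ pointwise while sending $\gamma$ to some $\gamma' \neq \gamma$, so that $E_\alpha^h = \{\alpha, \gamma'\} \cup A$ and $E_\beta^h = \{\beta, \gamma'\} \cup B^h$ would both be edges of $\saxh$ -- realising $\gamma'$ as a second common neighbour of $\alpha$ and $\beta$ and contradicting uniqueness. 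Consequently $\{\alpha, \beta\} \cup A$ has trivial pointwise stabiliser and size $b(G)$, hence is an edge of $\saxh$ containing both $\alpha$ and $\beta$, contradicting $\alpha \not\sim \beta$.

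The main subtlety is the joint translation trick in the non-adjacent case: using a single element $h \in H_\alpha$ to translate both $E_\alpha$ and $E_\beta$ at once, so that a stabilised $\gamma$-orbit on the $\alpha$-side is forced to land inside $N(\beta)$ as well. Everything else reduces to orbit-size bookkeeping and the definition of a base. The ``in particular'' clause is then immediate: under the CNC we have $g_2 \ge 1$, which combined with $g_2 \neq 1$ yields $g_2 \ge 2$.
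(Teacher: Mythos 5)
Your proof is correct, but it is organised differently from the paper's. The paper fixes edges $\{\alpha,\gamma,\alpha_2,\dots\}$ and $\{\beta,\gamma,\beta_2,\dots\}$ and splits on whether $G_{\alpha,\beta,\gamma}=G_{\alpha,\beta}$: if equality holds then $G_{\alpha,\beta}\le G_{\alpha,\gamma}$ forces $\{\alpha,\beta,\alpha_2,\dots,\alpha_{b(G)-1}\}$ to be a base, producing the extra common neighbour $\alpha_2$; otherwise the orbit $\gamma^{G_{\alpha,\beta}}$ already supplies several common neighbours. You instead split on adjacency of $\alpha$ and $\beta$. The two mechanisms are the same --- translating $\gamma$ by elements fixing $\alpha$ and $\beta$, and swapping $\beta$ for $\gamma$ in the edge $E_\alpha$ --- but you combine them differently: in your non-adjacent case you do not take ``$G_{\alpha,\beta}$ fixes $\gamma$'' as a hypothesis, but rather prove outright that $H_\alpha\cap G_\beta$ is trivial (any non-trivial element would translate \emph{both} edges at once and manufacture a second common neighbour), and then conclude that $\{\alpha,\beta\}\cup A$ is an edge, contradicting non-adjacency. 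This buys a slightly more self-contained argument for the base-swap step, at the cost of having to dispose of the adjacent case separately (which is easy, since a shared edge of size $\ge 3$ hands you common neighbours directly, and the $b(G)=3$ subcase reduces to the orbit argument). The paper's dichotomy is marginally shorter because the case hypothesis $G_{\alpha,\beta}=G_{\alpha,\beta,\gamma}$ makes the base-swap immediate; your version makes explicit why that triviality must hold in any putative counterexample. Both routes are valid, and your handling of the small checks (that $\gamma^h\ne\alpha,\beta$ because $h$ fixes $\alpha$ and $\beta$, and that $|\{\alpha,\beta\}\cup A|=b(G)$ because $\beta\notin E_\alpha$) is sound.
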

    \begin{proof}
        Suppose to the contrary. A counterexample yields
        $\alpha, \beta \in \Omega$ with a unique common neighbour
        $\gamma$ in $\saxh$. Fixing edges $\{ \alpha, \gamma, \alpha_2,
            \ldots , \alpha_{b(G)-1} \}$ and $\{ \beta, \gamma,
            \beta_2, \ldots , \beta_{b(G)-1} \}$ of $\saxh$,
        there are two cases to consider. \\

        If $G_{\alpha, \beta, \gamma} = G_{\alpha, \beta}$,
        it follows that $G_{\alpha, \beta} = G_{\alpha, \beta, \gamma}
            \le G_{\alpha, \gamma}$ and thus that
        $\{ \alpha, \beta, \alpha_2, \ldots , \alpha_{b(G)-1} \}$
        is a base for $G$. Both $\alpha$ and $\beta$ therefore have
        $\alpha_2 \neq \gamma$ as a neighbour: a contradiction.
        On the other hand, if $G_{\alpha, \beta, \gamma} < G_{\alpha, \beta}$,
        the multiple elements of $\gamma^{G_{\alpha, \beta}}$
        are all neighbours for $\alpha$ and $\beta$, another contradiction.
        The result follows.
    \end{proof}

    \begin{remark}
        The hypothesis $b(G) \ge 3$ is necessary to exclude
        the trivial counterexample $\sym{3}$. This extends
        to an infinite family of transitive counterexamples
        $\sym{3}^k$. Further 2-transitive counterexamples
        are impossible, for such groups have $\saxg$ a complete graph; the question of the existence of primitive counterexamples is open.
    \end{remark}

    Turning to $g_n$ for larger $n$, experiments with
    the GAP Primitive Groups library \cite{primgrp} suggest that
    $g_n > 0$ usually holds for values of $n$ much larger than 2.
    It is tempting to conjecture that this is so in general
    when $b(G) > 2$, or even that $g_{b(G)} > 0$ in all but
    perhaps a small number of cases; however, the following
    result confirms that the choice of 2 vertices
    in the generalised CNC is optimal.

    \begin{theorem}
        \label{thm:gos-aff}
        Let $q$ be an odd prime power and $n = q^k - 1$, where
        $k \ge 1$ is an integer. Then the primitive affine group
        $G = \vsp{n-1}{q}{:}\sym{n}$ satisfies $b(G) = k+1$ and
        $\gamma_3 = 0$.
    \end{theorem}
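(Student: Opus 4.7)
My plan is to reformulate everything in terms of functions $\lambda_u \colon \{1, \ldots, n\} \to \mathbb{F}_q$ and reduce $g_3 = 0$ to a cyclic-difference estimate over $\mathbb{F}_q$. First I would identify $\vsp{n-1}{q}$ with the quotient $\mathbb{F}_q^n / \langle \mathbf{1}_n \rangle$, on which $\sym{n}$ acts by permuting coordinates; since $n = q^k - 1$ gives $q \nmid n$, this deleted permutation module is irreducible, so $G$ is primitive. Each vertex $u$ is then encoded by a function $\lambda_u$ defined modulo additive constants. A set $\{0, u_2, \ldots, u_s\}$ forms a $G$-base precisely when the stacked map $\Lambda \colon \{1, \ldots, n\} \to \mathbb{F}_q^{s-1}$ with coordinates $\lambda_{u_j}$ is injective and has image with trivial translation stabiliser in $\mathbb{F}_q^{s-1}$. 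For $s \le k$ the target $\mathbb{F}_q^{s-1}$ is too small for injectivity; for $s = k+1$, any injective $\Lambda$ has image of size $q^k - 1$ in $\mathbb{F}_q^k$, whose complement is a single point with trivial translation stabiliser. This gives $b(G) = k+1$, and the same analysis shows that $w$ is adjacent to $u$ in $\saxh$ exactly when every fiber of $\lambda_{w-u}$ has size at most $q^{k-1}$.

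For $g_3 = 0$ I would construct the triple $u_1 = 0$, $u_2$ a vertex whose representative $\lambda_{u_2}$ takes value $0$ on some $A \subset \{1, \ldots, n\}$ of size $(q^k - 1)/2$ and value $1$ on $B = \{1, \ldots, n\} \setminus A$ (legitimate because $q^k$ is odd), and $u_3 = -u_2$; these vertices are distinct since $2\lambda_{u_2}$ is nonconstant when $q$ is odd. For any putative common neighbour $w$, write $n_\alpha = |\{i \in A : \lambda_w(i) = \alpha\}|$ and $m_\alpha = |\{i \in B : \lambda_w(i) = \alpha\}|$. The adjacency conditions on $\lambda_w$, $\lambda_w - \lambda_{u_2}$, and $\lambda_w + \lambda_{u_2}$ reduce to $n_\alpha + m_{\alpha + j} \le q^{k-1}$ for each $j \in \{-1, 0, 1\}$ and each $\alpha \in \mathbb{F}_q$. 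Setting $M_\alpha = \max_{j \in \{-1, 0, 1\}} m_{\alpha + j}$ and summing $n_\alpha \le q^{k-1} - M_\alpha$ gives $\sum_\alpha n_\alpha \le q^k - \sum_\alpha M_\alpha$. Since $\sum_\alpha n_\alpha = |A| = (q^k - 1)/2$, a common neighbour exists only if $\sum_\alpha M_\alpha \le (q^k + 1)/2$.

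The hard part will be the matching lower bound $\sum_\alpha M_\alpha \ge (q^k + 3)/2$, which I would prove by analysing the boundary $N(S) \setminus S$ of the maximiser set $S = \{\alpha : m_\alpha = \max m\}$ in the cycle $\mathbb{F}_q$. Each boundary point contributes at least $1$ to the excess $\sum_\alpha (M_\alpha - m_\alpha)$; when $|S| \le q - 2$ the fact that a proper subset of $\mathbb{F}_q$ missing at least two elements has at least two cyclic neighbours outside itself gives $|N(S) \setminus S| \ge 2$, so the excess is at least $2$. In the edge case $|S| = q - 1$ there is only one boundary point $\beta^*$, but the congruence $(q^k - 1)/2 \equiv (q - 1)/2 \pmod{q}$ (an induction using $q^k \equiv q \pmod{2q}$ for $q$ odd) forces $m_{\beta^*} \le \max m - 2$, so the lone boundary contribution already supplies the needed excess. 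Nonconstancy of $m$, required to exclude the trivial case, follows from the same congruence. Combining both bounds yields the required contradiction and proves $g_3 = 0$.
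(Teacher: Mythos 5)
Your reduction to functions $\lambda_u\colon\{1,\ldots,n\}\to\mathbb{F}_q$ modulo constants is sound: the base criterion via injectivity of the stacked map plus triviality of the translation stabiliser of its image, the deduction $b(G)=k+1$, and the adjacency criterion ``every fibre of $\lambda_{w-u}$ has size at most $q^{k-1}$'' are all correct. Your route to $g_3=0$ is, however, genuinely different from the paper's. The paper works in the sum-zero model, takes the concentrated triple $v=(1,-1,0,\ldots,0)$, $-v$, $0$, and uses the fact that membership in a minimal base pins down the value distribution of a vector \emph{exactly} (each $x\in\mathbb{F}_q$ occurs exactly $q^{k-1}$ times, minus one for $x=0$); comparing the two coordinates in which $v-w$, $-v-w$, $-w$ differ then gives an immediate multiset contradiction. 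Your balanced two-valued $u_2$ instead yields the inequality $\sum_\alpha M_\alpha\le(q^k+1)/2$ against a hoped-for lower bound $\sum_\alpha M_\alpha\ge\sum_\alpha m_\alpha+2=(q^k+3)/2$, which is workable but much longer.

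The gap is in that lower bound. You treat $\mathbb{F}_q$ under the shifts $\pm1$ as a single $q$-cycle, but for $q=p^e$ with $e\ge2$ the Cayley graph of $(\mathbb{F}_q,+)$ on $\{\pm1\}$ is a disjoint union of $q/p$ cycles of length $p$ (the cosets of the prime subfield $\mathbb{F}_p$). Your claim that a proper subset $S\subseteq\mathbb{F}_q$ with $|S|\le q-2$ has at least two neighbours outside itself is then false --- take $S$ to be a single coset of $\mathbb{F}_p$, which has empty boundary --- and your congruence modulo $q$ says nothing about the individual coset sums. So for every non-prime $q$ the key combinatorial step fails as written. The conclusion can be rescued by running the argument coset by coset and working modulo $p$: excess $0$ forces $m$ constant on every coset of $\mathbb{F}_p$, giving $p\mid\sum_\alpha m_\alpha=(q^k-1)/2\equiv -2^{-1}\pmod p$, a contradiction; excess exactly $1$ forces $m$ constant on every coset except for one point deficient by exactly $1$, giving $\sum_\alpha m_\alpha\equiv-1\pmod p$, again incompatible with $-2^{-1}$. (You should also dispose of $(q,k)=(3,1)$ separately, where $|\Omega|=3$ makes the claim vacuous.) With that repair your proof goes through, but as submitted the estimate $\sum_\alpha M_\alpha\ge(q^k+3)/2$ is unproved outside the case of prime $q$.
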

    \begin{proof}
        It will be convenient to treat $N = \vsp{n - 1}{q}$
        as the deleted permutation module for $\sym{n}$
        contained in $\vsp{n}{q}$. Primitivity of $G$ follows
        from irreducibility of this module. Write $v_j$
        for the $j^{\rm th}$ component of a vector
        in $\vsp{n}{q}$. \\

        To begin with, consider an arbitrary subset
        $B = \{ w_1, w_2, \ldots , w_m \}$ of
        $N \backslash \{ 0 \}$. The subgroup of $G_0 = \sym{n}$
        that fixes $B$ pointwise consists of
        precisely those permutations that interchange coordinates $j$ of
        $\vsp{n}{q}$ for which the sequences $\{ (w_i)_j \}_{i=1}^m$
        are equal, so $B$ is a base if and only if all such sequences
        are distinct. Hence $b(G_0) \ge \lceil \log_q { n } \rceil
            = k$; conversely, setting $m = k$ and
        $(w_i)_j$ equal to an element of $\mathbb{F}_q$ corresponding
        to the $j^{\rm th}$ digit of $i$ in a base $q$ representation, we
        see that the relevant sequences are distinct and that
        $\sum_{j=1}^n (w_i)_j = q^{k-1}
            \sum_{x \in \mathbb{F}_q} x - 0 = 0$, ensuring
        $b(G_0) = k$. Note also that no $w_i$ in a base of $k$ elements
        can have more than $q^{k-1}$ components $(w_i)_j$ equal
        to any given value of $\mathbb{F}_q$, lest the above argument
        yield $b(G_0) \ge 1 + \lceil \log_q{(q^{k-1} + 1)} \rceil + 1 =
                k + 1$. In conjunction with the fact that
        the $n = q^k - 1$ coordinates in any element of $\vsp{n-1}{q}$
        are required to add to 0, this proves such $w_i$ have
        \textsl{exactly} $q^{k-1}$ components equal to each non-zero
        element of $\mathbb{F}_q$. \\

        Now, each base $B \subseteq \vsp{n}{q}$ of $G$ gives rise to a base
        $(B \backslash \{ v \}) - v$ of $G_0$ for any $v \in B$,
        while bases $B$ of $G_0$ give rise to bases $B \cup \{ 0 \}$ of
        $G$. It follows that $b(G) = b(G_0) + 1 = k + 1$ as claimed. Moreover,
        the vertices $v = ( 1, -1, 0, 0, \ldots , 0 )$, $-v$, and
        $0 = 0v$ of $G$'s Saxl hypergraph $\saxh$ cannot have
        a common neighbour: were such a neighbour $w$ available,
        the non-zero vectors among $v - w$, $-v - w$, and $-w$ would
        all belong to bases of minimal size for $G_0$ or be 0, which in light of
        the above would mean that all $x \in \mathbb{F}_q$ appear
        as a component the same number of times $q^{k-1} - \ind{x = 0}$
        in each. But, if none of the three vectors is 0, that would
        entail the impossibility $\{ 1 - w_0, -1 - w_1 \} =
        \{ -1 - w_0, 1 - w_1 \} = \{ -w_0, -w_1 \}$; if one of
        them is 0, by contrast, the $(q - 1)q^{k-1}$ non-zero components
        of the each remaining vector would lie among the 2 components
        in which it differs, forcing $q = 3$ and $k = 1$ for the contradiction
        that 3 points in a hypergraph of $| \Omega | = q^{n-1} =
            3^{3^1 - 1 - 1} = 3$ vertices share a common neighbour.
        Hence $\gamma_3 = 0$.
    \end{proof}

    \begin{remark}
        In the case that $q$ is even and $k \ge 2$, the above proof can
        be adapted to show $\gamma_4 = 0$ by taking the vertices
        $v = (1, 1, 0, 0, \ldots, 0)$, $v' = (1, 0, 1, 0, 0, \ldots , 0)$,
        $v + v'$, and $0 = 0v$.
    \end{remark}

    \thm[g2]{gos-aff} together prove \thm{goss}.

    \section{\label{sec:valency}Valency}
    Let us now consider the degrees of vertices in $\saxh$.
    Vertex-transitivity of $\saxh$ (cf.\ \lem{lem21}) ensures that it
    is a regular hypergraph in the particular case that $G$ is transitive,
    but the following degree formula holds in general.

    \begin{lemma}
        \label{lem:valency}
        Let $G \le \Sym{(\Omega)}$ be a finite permutation group.
        The degree of $\alpha \in \Omega$ in $\saxh$ is
        \[ d = \frac{| \mathcal{B} |}{(b(G) - 1)!} =
            \frac{n \left| G_\alpha \right|}{( b(G) - 1 )!}, \]
        where $n$ is the number of orbits of $G_\alpha$ on its set
        $\mathcal{B}$ of ordered bases of size $b(G) - 1$.
        In particular, $\saxh$ is regular for transitive $G$.
    \end{lemma}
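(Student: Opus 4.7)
The plan is to count edges of $\saxh$ through $\alpha$ by reducing to counting bases of $G_\alpha$ on $\Omega \setminus \{\alpha\}$, and then to count those via an orbit-stabiliser argument.

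First, I would observe the following bijection. For any subset $S \subseteq \Omega \setminus \{\alpha\}$, the pointwise stabiliser of $\{\alpha\} \cup S$ in $G$ equals the pointwise stabiliser of $S$ in $G_\alpha$. Consequently, $\{\alpha\} \cup S$ is a base for $G$ of size $b(G)$ if and only if $|S| = b(G) - 1$ and $S$ is a base for $G_\alpha$ (acting on $\Omega \setminus \{\alpha\}$). Thus the degree of $\alpha$ in $\saxh$ equals the number of unordered bases of size $b(G) - 1$ for $G_\alpha$. Switching to the ordered count $\mathcal{B}$ introduces a factor of $(b(G)-1)!$, giving the first equality $d = |\mathcal{B}|/(b(G)-1)!$.

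Next, I would apply the orbit-stabiliser theorem to the action of $G_\alpha$ on $\mathcal{B}$ (well-defined because $G_\alpha$ permutes bases of $G_\alpha$ among themselves and preserves ordering). The key observation is that the stabiliser in $G_\alpha$ of an ordered base $(\beta_1, \ldots, \beta_{b(G)-1}) \in \mathcal{B}$ is trivial: any element fixing each $\beta_i$ lies in the pointwise stabiliser of $\{\beta_1, \ldots, \beta_{b(G)-1}\}$ in $G_\alpha$, which is trivial by definition of a base. Hence every $G_\alpha$-orbit on $\mathcal{B}$ has size exactly $|G_\alpha|$, so $|\mathcal{B}| = n |G_\alpha|$, yielding the second equality.

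Finally, for the regularity claim when $G$ is transitive, I would invoke part~\ref{lem21:aut} of \lem{lem21}: $G \le \Aut(\saxh)$ acts transitively on $\Omega$, and automorphisms preserve vertex degrees, so every vertex has the same degree $d$.

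There is no real obstacle here — the argument is essentially bookkeeping. The only point requiring slight care is ensuring the correct interpretation of $\mathcal{B}$ as ordered bases with distinct entries (so that the factor $(b(G)-1)!$ really is the number of orderings of an underlying unordered base, rather than being inflated by repetitions), but this is immediate since bases by definition consist of distinct points.
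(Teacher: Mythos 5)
Your proof is correct and follows essentially the same route as the paper: a double count of ordered bases of $G$ of size $b(G)$ beginning with $\alpha$ (equivalently, the bijection with bases of $G_\alpha$ of size $b(G)-1$), combined with the semiregular action of $G_\alpha$ on $\mathcal{B}$ and vertex-transitivity for the regularity claim. No gaps.
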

    \begin{proof}
        We count the number of ordered bases for $G$ of size $b(G)$ that
        begin with $\alpha$ in two ways. Firstly, each such base arises
        from an ordering of an edge $E \owns \alpha$ of $\saxh$ in which
        $\alpha$ comes first. There are $d$ such edges and $(b(G) - 1)!$
        such orderings of each. Secondly, each such base consists of
        $\alpha$ followed by an ordered base of $b(G) - 1$ elements
        for $G_\alpha$. There are
        $| \mathcal{B} |$ such bases, and $G_\alpha$ of course acts
        semiregularly on them with $n$ orbits.. Thus
        \[ d (b(G) - 1)! = | \mathcal{B} | = n |G_\alpha| \]
        as sought.
    \end{proof}

    The non-trivial denominator $(b(G) - 1)!$ in \lem{valency} makes questions
    around primality and parity of the valency more difficult to resolve
    than for $b(G) = 2$. Even more unfortunately, the number of  permutations of an ordered base $B$ containing $\alpha$ lying in the same $G_\alpha$-orbit as $B$ isn't necessarily consistent across choices of bases. 
    % \melc{Let's reword this--Melissa}{Even more unfortunately,
    % the case of $\MT{11}$ acting on 66 points demonstrates that
    % the subgroup of $\sym{b(G) - 1}$ which fixes one of the $n$ orbits
    % on bases of $G_\alpha$ is not always consistent between orbits}\footnote{
    %     :~In the example mentioned, $b(G) = 3$, and exactly 6 of
    %     the point stabiliser's $n = 22$ orbits on its bases are invariant
    %     under $\sym{2}$.
    % }. 
    We thus content ourselves with a discussion of $G$ for which
    $b(G) \in \{ 3, 4 \}$ is ``small,'' beginning with a classification
    of those for which $\saxh$ has prime valency:

    \begin{theorem}
        \label{thm:prime-val}
        Suppose that $G \le \Sym{(\Omega)}$ is a transitive group of
        base size $3$ or $4$, and that $\saxh$ has prime valency. Then
        $(G, |\Omega|) = (\sym{4}, 4)$.
    \end{theorem}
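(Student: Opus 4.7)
The plan is to apply \lem{valency}, which gives $d \cdot (b(G)-1)! = n\,|G_\alpha|$, and use the primality of $d$ together with the hypothesis $b(G) \ge 3$---the latter forces every point of $\Omega \setminus \{\alpha\}$ to have non-trivial stabiliser in $G_\alpha$. This rules out $|G_\alpha|$ being a prime (the unique non-trivial subgroup would have to fix every point and hence be trivial), and an elementary case analysis of small $|G_\alpha|$ eliminates $|G_\alpha| \in \{4,5,6,7\}$ when $b(G)=4$ (by checking that no subgroup lattice supplies non-trivial joint stabilisers for every pair while still admitting a base of size $3$) and $|G_\alpha| \in \{2,3\}$ when $b(G)=3$. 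The divisibility relation then collapses to $|G_\alpha|=2d$ with $n=1$ in the first case and $|G_\alpha|=6d$ with $n=1$ in the second, so $G_\alpha$ acts regularly on the $d\cdot(b(G)-1)!$ ordered $(b(G)-1)$-bases.

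For $b(G)=3$, let $O \subseteq \Omega \setminus \{\alpha\}$ denote the common $G_\alpha$-orbit of the entries of these ordered bases. The extreme values $|O|=2d$ (forcing $G_{\alpha,\beta}=1$ and $b(G)=2$) and $|O|=2$ (leaving too few pairs to accommodate $|G_\alpha|$ ordered bases) are immediately excluded, so $|O|=d$. Counting the ordered bases beginning at $\beta\in O$ forces the order-$2$ stabiliser $\langle \sigma_\beta \rangle$ of $\beta$ to fix exactly $d-2$ points of $O$. The crucial observation is that $\sigma_\beta$ is the unique non-trivial element of the stabiliser of any of its fixed points, so $\sigma_{\beta'}=\sigma_\beta$ whenever $\beta' \in \mathrm{fix}(\sigma_\beta)$; hence the fibres of $\beta \mapsto \sigma_\beta$ partition $O$ into blocks of size $d-2$, yielding $(d-2) \mid d$ and therefore $d=3$.

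When $d=3$, $G_\alpha \cong \sym 3$ acts naturally on $O$. Any further $G_\alpha$-orbit on $\Omega \setminus \{\alpha\}$ of size $2$ or a second size-$3$ orbit would contribute extra $G_\alpha$-orbits of ordered bases (contradicting $n=1$), so the remaining points of $\Omega \setminus \{\alpha\}$ form a set $F$ of $G_\alpha$-fixed points. Since $|F|+1 = |N_G(G_\alpha)/G_\alpha|$ divides $|G|/|G_\alpha|=|\Omega|=4+|F|$, one obtains $(|F|+1) \mid 3$; the case $|F|=0$ recognises $G=\sym 4$ on $4$ points, and $|F|=2$ is eliminated by a normaliser-closure argument showing any order-$36$ realisation inflates to $\sym 3 \wr \cyc 2$ of order $72$ (the block-swapping element $\sigma$ conjugates $G_\alpha$ to a second $\sym 3$-action on the fixed block $\{\alpha,\delta_1,\delta_2\}$, which is absent from the prescribed $N_G(G_\alpha) = \sym 3 \times \cyc 3$).

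For $b(G)=4$ the argument adds one further layer. The two-point stabiliser $G_{\alpha,\beta_0}$ has order $6$, acts with $b=2$ on $\Omega \setminus \{\alpha,\beta_0\}$, and is transitive on the three edges of its Saxl graph (since $G_\alpha$ is transitive on $3$-bases through $\beta_0$). A computation parallel to the previous paragraph shows that these edges form a $K_3$ on a $G_{\alpha,\beta_0}$-orbit $O' \subseteq O$ of size $3$, while every other $G_{\alpha,\beta_0}$-orbit on $\Omega \setminus \{\alpha,\beta_0\}$ is a fixed point (orbits of size $2$, $3$ or $6$ elsewhere would produce extra $G_{\alpha,\beta_0}$-Saxl edges, via the same intersection-of-stabilisers computation). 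Consequently $O \setminus \{\beta_0\} = O' \sqcup (F' \cap O)$, and $|F' \cap O|+1 = |N_{G_\alpha}(G_{\alpha,\beta_0})/G_{\alpha,\beta_0}|$ divides $|G_\alpha|/|G_{\alpha,\beta_0}| = d$; substituting $|O|=d$ yields $(d-3) \mid d$, so $d \in \{4,6\}$, neither prime. The main obstacle throughout is the normaliser-closure step ruling out $|F|=2$ in the $b(G)=3, d=3$ subcase.
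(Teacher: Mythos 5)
Your proof is correct in substance and takes a genuinely different route from the paper's. Both arguments begin with \lem{valency} and use primality of $d$ to force $n=1$ and $|G_\alpha| = d\,(b(G)-1)!$, but they diverge immediately afterwards. The paper proves a small induction lemma on groups whose order has at most $b(H)$ prime factors, deduces that $G_\alpha$ has a single orbit of prime size $p$ on the non-fixed points and that its fixed-point set is a block of size dividing $p$, concludes that $G$ is $2$-transitive of degree $p+1$, and then works through the CFSG classification of finite $2$-transitive groups family by family (affine, linear, symplectic, unitary, Suzuki/Ree, sporadic). You instead stay entirely local: each $\sigma_\beta$ is shown to move exactly two points, the fibres of $\beta\mapsto\sigma_\beta$ partition the orbit $O$ into pieces of size $d-2$, giving $(d-2)\mid d$, and for $b(G)=4$ the same analysis one level down in $G_{\alpha,\beta_0}\cong\sym{3}$ gives $(d-3)\mid d$; the surviving case $b(G)=3$, $d=3$ is settled by the fixed-point count $|N_G(G_\alpha):G_\alpha|$ dividing $|\Omega|$ together with a block argument killing the putative degree-$6$, order-$36$ group. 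Your version is more elementary and avoids the $2$-transitive classification entirely, at the cost of a longer bare-hands analysis; the paper's is shorter once that classification is granted, and reuses \thm{complete} to evaluate the surviving valencies.

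Two points need tightening. First, since $|G_\alpha|$ divides $6d$ when $b(G)=4$, the orders you must eliminate are not just $\{4,5,6,7\}$ but also $9$, $2d$ and $3d$ for the relevant primes $d$; fortunately the subgroup-lattice argument you describe covers these uniformly (in a group of order $p^2$ or $pq$, any two proper non-trivial subgroups either coincide or intersect trivially, so requiring non-trivial joint stabilisers for all pairs forces a non-trivial kernel of the faithful action), and your ``first case/second case'' labels for $|G_\alpha|=2d$ versus $6d$ are transposed. Second, the order-$36$ elimination is only sketched; the clean version is that the two blocks $\{\alpha,\delta_1,\delta_2\}$ and $O$ have pointwise stabilisers in $G$ isomorphic to $\sym{3}$ and $\cyc{3}$ respectively, and a block-swapping element would conjugate one onto the other, which is impossible. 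With these repairs the argument is complete.
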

    \begin{proof}
        We begin by proving the simple result that, if the order of
        $H \le \Sym{(\Omega)}$ has $n \le b(H)$ prime factors (counting
        repetitions), any $\alpha \in \Omega$ not fixed by $H$ lies
        in an $H$-orbit of prime order, and also a base of minimal size $n = b(H)$
        for $H$. Indeed, this is clear for $n = 0$. Moreover, if it holds for all $n$
        less than or equal to some fixed $k$,
        then the stabiliser of any non-fixed point $\alpha \in \Omega$
        in a group $H$ for which $|H|$ has $k + 1 \le b(H)$ prime factors will
        itself have an order $| H_\alpha | = | H |/| \alpha^H |$
        with $k + 1 - m \le k \le b(H) - 1 \le b(H_\alpha)$ prime factors,
        where $m \ge 1$ is the number of prime factors of $| \alpha^H |$.
        It follows that equality holds throughout, proving that
        $| \alpha^H |$ is prime, that $b(H) = k + 1$, and (since
        $b(H_\alpha) = b(H) - 1$) that $\alpha$ lies in a base for $H$ of
        minimal size. Strong induction yields the overall claim. \\

        Now, the left-hand side of the valency formula
        \[ d (b(G) - 1)! = n |G_\alpha| \]
        has exactly $b(G_\alpha) = b(G) - 1$ prime factors if
        $d$ is prime and $b(G) \le 4$. The fact that $|G_\alpha|$ divides
        the right-hand side thus allows the application of the result in
        the preceding paragraph; as a further consequence of
        the divisibility relation, $| G_\alpha | = d (b(G) - 1)!$, and $G_\alpha$
        has $n = 1$ orbits on its bases of minimal size. This single orbit gives
        rise to a single orbit on all points of $\Omega$ that appear
        in such bases, which the applied result moreover shows is
        a transitive orbit $\Delta$ of prime size $p$ on all points not fixed
        by $G_\alpha$. Note that $\Delta' = \Omega \backslash \Delta$
        consists of the fixed points of $G_\alpha$, a block of imprimitivity,
        so that $\Delta$ is a union of blocks. \\

        Since $| \Delta | = p$ is prime, we have either $| \Delta' | = p$
        or $| \Delta' | = 1$. In the former case, the transitivity of
        $G$ would guarantee that $G_{(\Delta)}$ is permutation isomorphic
        to $G_{(\Delta')} = G_\alpha$. It follows that there would be
        a two-point stabiliser in $G$ equal to $G_{(\Delta)} \cap G_{(\Delta')} =
            G_{(\Omega)} = 1$, contravening the requirement that
        $b(G) \in \{ 3, 4 \}$. The only possibility is therefore $| \Delta' | = 1$
        and $\Delta = \Omega \backslash \Delta' = \Omega \backslash \{ \alpha \}$,
        so that $| \Omega | = p + 1$ and $G$ is 2-transitive.

        Our task consequently reduces to a search through the list of
        2-transitive finite groups in \cite[\S7.7]{perm_grp}
        (cf.\ the proof of \thm{complete}). Groups with socles $\symp{2m}{2}$,
        $\psu{3}{q}$, $\suz{q}$, $\ree{q}$, and $\psl{n}{q}$ can largely be
        discounted on the basis that $| \Omega | - 1$ takes composite values
        $2^{m-1} (2^m \pm 1) - 1 = (2^{m-1} \pm 1) (2^m \mp 1)$, $q^3$, $q^2$,
        and $q^3$, respectively. The sole exceptions are $\symp{2}{2} \cong \sym{3}$
        and $\symp{4}{2} \cong \sym{6}$ in their actions on 3 and 6 points,
        neither of which has base size 3 or 4, and
        $\psl{n}{q}$ in its action on the projective line in the case 
        that $n = 2$ with $q$ prime. Primality of $q$ ensures
        $\psl{2}{q} \le G \le \pgl{2}{q}$, so that $b(G) = 2 + \ind{q \neq 2}$
        and point stabilisers in $G$ have order $q (q - 1)/(q - 1, 2)$
        (if $G = \psl{2}{q}$) or $q (q - 1)$ (if $G = \pgl{2}{q}$).
        To have $b(G) = 3$ and point stabiliser orders of the form
        $(b(G) - 1)! d = 2d$, where $d$ is prime, $G$ must be one of
        $\pgl{2}{3} \cong \sym{4}$ or $\psl{2}{5} \cong \alt{5}$.
        It can conversely be verified that the former, but not the latter,
        gives rise to $\saxh$ with prime valency via the result of
        \thm{complete} that $\saxh$ is complete. \\

        As for affine groups $G$ with socle $\vsp{n}{q}$, write $q = \pi^e$
        ($\pi$ prime). Primality of $| \Omega | - 1 = q - 1 = \pi^{en} - 1$
        shows that either $\pi = 3$ and $e = n = 1$ or $\pi = 2$ and
        $2^{en} - 1$, hence also $en$, is prime. The former scenario does not
        yield suitable groups $G$, as $G \le \vsp{1}{3^1} {:} \gammal{1}{3^1}
            \cong \sym{3}$ implies $b(G) \le 2$. The latter, by contrast, produces
        several candidates corresponding to distinct subcases. \\

        First suppose $n = 1$. Then $e$ is prime. It follows that
        the point stabilisers $G_0 \le \gammal{1}{q}$ of $G$ have order
        dividing the product of two primes $| \gammal{1}{q} | = (2^e - 1) e$;
        on the other hand, $| G_0 | = (b(G) - 1)! d \in \{ 2d, 6d \}$
        for prime $d$, so that $b(G) = 3$ and $\{ 2^e - 1, e \} = \{ 2, d \}$.
        Thus $G = \vsp{1}{2^2} {:} \gammal{1}{2^2} \cong \sym{4}$ acts
        on 4 points. \\

        We otherwise have $n \neq 1$. Note that $n$ is then prime and
        $e = 1$. The corresponding possibilities for $G_0$
        can be determined by sorting through the various classes of
        2-transitive affine groups. If
        $\lins{n}{2} \le G_0 \le \gammal{n}{2} =
            \lins{2}{2}$, the order of $G_0$ is divisible by $| \lins{n}{2} | =
                (2^n - 2^0)(2^n - 2^1) \ldots (2^n - 2^{n-1})$. This has
        more than three prime factors if $n > 2$, contradicting
        the earlier result $| G_0 | \in \{ 2d, 6d \}$, while $n = 2$
        yields $G = \vsp{2}{2} {:} \ling{2}{2} \cong \sym{4}$. As for when
        $G_0 \ge \symp{n}{2}$ or $G_0 \le \gammal{n}{2} = \gammal{6}{2}$,
        the consequence that $n$ is even again produces
        $G = \vsp{2}{2} {:} \symp{2}{2} \cong \sym{4}$. Finally,
        the sporadic examples are immediately ruled out on the basis that
        none have $q = 2$ and $n$ prime. \\

        To complete the proof, the symmetric, alternating, and
        unclassified 2-transitive groups can be filtered on the basis of
        the requirement that $| \Omega | - 1$ is prime and $b(G) \in \{ 3, 4 \}$.
        This leaves only the possibilities listed in the statement of the theorem
        and $(G, | \Omega |) \in \{ (\alt{6}, 6), (\psl{2}{11}, 12),
            (\pgl{2}{11}, 12) \}$ to consider; all have complete Saxl hypergraphs
        by \thm{complete}, so the relevant valencies are easily evaluated
        as $\binom{| \Omega | - 1}{b(G) - 1}$.
    \end{proof}

    Further complications arise when investigating Eulerian circuits in $\saxh$, as was done for Saxl graphs in \cite[\S3.2]{burngiu}.
    As it happens, even valency is not sufficient (nor even necessary!)
    for the existence of an Eulerian circuit in a hypergraph:
    the problem of finding such a circuit is in general NP-complete, although
    this may not apply in the restricted case of Saxl hypergraphs. What can be
    said is that even vertex degrees and an even number of vertices are instead necessary
    and sufficient for a \text{flag-spanning tour} (a walk which uses each flag
    $(v \in e, e)$ exactly once), as defined in Bahmanian and Shan \cite{bah_shan}.
    \thm{flag-span} partially resolves this more tractable problem for $\saxh$
    corresponding to a \textsl{primitive} group $G$
    with $b(G) \in \left\{ 3, 4 \right\}$. For clarity's sake, we treat some of
    the more difficult almost simple groups in a lemma preceding the main proof. \\

    Frequent use will be made of the following simple combinatoric result:

    \begin{lemma}
        \label{lem:magiclemma}
        Let $S$ be a set, $E$ a set of properties, and $S_T$ denote
        the set of elements in $S$ satisfying at least
        the properties $T \subseteq E$. If $k, n \in \mathbb{N}$ and
        $X \subseteq \mathcal{P} (E)$ are such that
        $|S_T| \equiv k + n \ind{T \in X} \textrm{ mod } 2n$
        for any $T$ --- including $T = \emptyset$ --- then the number of elements
        in $S$ which satisfy no property in $E$ is congruent
        to $k \ind{E = \emptyset} + n \ind{|X| \textrm{ odd}}$ modulo $2n$.
        In particular, for $X = \emptyset$, this is $k$ if $E = \emptyset$
        and 0 otherwise.
    \end{lemma}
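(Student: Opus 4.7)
The plan is to prove this via inclusion–exclusion, reducing it to a pair of parity computations modulo $2n$.

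First I would write the number of elements of $S$ satisfying no property in $E$ as
\[
    N = \sum_{T \subseteq E} (-1)^{|T|} \, |S_T|,
\]
which is the standard inclusion–exclusion identity (with $S_\emptyset = S$ accounted for by $T = \emptyset$). Substituting the hypothesis $|S_T| \equiv k + n \ind{T \in X} \pmod{2n}$ gives
\[
    N \equiv k \sum_{T \subseteq E} (-1)^{|T|} + n \sum_{T \in X} (-1)^{|T|} \pmod{2n}.
\]

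Next I would evaluate the two sums separately. For the first, the binomial identity $\sum_{T \subseteq E} (-1)^{|T|} = (1-1)^{|E|}$ equals $1$ when $E = \emptyset$ and $0$ otherwise, giving the contribution $k \ind{E = \emptyset}$. For the second, each summand $(-1)^{|T|}$ is $\pm 1$, so the sum has the same parity as $|X|$; hence
\[
    n \sum_{T \in X} (-1)^{|T|} \equiv n \, |X| \equiv n \ind{|X| \text{ odd}} \pmod{2n},
\]
using that $n \cdot (2m) \equiv 0 \pmod{2n}$. Combining the two contributions yields the stated congruence. The final ``in particular'' clause is the case $X = \emptyset$, for which $|X| = 0$ is even and only the $k \ind{E = \emptyset}$ term survives.

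There is no real obstacle here: the only point requiring care is the mod-$2n$ arithmetic on the second sum, where one must observe that $\sum_{T \in X}(-1)^{|T|}$ and $|X|$ differ by an even integer (since swapping a $+1$ for a $-1$ changes the sum by $\pm 2$), so multiplying by $n$ lands inside $2n \mathbb{Z}$. Everything else is a routine application of inclusion–exclusion and the elementary identity $(1-1)^{|E|} = \ind{E = \emptyset}$.
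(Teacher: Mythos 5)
Your proposal is correct and matches the paper's own argument essentially step for step: both apply inclusion--exclusion, collapse the first sum via $(1-1)^{|E|}$, and reduce the second sum modulo $2n$ to $n\,|X|$ using the fact that $\pm n \equiv n \pmod{2n}$. No further comment is needed.
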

    \begin{proof}
        By the Inclusion--Exclusion Principle, the number of elements
        in $S$ which satisfy no property of $T$ is

        \begin{align*}
            \sum_{T \subseteq E} (-1)^{|T|} | S_T | & \equiv
            \sum_{T \subseteq E} (-1)^{|T|} (k + n \ind{T \in X}) \\
            & = \sum_{T \subseteq E} (-1)^{|T|} k +
                \sum_{T \in X} (-1)^{|T|} n \\
            & \equiv \sum_{i=0}^{|E|} \binom{|E|}{i} (-1)^i k +
                \sum_{T \in X} n \\
            & = n|X| + \left\{ \begin{array}{ll}
                k, & |E| = 0, \\
                k (1-1)^{|E|} = 0, & \textrm{otherwise,}
            \end{array} \right. \\
            & \equiv k \ind{E = \emptyset} + n \ind{|X| \textrm{ odd}},
        \end{align*}

        where all congruences are modulo $2n$.
    \end{proof}

    \begin{lemma}
        \label{lem:flag-span-simp}
        Let $G \le \Sym{(\Omega)}$ be one of the almost simple groups
        in \tbl{nofstsimp}. Then $\saxh$ has odd valency if and only if
        $G = \pgammal{2}{2^e}$ has a subgroup of type $\ling{1}{q} \wr \sym{2}$
        for a point stabiliser, with $e$ a square-free odd integer, or
        $G \in \{ \psl{2}{q}, \pgl{2}{q} \}$ has a parabolic point stabiliser of
        type $P_1$, where $q \equiv 3 \textrm{ mod } 4$.
    \end{lemma}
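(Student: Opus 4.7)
The plan is to work through the rows of \tbl{nofstsimp} one by one, deciding the parity of the valency $d$ of $\saxh$ using \lem{valency}. Writing $d \cdot (b(G)-1)! = |\mathcal{B}|$, where $\mathcal{B}$ denotes the set of ordered bases of $G_\alpha$ of size $b(G)-1$ in $\Omega \setminus \{\alpha\}$, the hypothesis $b(G) \in \{3,4\}$ fixes $(b(G)-1)!$ at $2$ or $6$, whose $2$-part is in both cases just $2$. Oddness of $d$ is therefore equivalent to the condition $|\mathcal{B}| \equiv 2 \pmod{4}$.

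To control $|\mathcal{B}|$ modulo $4$, I would apply \lem{magiclemma} with $2n = 4$. Taking $S = (\Omega \setminus \{\alpha\})^{b(G)-1}$ and, for each coordinate index $i$ and each suitable non-trivial $g \in G_\alpha$, the property that the $i$th coordinate is fixed by $g$, an ordered base is precisely a tuple in $S$ avoiding every such property. The $2$-adic valuation of $|\mathcal{B}|$ is thereby controlled by fixed-point counts for conjugacy classes of $2$-elements of $G_\alpha$ acting on $\Omega \setminus \{\alpha\}$, which for each entry of the table can be read off from the subgroup structure data in \cite{k+l} or the relevant Atlas information.

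I would then verify the ``if'' direction by direct computation. For $G \in \{\psl{2}{q}, \pgl{2}{q}\}$ with a $P_1$ stabiliser and $q \equiv 3 \pmod 4$, the Borel $G_\alpha$ acts sharply $2$-transitively on the remaining $q$ projective points; a careful enumeration of ordered $(b(G)-1)$-tuples on which no non-trivial element has a fixed point yields $|\mathcal{B}| \equiv 2 \pmod 4$, aided by the fact that $(q-1)$ contributes only a single factor of $2$ when $q \equiv 3 \pmod 4$. For $G = \pgammal{2}{2^e}$ with stabiliser of type $\ling{1}{q} \wr \sym{2}$ acting on unordered pairs of distinct projective points, the hypothesis that $e$ is square-free and odd ensures $\Aut(\mathbb{F}_{2^e})$ has odd order, so the only source of involutions in $G_\alpha$ is the wreathing transposition, and the parity check again succeeds.

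The hard part will be the ``only if'' direction for the $\pgammal{2}{2^e}$ family: when $e$ is either even or contains a repeated prime factor, extra involutions appear in $\Aut(\mathbb{F}_{2^e})$ and their interaction with the wreath structure. These additional involutions each contribute further factors of $2$ to $|\mathcal{B}|$ via the inclusion--exclusion count of \lem{magiclemma}, forcing $|\mathcal{B}| \equiv 0 \pmod 4$ and hence $d$ even. Enumerating the relevant involution classes and computing their fixed-point sets on pairs of projective points is the principal technical obstacle, since one must carefully distinguish between involutions living in the base group, in the wreathing transposition, and those mixing the two. The remaining rows of \tbl{nofstsimp} are handled analogously: in each case, either the $2$-part of $|G_\alpha|$ alone exceeds~$2$, or a specific involution class whose fixed-point contribution forces $|\mathcal{B}|$ into $4\mathbb{Z}$ can be identified, and small residual cases are dispatched computationally.
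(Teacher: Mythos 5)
Your high-level strategy --- reduce to $|\mathcal{B}| \bmod 4$ via \lem{valency} and control this by inclusion--exclusion over fixed-point data --- is in the right spirit, but two of your key steps fail. First, your application of \lem{magiclemma} is mis-specified: if the properties are indexed by pairs $(i,g)$ with ``the $i$th coordinate is fixed by $g$'', then a tuple avoiding \emph{every} property is one all of whose coordinates have trivial stabiliser in $G_\alpha$; this is not the definition of an ordered base (which requires only that the \emph{intersection} of the coordinate stabilisers be trivial), and in fact for $b(G)\ge 3$ no point of $\Omega\setminus\{\alpha\}$ has trivial stabiliser in $G_\alpha$ (such a point would yield a base of size $2$), so your count returns $0$. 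The correct set-up indexes properties by non-trivial elements or subgroups of $G_\alpha$ with the property ``fixes the whole tuple'', and even then the hypothesis of \lem{magiclemma} --- that every intersection count is congruent to $k+n\ind{T\in X}$ modulo $2n$ --- must be verified, which is exactly where the work lies. The paper only ever applies \lem{magiclemma} locally, to count field elements or pairs not fixed by any non-trivial field automorphism, where the relevant fixed sets are subfields of controlled cardinality modulo $4$ or $8$.

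Second, your proposed mechanism for the ``only if'' direction in the $\pgammal{2}{2^e}$ family is wrong. When $e$ is odd but not square-free (say $e=9$), $\Aut{(\mathbb{F}_{2^e})}\cong\cyc{e}$ has odd order and hence contains \emph{no} involutions, so no ``extra involutions appear''; moreover your argument for the ``if'' direction uses only oddness of $e$, and would therefore wrongly conclude odd valency for $e=9$ as well. The reason square-freeness actually enters is different: after reducing (via a pair of commuting fixed-point-free involutions acting on $\mathcal{B}$) to counting, modulo $8$, the ordered pairs $(\alpha,\beta)$ not fixed by any non-trivial field automorphism, an inclusion--exclusion shows this count is a multiple of $8$ exactly when the number of subsets of $\cyc{e}\setminus\{1\}$ that \emph{generate} $\cyc{e}$ is even, and the resulting sum $\sum_H 2^{e/\prod_{p\in H}p-1}(-1)^{|H|}$ over sets $H$ of prime divisors of $e$ is odd precisely when $e$ is square-free. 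Without this (or an equivalent) count your case analysis cannot separate $e=3$ from $e=9$. Finally, the remaining rows of \tbl{nofstsimp} --- the Ree groups, the $\ling{1}{q^2}$-type stabilisers, and the $b(G)=4$ subcase of the $P_1$ family --- each require bespoke arguments in the paper (for instance an order-$4$ fixed-point-free permutation of $\mathcal{B}$ in the last case, and an identification with a unitary action in the second), and are not dispatched by a single involution-class computation.
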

    \begin{table}
        \caption{\label{tbl:nofstsimp}Almost simple groups
            $T \le G \le \Aut{(T)}$ acting with point stabiliser
            $H \ge H \cap T$ such that $b(G) \in \{ 3, 4 \}$ and
            $4 \nmid |H|$.}
        \begin{tabular}{lll}
        \toprule
            $T$ & $H \cap T$ & Comments \\ \midrule
            $\ree{q}$ & $\cyc{q^3} {:} \cyc{q-1}$ &
                $q = 3^{2n+1} \ge 27$ \\ 
            $\psl{2}{q}$ & $\cyc{q} {:} \cyc{(q-1)/2}$ &
                $q$ odd; type $P_1$ \\ 
            $\psl{2}{q}$ & $\dih{2(q-1)}$ & $q$ even, $G \neq T$; type
                $\ling{1}{q} \wr \sym{2}$ \\ 
            $\psl{2}{q}$ & $\dih{2(q+1)}$ & $q$ even; type
                $\ling{1}{q^2}$\\
            $\psl{3}{3}$ & $\cyc{13} {:} \cyc{3}$ & $G = \psl{3}{3}.2$;
                type $\ling{1}{q^3}$ \\
                \bottomrule
        \end{tabular}
    \end{table}
    \begin{proof}
            Computations show $\psl{3}{3}.2$ gives rise to valency 9750, so
            that it remains to consider the four infinite families.
            Note that, whenever projective groups over a field $\mathbb{F}_q$
            are discussed below, the points of the projective line will be
            identified in the natural way
            with $\mathbb{F}_q \cup \{ \infty \}$. \\

            If $\psl{2}{q} \le G \le \pgammal{2}{q}$
            (where $q$ is odd) has a point stabiliser of type $P_1$,
            then $\Omega$ may be identified with the projective line.
            Per \cite[Remark 9.2(ii)]{simpsoltab}, $b(G) = 3$ if $G$ is
            one of the groups in case~\ref{complete:lin} of \thm{complete} and
            $b(G)=4$ otherwise. In the former eventuality, \thm{complete} establishes that
            $\saxh$ is complete, and thus the claim in case~\ref{flag-span:lin1}
            that its valency $\binom{| \Omega | - 1}{2} = q(q-1)/2$ is odd if
            and only if $q \equiv 3 \textrm{ mod } 4$ (which, in particular,
            eliminates the possibility that $q$ is a perfect square).
            As for when $b(G) = 4$,
            first consider the set $\mathcal{B}$ of 2-element ordered bases
            for $G_{0, \infty}$. These have the form $( \alpha, \beta )$
            for certain $\alpha, \beta \in \mathbb{F}_q$.
            Since the pointwise stabiliser in $\pgammal{2}{q}$ of a triple
            $( 0, \infty, \alpha )$
            is identical to that of
            $( 0, \infty, -\alpha )$, the map
            $f : \, (\alpha, \beta) \mapsto (-\beta, \alpha)$
            permutes $\mathcal{B}$. Clearly $f^4 = 1$, while the odd order of
            $\mathbb{F}_q$ means $f^2$ has a unique fixed point
            $B_{0, 0} \notin \mathcal{B}$. Hence $| \mathcal{B} |$ is
            divisible by 4. The 2-transitivity of the action of $G$
            implies that there are $(|\Omega| - 1) | \mathcal{B} |$
            bases of $G_0$ with size $b(G) - 1 = 3$, in turn yielding
            even valency
            $(|\Omega| - 1) | \mathcal{B} |/(b(G) - 1)! = q | \mathcal{B} |/6$ of
            $\saxh$. \\

            For $G$ with socle a Ree group in the indicated
            2-transitive action, recall
            from the proof of \thm{complete} that $b(G) = 3$ and that
            there is a permutation representation on the set
            $\mathbb{F}_q^3 \cup \{ \infty \}$ (where $q = 3^{2n+1}$).
            The pointwise stabiliser of $\{ 0, \infty \}$
            in this representation consists of the permutations
             \[ n_\kappa : \, (\eta_1, \eta_2, \eta_3) \mapsto (
                \kappa \eta_1, \kappa^{\sqrt{3q} + 1} \eta_2,
                \kappa^{\sqrt{3q} + 2} \eta_3
            ), \qquad \infty \mapsto \infty \]
            (where $\kappa \in \mathbb{F}_q^{\times}$) composed
            with field automorphisms $f \in \Aut{(\mathbb{F}_q)} \cap G$.
            Bearing this in mind, the set $\mathcal{B} \subseteq
                \mathbb{F}_q^3 \cup \infty$ of $v = (\eta_1, \eta_2, \eta_3)$
            for which $( 0, \infty, v )$ is an ordered basis of $G$ divides
            into 3 classes:

            \begin{itemize}
                \item If $\eta_1 \neq 0$, then the stabiliser of
                    $\{ 0, \infty, v \}$ consists of
                    those $n_\kappa f$ for which the triple
                    $(\kappa f(\eta_1), \kappa^{\sqrt{3q} + 1} f(\eta_2),
                        \kappa^{\sqrt{3q} + 2} f(\eta_3))$ equals
                    $(\eta_1, \eta_2, \eta_3)$. Comparison of
                    the first coordinates shows $\kappa = \eta_1 f(\eta_1)^{-1}$,
                    which upon substitution into the second and third terms
                    produces the requirements that $\eta_2 \eta_1^{-(\sqrt{3q}+1)}
                        = f(\eta_2 \eta_1^{-(\sqrt{3q}+1)})$ and
                    $\eta_3 \eta_1^{-(\sqrt{3q}+2)}
                        = f(\eta_3 \eta_1^{-(\sqrt{3q}+2)})$. The number of
                    points in $\mathcal{B}$ corresponding to each $\eta_1$ is thus
                    equal to the number of pairs $(\alpha, \beta) \in
                        \mathbb{F}_q^2$ not fixed by any non-trivial
                    $f \in \Aut{(\mathbb{F}_q)} \cap G$. For any set $S$
                    of such field automorphisms (empty or not), however,
                    the pairs which \textsl{are} fixed
                    by the automorphisms in $S$ are precisely those
                    fixed by a generator of $\langle S \rangle \le
                        \Aut{(\mathbb{F}_q)} \cong \cyc{2n+1}$,
                    which amount to $| \mathbb{F}_Q |^2 \equiv 1
                        \textrm{ mod } 4$ pairs
                    for some $\mathbb{F}_Q \le \mathbb{F}_q$.
                    It follows from \lem{magiclemma} that
                    the number of $v$ of interest for each $\eta_1$
                    is congruent to 1 modulo 4 if $G$ contains
                    no non-trivial field automorphisms and 0 otherwise;
                    the number over all $\eta_1 \neq 0$ is thus congruent
                    to $(q - 1) \cdot 1 \equiv 2$ in the former situation,
                    and $(q - 1) \cdot 0 \equiv 0$ in the latter.
                \item A similar argument applies when $\eta_1 = 0$,
                    but $\eta_3 \neq 0$.  The first coordinates of the triples
                    $(\kappa f(0), \kappa^{\sqrt{3q} + 1} f(\eta_2),
                        \kappa^{\sqrt{3q} + 2} f(\eta_3))$ and
                    $(0, \eta_2, \eta_3)$ are equal for any $n_\kappa f$,
                    while the last two coordinates are equal if and only if
                    $\kappa = \kappa^{-(\sqrt{3q} + 2)(\sqrt{3q} - 2)} =
                    (\eta_3 f(\eta_3)^{-1})^{(2 - \sqrt{3q})}$ and
                    $\eta_2 \eta_3^M = f(\eta_2 \eta_3^M)$, where
                    $M = (\sqrt{3q} - 2)(\sqrt{3q} + 1)$ is fixed.
                    For each $\eta_3$, the choices of $\eta_2$ which give
                    $v \in \mathcal{B}$ are thus equinumerous with
                    the $\alpha \in \mathbb{F}_q$ not fixed by any non-trivial
                    $f \in \Aut{(\mathbb{F}_q)} \cap G$. Any set $S$
                    of field automorphisms fixes $| \mathbb{F}_Q | \equiv 3
                        \textrm{ mod } 4$ such $\alpha$ for some
                    $\mathbb{F}_Q \le \mathbb{F}_q$ (noting that
                    $Q^n = q \equiv 3 \textrm{ mod } 4$ for some $n$),
                    so that an application of \lem{magiclemma} and
                    enumeration over all $\eta_3 \neq 0$ yields
                    $(q - 1) \cdot 3 \equiv 2 \textrm{ mod } 4$
                    elements of $\mathcal{B}$ if $G$ contains
                    no non-trivial field automorphisms, and
                    $0 \textrm{ mod } 4$ otherwise.
                \item Finally, if $\eta_1 = \eta_3 = 0$,
                    the fact that $\sqrt{3q} + 1$ is even
                    ensures $n_{-1}$ fixes $v$. This case thus makes
                    no contribution.
            \end{itemize}

            Summing over all three cases reveals that $| \mathcal{B} |$ is
            divisible by 4, which in a similar manner to above implies
            the valency $(| \Omega |- 1) | \mathcal{B} |/(b(G) - 1)! =
                q^3 | \mathcal{B} |/2$ of $\saxh$ is even. \\

            The final two classes of groups in \tbl{nofstsimp}, which have
            base size $b(G) = 3$, will be addressed with the assistance
            of a slightly generalised argument, which we will apply in each case.
            
            \begin{clam}
            \label{clam_1}
                Let $H = \langle \tilde{H}, \mathcal{A} \rangle$, where
            $\mathcal{A} \le \Aut{(\mathbb{F}_q)}$ for some even $q$ and
            $\tilde{H} \le \pgl{2}{q}$ contains (the projection of)
            \[ \iota = \left( \begin{array}{ll} 0 & 1 \\ 1 & 0 \end{array} \right). \]
            Suppose that $H$ acts on a set $S$ of pairs of projective points
            such that $\tilde{\alpha} = \{ 0, \infty \} \in S$ and
            $\{ 0, \beta \} \in S$ is equivalent to $\{ \beta, \infty \} \in S$
            for any $\beta \notin \tilde{\alpha}$. If $b(H_{\tilde{\alpha}}) = 2$,
            %we seek to prove that 
            then the set of 2-element ordered bases $\mathcal{B}$
            for $H_{\tilde{\alpha}}$ has $|\mathcal{B}| \equiv 0 \textrm{ mod } 4$ if and
            only if  either 
            \begin{enumerate}
                \item $| \mathcal{A} |$ is even,  or
                \item the number of ordered pairs
            $(\alpha, \beta) \in
                S \cap (\mathbb{F}_q^{\times})^2$, with $\beta \neq \alpha^{\pm1}$, not fixed
            by any non-trivial $f \in \mathcal{A}$ is a multiple of 8.
            \end{enumerate}  
            \end{clam}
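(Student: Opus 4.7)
The plan is to exhibit a natural Klein four-group action on $\mathcal{B}$, use its orbit structure to reduce $|\mathcal{B}| \bmod 4$ to a combinatorial count, and then analyse that count via inclusion--exclusion over $\mathcal{A}$ using \lem{magiclemma}.

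First I would let $V = \langle \sigma, \iota \rangle$ act on $\mathcal{B}$, with $\sigma$ swapping the two coordinates of an ordered pair and $\iota$ acting diagonally by the projective involution $z \mapsto 1/z$ (which lies in $\tilde{H} \le H$ by hypothesis and stabilises $\tilde{\alpha}$). Both generators preserve $\mathcal{B}$: $\sigma$ trivially so, while $\iota$-conjugation carries $H_{\tilde{\alpha}, \tilde{\beta}_1, \tilde{\beta}_2}$ isomorphically onto $H_{\tilde{\alpha}, \iota \tilde{\beta}_1, \iota \tilde{\beta}_2}$. Next I would analyse orbit sizes: the $\sigma$-fixed condition forces $\tilde{\beta}_1 = \tilde{\beta}_2$, the $\iota$-fixed condition forces $\iota$ into the pointwise stabiliser of the base, and both are excluded; only the $\sigma\iota$-fixed condition $\tilde{\beta}_2 = \iota \tilde{\beta}_1 \ne \tilde{\beta}_1$ is admissible, yielding size-$2$ orbits of the form $\{(\tilde{\beta}, \iota \tilde{\beta}), (\iota \tilde{\beta}, \tilde{\beta})\}$. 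Counting then gives $|\mathcal{B}| \equiv N \pmod 4$, where $N$ is the number of $\tilde{\beta} \in S \setminus \{\tilde{\alpha}\}$ with $\iota \tilde{\beta} \ne \tilde{\beta}$ and $H_{\tilde{\alpha}, \tilde{\beta}, \iota \tilde{\beta}} = 1$.

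I would then split $N = N_{\mathrm{I}} + N_{\mathrm{II}}$, where $N_{\mathrm{I}}$ counts those $\tilde{\beta}$ meeting $\{0, \infty\}$ in exactly one point (so $\tilde{\beta} = \{0, b\}$ or $\{\infty, b\}$ for $b \in \mathbb{F}_q^\times$) and $N_{\mathrm{II}}$ counts those $\tilde{\beta} = \{\alpha, \beta\} \subset \mathbb{F}_q^\times$ with $\alpha\beta \ne 1$. In each case, decompose $H_{\tilde{\alpha}} = T_{\tilde{H}} \langle \iota \rangle \cdot \mathcal{A}$ and determine the conditions under which a general element of this group fixes both $\tilde{\beta}$ and $\iota \tilde{\beta}$. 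The key computational input is that in characteristic $2$ the equation $x^2 = c$ has at most one solution, which forces the entire $\iota$-coset and non-identity torus elements to contribute nothing to the stabiliser. The triviality of $H_{\tilde{\alpha}, \tilde{\beta}, \iota \tilde{\beta}}$ thereby reduces to the condition that no non-trivial $f \in \mathcal{A}$ fixes $b$ (Type I) or fixes $\{\alpha, \beta\}$ setwise (Type II). The hypothesis on $S$ pairs up $\{0, b\}$ with $\{0, 1/b\}$, so the involution $b \mapsto 1/b$ (with sole fixed point $b = 1$, necessarily fixed by all of $\mathcal{A}$) gives $N_{\mathrm{I}} \equiv 0 \pmod 4$.

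The remaining step is to extract $N_{\mathrm{II}} \bmod 4$, which is where \lem{magiclemma} enters. Apply it with the ordered pairs $(\alpha, \beta) \in S \cap (\mathbb{F}_q^\times)^2$, $\beta \ne \alpha^{\pm 1}$, as the underlying set and the non-trivial elements of $\mathcal{A}$ as the properties. Each subset $T$ of properties contributes the count of ordered pairs fixed by a generator of $\langle T \rangle \le \Aut(\mathbb{F}_q) \cong \cyc{e}$, which equals a squared fixed-field size, and hence an even integer exactly when that fixed field is proper and has even degree. When $|\mathcal{A}|$ is even, every non-empty $T$ of interest gives an even contribution to the congruence, forcing automatic cancellation and $N_{\mathrm{II}} \equiv 0 \pmod 4$. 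When $|\mathcal{A}|$ is odd, the cancellation fails, and the factor of $2$ relating unordered $\tilde{\beta}$ to the claim's ordered-pair count converts $N_{\mathrm{II}} \equiv 0 \pmod 4$ to the ordered-pair count being $\equiv 0 \pmod 8$.

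The main obstacle is the final stage: one must carefully separate the contributions of $f \in \mathcal{A}$ fixing $\{\alpha, \beta\}$ pointwise from those swapping $\alpha$ and $\beta$, and align the parity analysis of \lem{magiclemma} with the ordered/unordered bookkeeping. The characteristic-$2$ reductions in the stabiliser analysis also require a slightly tedious case check, but they are mechanical once the map $\iota$ is in hand.
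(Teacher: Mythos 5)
Your overall strategy coincides with the paper's: the two commuting involutions (the coordinate swap $\sigma$ and the diagonal action of $\iota$) give orbits of size $4$ on $\mathcal{B}$ except for the $\sigma\iota$-fixed bases $(\tilde{\beta},\iota\tilde{\beta})$, whose count reduces via \lem{magiclemma} to the ordered-pair condition in the statement. The paper organises this slightly differently --- it first splits off the bases both of whose entries meet $\tilde{\alpha}$ and shows that class is a multiple of $4$ by varying the choices $t_0,t_1\in\{0,\infty\}$, then applies the involutions only to the remainder --- but your Type~I/Type~II split of the $\sigma\iota$-fixed points accomplishes the same bookkeeping.

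The genuine gap is your treatment of the case $|\mathcal{A}|$ even. Your characterisation of the $\sigma\iota$-fixed bases --- triviality of $H_{\tilde{\alpha},\tilde{\beta},\iota\tilde{\beta}}$ being equivalent to no non-trivial $f\in\mathcal{A}$ fixing the relevant points --- is only valid when $|\mathcal{A}|$ is odd: the stabiliser computation delivers the condition that $f^2$ fixes $(\alpha,\beta)$, and one deduces $f=\mathrm{Id}$ only because squaring is injective on a group of odd order. When $|\mathcal{A}|$ is even, an involutory $f$ interchanging $\alpha$ and $\beta$ can lie in the stabiliser without violating your pointwise condition, so the quantity you feed into \lem{magiclemma} is not the number of size-$2$ orbits; moreover the asserted ``automatic cancellation'' for even $|\mathcal{A}|$ is not an instance of that lemma, whose conclusion depends on the parity of $|X|$ rather than on individual terms being even. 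The paper sidesteps all of this by dispatching the even case at the outset: an involution $f_2\in\mathcal{A}$ gives $\langle\iota,f_2\rangle\cong\cyc{2}\times\cyc{2}\le H_{\tilde{\alpha}}$, which acts semiregularly on ordered bases, so $4$ divides $|\mathcal{B}|$ immediately, and one may assume $|\mathcal{A}|$ odd thereafter; you should adopt that reduction. A smaller point: your argument that $N_{\mathrm{I}}\equiv 0\bmod 4$ discards the fixed point $b=1$ of $b\mapsto b^{-1}$ because $1$ is fixed by all of $\mathcal{A}$, but when $\mathcal{A}=1$ the condition ``not fixed by any non-trivial $f$'' is vacuously true and $b=1$ would contribute $2$. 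That configuration is in fact impossible for a different reason --- if $\mathcal{A}=1$ and some $\{0,\ell\}\in S$ with $\ell\in\mathbb{F}_q^{\times}$, its stabiliser in $H_{\tilde{\alpha}}$ is the pointwise stabiliser of $0,\infty,\ell$, which is trivial, contradicting $b(H_{\tilde{\alpha}})=2$ --- but this needs to be said.
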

            \begin{proof}[Proof of claim]
            Certainly, if  $|\mathcal{A}|$ is even, then
            $H_{\tilde{\alpha}}$ has a subgroup $\langle \iota, f_2 \rangle
                \cong \cyc{2}^2$ containing an involutory automorphism $f_2$,
            so that $| H_{\tilde{\alpha}} |$ and hence (by its semiregular action)
            $|\mathcal{B}|$ are divisible by 4; we may thus assume
            $|\mathcal{A}|$ is odd. \\

            The elements of $\mathcal{B}$ then fall into two disjoint classes. One consists
            of those bases $(\tilde{\beta}, \tilde{\gamma})$ such that
            $\tilde{\alpha} \cap \tilde{\beta}, \tilde{\alpha} \cap \tilde{\gamma} \neq \emptyset$.
            There are then unique expressions $\{ t_0, \ell \}$ and $\{ t_1, m \}$
            for $\tilde{\beta}, \tilde{\gamma}$ in which $t_0, t_1 \in \tilde{\alpha}$ and
            $\ell, m \in \mathbb{F}_q^{\times}$, lest $\tilde{\alpha}$ appear
            in an irredundant base for its stabiliser. It follows that the stabiliser of
            $(\tilde{\beta}, \tilde{\gamma})$ in $H_{\tilde{\alpha}}$ is the stabiliser of
            the individual projective points $0$, $\infty$, $\ell$, and $m$, so that
            there are either 0 or $2^2 = 4$ bases of this form in $\mathcal{B}$
            for any pair of projective points $\ell, m$ (corresponding to the choices of
            $t_0, t_1 \in \{ 0, \infty \}$). The cardinality of this first class is
            thus a multiple of 4. \\

            As for the remaining bases in $\mathcal{B}$, consider
            the permutations induced by interchanging the elements in a base
            and by applying the transformation $\imath \in H_{\tilde{\alpha}}$
            to projective points. These actions are commuting involutions,
            and clearly leave the set of $(\tilde{\beta}, \tilde{\gamma}) \in \mathcal{B}$
            for which $\tilde{\alpha} \cap \tilde{\beta}, \tilde{\alpha} \cap
                \tilde{\gamma} \neq \emptyset$
            does \textsl{not} hold invariant. The fact that the elements of
            $\mathcal{B}$ are irredundant bases further ensures both actions
            are fixed-point free, guaranteeing that the group they generate
            has orbits of size $2^2 = 4$ on all bases except those fixed
            under the composition of the two actions; namely, bases of the form
            $( \{ \alpha, \beta \}, \{ \alpha^{-1}, \beta^{-1} \} )$
            for some $\alpha, \beta \in \mathbb{F}_q^{\times}$, where the second pair
            is the image of the first under $\iota$. \\

            Now, such a base clearly cannot have $\beta = \alpha$ (singletons),
            $\beta = \alpha^{-1}$ (identical pairs), or $\{ \alpha, \beta \}$
            fixed pointwise by any non-trivial $f \in \mathcal{A}$. Conversely,
            if none of these three conditions hold, then it will be shown that
            no $1\neq h \in H_{\tilde{\alpha}}$ leaves
            $( \{ \alpha, \beta \}, \{ \alpha^{-1}, \beta^{-1} \} )$
            invariant, which is therefore an irredundant base in light of
            the fact that
            $b(H_{\alpha}) = 2$. Indeed, any $h \in H_{\alpha}$ can be written
            as $h = \iota^j M_x f$ for suitable $j \in \{ 0, 1 \}$,
            $M_x = \left( \begin{array}{ll} 1 & 0 \\ 0 & x \end{array} \right) \in
                \tilde{H}$, and $f \in \mathcal{A}$.
            Then $h$ leaves the given unordered pairs invariant exactly when there exist
           $\pi_+, \pi_- \in \Sym{(\{\alpha, \beta\})}$ satisfying

            \begin{minipage}{0.45\textwidth}
                \begin{align}
                    \pi_+ (\alpha) & = h(\alpha) = (x f(\alpha))^{(-1)^j} \\
                    \pi_+ (\beta) & = h(\beta) = (x f(\beta))^{(-1)^j}
                \end{align}
            \end{minipage}
            \begin{minipage}{0.45\textwidth}
                \begin{align}
                    \pi_- (\alpha)^{-1} & = h(\alpha^{-1}) = (x f(\alpha^{-1}))^{(-1)^j} \\
                    \pi_- (\beta)^{-1} & = h(\beta^{-1}) = (x f(\beta^{-1}))^{(-1)^j}
                \end{align}
            \end{minipage}

            Taking the products of (1), (3) and (2), (4) establishes that 
            $\beta \alpha^{-1} = x^{2 (-1)^j} = \alpha \beta^{-1}
                = (\beta \alpha^{-1})^{-1}$ if $\pi_+ \neq \pi_-$
            and $x^{2 (-1)^j} = 1$ otherwise. Since $\mathbb{F}_q$
            has characteristic 2, however, these criteria imply
            $\beta = \beta \alpha^{-1} \alpha = \alpha$ or $x = 1$,
            respectively; the former possibility is prohibited
            by assumption. With the observation that
            $\pi_+ \in \Sym{(\{\alpha, \beta\})}$ has order dividing 2,
            it follows from (1), (2) that
            \begin{align*} \alpha = \pi_+^2 (\alpha) = (x f(\pi^+(\alpha)))^{(-1)^j} &=
                (x f((x f(\alpha))^{(-1)^j}))^{(-1)^j}\\
                &=
                f^2 (\alpha^{(-1)^j (-1)^j}) = f^2 (\alpha)
            \end{align*}

            and similarly for $\beta$. Since $(\alpha, \beta)$ is fixed
            by no non-trivial $f^2$ in the group $\mathcal{A}$ of odd order,
            we conclude $f = {\rm Id}$, which on substitution into (1), (2) becomes
            \[ \pi_+ (\alpha) = (x f(\alpha))^{(-1)^j} = \alpha^{(-1)^j} \qquad {\rm and}
            \qquad \pi_+ (\beta) = (x f(\beta))^{(-1)^j} = \beta^{(-1)^j} \]

            The possibility $\pi_+ \neq {\rm Id}$ is ruled out by the requirement that
            $\beta \notin \{ \alpha, \alpha^{-1} \}$, while $\pi_+ = {\rm Id}$ and $j = 1$
            yields $\alpha = \alpha^{-1}$, $\beta = \beta^{-1}$, and therefore
            another contradiction $\alpha = 1 = \beta$. We thus have
            $h = \iota^0 M_1 \cdot {\rm Id} = 1$ as claimed. Noting that
            $\alpha, \beta$ and $\beta, \alpha$ give rise to the same base
            in $\mathcal{B}$, it is in turn immediate that 4 divides $| \mathcal{B} |$
            exactly when 8 divides the number of pairs $(\alpha, \beta)$ which satisfy
            the stated properties. 
            \end{proof}
            For $\pgl{2}{q} < G \le \pgammal{2}{q}$ ($q = 2^e$ even)
            with a point stabiliser of type $\ling{1}{q} \wr \sym{2}$,
            one may identify $\Omega$ with the set of distinct pairs of
            projective points. Applying \claim{clam_1} with $H = G$ and
            $S = \Omega$ thus reduces the determination of whether
            the valency $| \mathcal{B} |/2$ is even to calculating
            the parity of $| G \cap \Aut{(\mathbb{F}_q)}|$ and
            the number of pairs of points $\alpha$ and
            $\beta \neq \alpha, \alpha^{-1} \in \mathbb{F}_q^{\times}$
            not fixed by a non-trivial field automorphism in $G$.
            The field elements $\alpha \in \mathbb{F}_q$ which
            \textsl{are} fixed by any
            $S \subseteq \Aut{(\mathbb{F}_q)} \backslash \{ {\rm Id} \}$,
            however, are precisely those in the field
            $\mathbb{F}_Q \le \mathbb{F}_q$ fixed pointwise
            by a generator of $\langle S \rangle$; there are thus
            $(Q - 1)(Q - 1 - 2) + 1 = (Q - 2)^2$ pairs of non-zero $\alpha$
            and $\beta \neq \alpha, \alpha^{-1}$ fixed by such $S$. (The added 1
            accounts for the case $\alpha = \alpha^{-1} = 1$.) Since $Q > 1$
            is a power of 2, this is congruent to 0 modulo 8 if $Q = 2$ and
            4 otherwise, so that by \lem{magiclemma} the number of pairs
            $(\alpha, \beta)$ is a multiple of 8 exactly when
            $(G \cap \Aut{(\mathbb{F}_q)}) \backslash \{ {\rm Id} \}$
            has an even number of subsets $S$ for which $Q = 2 = 2^1$. \\

            These subsets are, in fact, those which generate
            $\Aut{(\mathbb{F}_q)} \cong \cyc{e}$: the fixed field of
            $S \subseteq \Aut{(\mathbb{F}_q)}$, or (equivalently) of
            the group $\langle S \rangle$, has size
            $q^{1/|\langle S \rangle|} = 2^{e/|\langle S \rangle|}$.
            It follows that there
            are none unless $\Aut{(\mathbb{F}_q)} \le G$, in which case
            the generating subsets are precisely those not contained
            in the unique index-$p$ subgroup of $\Aut{(\mathbb{F}_q)}$
            for any prime divisor $p$ of $e$ (that is, any maximal subgroup).
            The intersection of the index $p_i, \ldots , p_j$ subgroups has
            $2^{e/(p_i \ldots p_j) - 1}$ subsets not containing $\rm Id$.
            The Inclusion-Exclusion Principle hence tallies
            \[ \sum_{H \subseteq \{ p \textrm{ prime} \, : \, p \mid E \}}
                2^{e/(\prod_{p \in H} p) - 1} (-1)^{|H|} \]
            subsets $S$ of interest, a sum composed entirely of even terms
            except in the case of a unique term for which $e = \prod_{p \in H} p$.
            Overall, $\saxh$ has even valency unless $G \ge \Aut{(\mathbb{F}_q)}$
            and
            $e$ is an odd product of distinct primes. This corresponds
            to the stated exception $G = \pgammal{2}{2^e}$
            for odd square-free $e$. \\

            Finally, consider the case where $\pgl{2}{q} \le G \le \pgammal{2}{q}$
            ($q$ again even) has point stabilisers of type $\ling{1}{q^2}$.
            We follow the proof of \cite[Lemma~4.8]{simpsoltab} in identifying
            a point stabiliser $G_0$ thereof with a group
            $\psu{2}{q} \le R \le \psigmau{2}{q}$ and $\Omega$ with the set
            \[ \{ \{ 0, \infty \} \} \cup
                \{ \{ \alpha, -\alpha^q = \alpha^q \} \, : \,
                \alpha \in \mathbb{F}_{q^2}^{\times}, \, \alpha^{q+1} \neq 1 \} \]
            of orthogonal pairs of non-degenerate 1-dimensional subspaces of
            the natural module $\vsp{2}{q^2}$ of $R$. (Observe $-1 = 1$
            in $\mathbb{F}_{q^2} \ge \mathbb{F}_2$.) Noting that this configurations
            satisfies $R_{\tilde{\alpha}} = R \cong G_0$, we may then take
            $H = R$ and $S = \Omega$ in \claim{clam_1} to conclude that the valency of
            $\saxh$ is even if and only if
            $\mathcal{A} = R \cap \Aut{(\mathbb{F}_{q^2})}$
            has even size or the number of $\alpha \in \mathbb{F}_q^{\times}$
            such that $\alpha$ and $\alpha^q \neq \alpha, \alpha^{-1}$ are
            not simultaneously fixed by any automorphism
            in $\mathcal{A} \backslash \{ {\rm Id} \}$ is a multiple of 8.
            Since any automorphism which fixes $\alpha$ also fixes $\alpha^q$,
            it suffices to assume $| \mathcal{A} |$ odd and test
            the $\alpha \in \mathbb{F}_{q^2}^{\times}$
            for which $\alpha^{q \pm 1} \neq 1$. \\

            As such, take arbitrary
            $S \leq \mathcal{A}$.
            The elements $\alpha \in \mathbb{F}_{q^2}$ fixed by $S$
            are precisely those in the subfield $\mathbb{F}_Q \le
                \mathbb{F}_{q^2}$ where $Q = q^{2/|S|}$; given
            that $S$ has odd order,
            $Q$ can in fact be written as $Q_0^2$ where  $Q_0=q^{1/|S|}$. There are thus $Q_0^2 - 1$
            values of $\alpha \in \mathbb{F}_{q^2}^{\times}$ fixed
            by $S$. Of these, $(Q_0^2 - 1, q - 1) = Q_0 - 1$
            are solutions of $\alpha^{q - 1} = 1$, $(Q_0^2 - 1, q + 1) = Q_0 + 1$
            are solutions of $\alpha^{q + 1} = 1$, and $(Q_0^2 - 1, q - 1, q + 1) = 1$
            satisfy both. It follows that there are
            \[ Q_0^2 - 1 - (Q_0 - 1) - (Q_0 + 1) + 1 = (Q_0 - 1)^2 - 1 \equiv 0 \textrm{ mod } 8 \]
            field elements $\alpha \in \mathbb{F}_q^{\times}$ fixed
            by $S$ which are not solutions of
            either equation. Divisibility by 8 of
            the number of $\alpha \in \mathbb{F}_{q^2}^{\times}$ for which
            $\alpha^{q \pm 1} \neq 1$ and \textsl{no} non-trivial automorphism
            in $\mathcal{A}$ fixes $\alpha$ is consequently implied
            by \lem{magiclemma} (applied to the non-identity elements of $\mathcal{A}$), yielding even valency of $\saxh$
            to complete the proof.

    \end{proof}
    We now conclude the proof of \thm{flag-span}.
    
    \begin{proof}[Proof of \thm{flag-span}]
            By \cite{bah_shan}, $\saxh$ has a flag-spanning tour if and
            only if both its number of vertices $|\Omega|$ and its valency
            $d$ are even. Cases where the former does not hold are
            accounted for by \ref{flag-span:odd}; it thus suffices
            to consider $\saxh$ with odd valency $d$. \\

            Now, $( b(G) - 1 )! d = n \left| G_\alpha \right|$
            is not divisible by 4 for odd $d$ and $b(G) \le 4$.
            The point stabilisers in this scenario 
            % \antt{Odd-order theorem;
            % if $\left| G_\alpha \right| \equiv 2 \textrm{ mod } 4$,
            % the even permutations in the right regular representation form
            % an index 2 subgroup.}{
            must therefore be soluble by the Odd Order Theorem.
            On the other hand, the O'Nan--Scott Theorem classifies
            primitive permutation groups into just 5 classes.
            The point stabilisers in those of twisted wreath and diagonal
            type all have non-abelian composition factors, limiting
            the possibilities for soluble point stabilisers to groups of
            product, almost simple, and affine type. In the latter case,
            the fact that the elementary abelian socle $N$ has even order
            $|N| = |\Omega|$ and the point stabiliser (i.e.\ factor group
            modulo $N$) is soluble ensures case \ref{flag-span:solv} applies.
            This leaves only the former two types to consider. \\

            In the case of a group $G$ of product type, let
            $L$, $J$, and $P \le \sym{k}$ denote the base group,
            point stabiliser of the base group, and top group,
            respectively. Write $T^n$ for the socle of $L$.
            Any point stabiliser in $G$ then has a subgroup isomorphic
            to $(J \cap T^n)^k$ and epimorphic image $P$, so that $\saxh$ has
            a flag-spanning tour if $|J \cap T^n|$ is even or $|K|$
            divisible by 4. This yields the restrictions on $K$
            in~\ref{flag-span:prod}, along with the requirement that $L$ be
            almost simple --- $L$ of diagonal type would have $J \cap T^n$
            containing a non-soluble factor $T$. Moreover, since
            $|J \cap T^n|$ is odd, it follows from the Odd Order Theorem and
            Schreier Conjecture that $J \unrhd J \cap T^n$ is soluble. \\

            The possibilities for $L$ and almost simple $G$ without
            a flag-spanning tour can now be
            obtained by simple enumeration. Burness
            \cite[Tables 9.4--7]{simpsoltab} tabulates
            the soluble maximal subgroups of almost simple groups
            with base size at least 3, while Burness and Huang
            \cite[Table~7.4]{prodsoltab} list the almost simple $L$
            with soluble maximal subgroups such that $b(L) < 3$,
            but $b(L \wr P) = 3$ for some $P$. % (Note $b(L \wr P) \ge b(G)$ for $G \le L \wr P$.)
            We undertake a case-by-case analysis of
            the orders of these maximal subgroups, referring to Kleidman
            and Liebeck \cite{k+l} and Bray \etal\cite{bhrd} as appropriate
            to determine the exact structures of subgroups of
            classical groups, in order to reduce the possibilities
            to those in \tbl[nofstbas]{nofstsimp}.
            The cases in \tbl{nofstsimp} are subsequently reduced
            to cases~\ref{flag-span:lin1} and~\ref{flag-span:lin2} above
            via \lem{flag-span-simp}. The result follows.
    \end{proof}

    \begin{remark}
        \label{rmk:fstka}
        Note that the possibility of a flag-spanning tour is left open
        in cases~\ref{flag-span:solv} and~\ref{flag-span:prod} of
        \thm{flag-span}. In general, the need to enumerate bases makes
        the task of proving odd valency significantly harder
        than that of proving even valency; we record only a few remarks
        in that direction here.
        \begin{itemize}
            \item All open possibilities have even degree.
            \item The two groups $\vsp{2}{6} {:} \dih{18}$ and
                $\vsp{2}{6} {:} (\cyc{7}{:}\cyc{6})$ both have
                base size 3 and satisfy
                criterion~\ref{flag-span:solv} above, yet
                only the Saxl hypergraph of the former has
                a flag-spanning tour. It remains unclear ---
                even in the case $b(G) = 2$ (see \cite{burngiu}) ---
                how to distinguish these possibilities
                for soluble groups.
            \item Burness and Huang \cite{prodsoltab} resolve
                the related question of Eulerian circuits
                for product type groups of the form $G = L \wr P$
                in the case $b(G) = 2$ (their Proposition~5.1).
                Much of their paper is equally relevant
                to the case $b(G) \in \{ 3, 4 \}$.
        \end{itemize}
    \end{remark}
\iffalse
    \section{\label{sec:other}Other results}

    Hamiltonian circuits and clique numbers have natural and
    meaningful generalisations to hypergraphs. Especially useful is
    the fact that extensions of Dirac's theorem on Hamiltonian cycles
    to hypergraphs form a well-researched topic (see \cite{70y})
    so that Lemma 3.6 of Burness and Giudici can likely be at least partly extended
    to hypergraphs. \\

    There is a rich theory of hypergraph colourings, but given even Saxl graph colourings
    have received almost no attention, it would be apt to consider them first. As a very basic
    first step in this direction, we note that the Common Neighbour Conjecture would
    imply $3 \le \chi(\saxg) \le r$ for any non-Frobenius primitive group $G$
    of base size 2, where $r$ is
    the valency of $\saxg$: the lower bound comes from the fact that neighbouring vertices
    in $\saxg$ also have a neighbour in common, while the upper bound comes from Brook's Theorem
    and the observation that $\saxg$ is only complete or an odd-length cycle --- which,
    unless the complete graph on 3 vertices, contains vertices with no common neighbour ---
    for the Saxl graph of a Frobenius group.
\fi
    % Bibliography
    \bibliographystyle{abbrv}
    \bibliography{saxh.bib}

\begin{thebibliography}{10}

\bibitem{bah_shan}
A.~Bahmanian and S.~Shan.
\newblock Spanning euler tours in hypergraphs, 2024.
\newblock arXiv:2403.12713.

\bibitem{prodbase}
R.~F. Bailey and P.~J. Cameron.
\newblock Base size, metric dimension and other invariants of groups and
  graphs.
\newblock {\em Bulletin of the London Mathematical Society}, 43(2):209--242,
  2011.

\bibitem{bhrd}
J.~N. Bray, D.~F. Holt, and C.~M. Roney-Dougal.
\newblock {\em The Maximal Subgroups of the Low-Dimensional Finite Classical
  Groups}.
\newblock Cambridge University Press, 2013.

\bibitem{bretto}
A.~Bretto.
\newblock {\em Hypergraph Theory: An Introduction}.
\newblock Mathematical Engineering. Springer International Publishing, 2013.

\bibitem{basecls}
T.~C. Burness.
\newblock On base sizes for actions of finite classical groups.
\newblock {\em Journal of the London Mathematical Society. Second Series},
  75(3):545--562, 2007.

\bibitem{fpr_survey}
T.~C. Burness.
\newblock Simple groups, fixed point ratios and applications.
\newblock In {\em Local representation theory and simple groups. Extended
  versions of short lecture courses given during a semester programme on
  ``Local representation theory and simple groups'' held at the Centre
  Interfacultaire Bernoulli of the EPF Lausanne, Switzerland, 2016}, pages
  267--322. Z\"urich: European Mathematical Society (EMS), 2018.

\bibitem{simpsoltab}
T.~C. Burness.
\newblock Base sizes for primitive groups with soluble stabilisers.
\newblock {\em Algebra \& Number Theory}, 15(7):1755--1807, 2021.

\bibitem{burngiu}
T.~C. Burness and M.~Giudici.
\newblock On the saxl graph of a permutation group.
\newblock {\em Mathematical Proceedings of the Cambridge Philosophical
  Society}, 168(2):219--248, 2018.

\bibitem{basesym}
T.~C. Burness, R.~M. Guralnick, and J.~Saxl.
\newblock On base sizes for symmetric groups.
\newblock {\em Bulletin of the London Mathematical Society}, 43(2):386--391,
  2011.

\bibitem{saxlsol}
T.~C. Burness and H.~Y. Huang.
\newblock On the {Saxl} graphs of primitive groups with soluble stabilisers.
\newblock {\em Algebraic Combinatorics}, 5(5):1053--1087, 2022.

\bibitem{prodsoltab}
T.~C. Burness and H.~Y. Huang.
\newblock On base sizes for primitive groups of product type.
\newblock {\em Journal of Pure and Applied Algebra}, 227(3):107228, 2023.

\bibitem{basesim}
T.~C. Burness, M.~W. Liebeck, and A.~Shalev.
\newblock Base sizes for simple groups and a conjecture of cameron.
\newblock {\em Proceedings of the London Mathematical Society}, 98(1):116--162,
  2008.

\bibitem{basespor}
T.~C. Burness, E.~A. O'Brien, and R.~A. Wilson.
\newblock Base sizes for sporadic simple groups.
\newblock {\em Israel Journal of Mathematics}, 177:307--333, 2010.

\bibitem{Caceres}
J.~C\'aceres, D.~Garijo, A.~Gonz\'alez, A.~M\'arquez, and M.~L. Puertas.
\newblock The determining number of {Kneser} graphs.
\newblock {\em Discrete Mathematics and Theoretical Computer Science. DMTCS},
  15(1):1--14, 2013.

\bibitem{camkant}
P.~J. Cameron and W.~M. Kantor.
\newblock Random permutations: {Some} group-theoretic aspects.
\newblock {\em Combinatorics, Probability and Computing}, 2(3):257--262, 1993.

\bibitem{chen+du}
H.~Chen and S.~Du.
\newblock On the {Burness}--{Giudici} conjecture.
\newblock {\em Communications in Algebra}, 51(12):5019--5045, 2023.

\bibitem{chen+huang}
J.~Chen and H.~Y. Huang.
\newblock On valency problems of {Saxl} graphs.
\newblock {\em Journal of Group Theory}, 25(3):543--577, 2022.

\bibitem{altsubbase}
C.~del Valle and C.~M. Roney-Dougal.
\newblock The base size of the symmetric group acting on subsets.
\newblock {\em Algebraic Combinatorics}, 7(4):959--967, 2024.

\bibitem{perm_grp}
J.~D. Dixon and B.~Mortimer.
\newblock {\em Permutation Groups}.
\newblock Springer New York, 1996.

\bibitem{fhlk}
S.~D. Freedman, H.~Y. Huang, M.~Lee, and K.~Rekv\'enyi.
\newblock On the generalised saxl graphs of permutation groups, 2024.
\newblock arXiv:2410.22613.

\bibitem{gls}
N.~Gill, B.~Lod\`a, and P.~Spiga.
\newblock On the height and relational complexity of a finite permutation
  group.
\newblock {\em Nagoya Mathematical Journal}, 246:372--411, 2022.

\bibitem{Halasi}
Z.~Halasi.
\newblock On the base size for the symmetric group acting on subsets.
\newblock {\em Studia Scientiarum Mathematicarum Hungarica}, 49(4):492--500,
  2012.

\bibitem{pyber_exp}
Z.~Halasi, M.~W. Liebeck, and A.~Mar\'oti.
\newblock Base sizes of primitive groups: Bounds with explicit constants.
\newblock {\em Journal of Algebra}, 521:16--43, 2019.

\bibitem{diagbase}
H.~Y. Huang.
\newblock Base sizes of primitive groups of diagonal type.
\newblock {\em Forum of Mathematics, Sigma}, 12, 2023.

\bibitem{primgrp}
A.~Hulpke, O.~Konovalov, C.~M. Roney-Dougal, and C.~Russell.
\newblock {PrimGrp}, gap primitive permutation groups library, {V}ersion 3.4.3.
\newblock \url{https://gap-packages.github.io/primgrp/}, 2022.
\newblock GAP package.

\bibitem{kantor}
W.~M. Kantor.
\newblock Homogeneous designs and geometric lattices.
\newblock {\em J. Combin. Theory Ser. A}, 38(1):66--74, 1985.

\bibitem{k+l}
P.~B. Kleidman and M.~W. Liebeck.
\newblock {\em The Subgroup Structure of the Finite Classical Groups}.
\newblock Cambridge University Press, 1990.

\bibitem{lee+pop}
M.~Lee and T.~Popiel.
\newblock Saxl graphs of primitive affine groups with sporadic point
  stabilizers.
\newblock {\em International Journal of Algebra and Computation},
  33(2):369--389, 2023.

\bibitem{odd_prim}
M.~W. Liebeck and J.~Saxl.
\newblock The primitive permutation groups of odd degree.
\newblock {\em Journal of the London Mathematical Society}, s2-31(2):250--264,
  1985.

\bibitem{liesha}
M.~W. Liebeck and A.~Shalev.
\newblock Bases of primitive permutation groups.
\newblock In {\em Groups, combinatorics and geometry. Proceedings of the L. M.
  S. Durham symposium, Durham, UK, July 16--26, 2001}, pages 147--154. River
  Edge, NJ: World Scientific, 2003.

\bibitem{mans_serr}
S.~P. Mansilla and O.~Serra.
\newblock On $s$-arc transitive hypergraphs.
\newblock {\em European Journal of Combinatorics}, 29(4):1003--1011, 2008.

\bibitem{smlbasegrp}
M.~Moscatiello and C.~M. Roney-Dougal.
\newblock Base sizes of primitive permutation groups.
\newblock {\em Monatshefte f\"ur Mathematik}, 198(2):411--443, 2022.

\bibitem{hyper_rev}
X.~Ouvrard.
\newblock Hypergraphs: an introduction and review, 2020.
\newblock arXiv:2002.05014.

\bibitem{pyber_asym}
L.~Pyber.
\newblock Asymptotic results for simple groups and some applications.
\newblock In {\em Groups and computation II. Workshop on groups and
  computation, June 7--10, 1995, New Brunswick, NJ, USA}, pages 309--327.
  Providence, RI: American Mathematical Society, 1997.

\bibitem{dist_no2}
A.~Seress.
\newblock The minimal base size of primitive solvable permutation groups.
\newblock {\em Journal of the London Mathematical Society. Second Series},
  53(2):243--255, 1996.

\bibitem{dist_no1}
A.~Seress.
\newblock Primitive groups with no regular orbits on the set of subsets.
\newblock {\em Bulletin of the London Mathematical Society}, 29(6):1, 1997.

\bibitem{akperm}
A.~Seress.
\newblock {\em Permutation Group Algorithms}.
\newblock Cambridge University Press, 2003.

\end{thebibliography}
\end{document}